\newcommand{\beqa}{\begin{eqnarray}}
\newcommand{\eeqa}{\end{eqnarray}}
\newcommand{\beqas}{\begin{eqnarray*}}
\newcommand{\eeqas}{\end{eqnarray*}}
\newcommand{\ba}{\begin{array}}
\newcommand{\ea}{\end{array}}
\newcommand{\bi}{\begin{itemize}}
\newcommand{\ei}{\end{itemize}}
\newcommand{\RN}[1]{%
  \textup{\uppercase\expandafter{\romannumeral#1}}%
}
\DeclareMathOperator{\argmax}{\arg\max}
\newtheorem{lemma}{Lemma}
\newtheorem{thm}{Theorem}
\newtheorem{coro}{Corollary}
\newtheorem{prop}{Proposition}
\newtheorem{assumption}{Assumption}
\newtheorem{fact}{Fact}
\newtheorem{remark}{Remark}
\newcounter{spb}
\newcommand{\blk}{\mathrm{BlkDiag}}
\newcommand{\email}[1]{\protect\href{mailto:#1}{#1}}
\def\bmw{\bm{W}}
\def\bU{\bm{U}}
\def\bV{\bm{V}}
\def\bW{\bm{W}}
\def\R{\mathbb{R}}
\begin{document}

\title{Error Bound Analysis for the Regularized Loss of \\ Deep Linear Neural Networks}
\author{ 
Po Chen\thanks{School of Data Science, Fudan University, Shanghai (\email{chenp24@m.fudan.edu.cn}).}
\and Rujun Jiang\thanks{School of Data Science, Fudan University, Shanghai (\email{rjjiang@fudan.edu.cn}).}
\and Peng Wang\thanks{Corresponding author. Department of Computer and Information Science, University of Macau, China \\
Department of Electrical Engineering and Computer Science, University of Michigan, Ann Arbor, US (\email{peng8wang@gmail.com}).}
}
\date{\today}
\maketitle

\abstract{%
The optimization foundations of deep linear networks have recently received significant attention. However, due to their inherent non-convexity and hierarchical structure, analyzing the loss functions of deep linear networks remains a challenging task. In this work, we study the local geometric landscape of the regularized squared loss of deep linear networks around each critical point. Specifically, we derive a closed-form characterization of the critical point set and establish an error bound for the regularized loss under mild conditions on network width and regularization parameters. Notably, this error bound quantifies the distance from a point to the critical point set  in terms of the current gradient norm, which can be used to derive linear convergence of first-order methods. 
To support our theoretical findings, we conduct numerical experiments and demonstrate that gradient descent converges linearly to a critical point when optimizing the regularized loss of deep linear networks.  
}%



\medskip 
{\bf Keywords:} Deep linear networks, critical points, error bounds, linear convergence
\smallskip

\section{Introduction}\label{sec:intro}

Deep learning has achieved great empirical success in various fields, including computer vision \cite{he2016deep}, natural language processing \cite{vaswani2017attention}, and healthcare \cite{esteva2019guide}. Optimization is a key component of deep learning, playing a pivotal role in formulating learning objectives and training neural networks. In general, optimization problems arising in deep learning are highly non-convex and difficult to analyze due to the inherent nonlinearity and hierarchical structures of deep neural networks. Even in the context of linear neural networks, which are the most basic form of neural networks, our theoretical understanding remains far from complete and systematic.  In this work, we study the following problem based on deep linear networks: 
\begin{align}\label{eq:F}
\min_{\bm W}\ F(\bm W) := \left\|\bm W_L\cdots\bm W_1 -  \bm Y \right\|_F^2 + \sum_{l=1}^L \lambda_l \|\bm W_l\|_F^2, 
\end{align}
where $L\ge 2$ denotes the number of layers, $ \bm Y \in \R^{d_L\times d_0}$ denotes the data input, $\bm W_l \in \R^{d_l \times d_{l-1}}$ denotes the $l$-th weight matrix, $\bm W = \{\bm W_l\}_{l=1}^L$ denotes the collection of all weight matrices, and $\lambda_l > 0$ for all $l$ are regularization parameters. Notably, such a problem captures a wide range of deep learning problems, including deep matrix factorization \cite{arora2019implicit,de2021survey,trigeorgis2016deep}, neural collapse \cite{Han2021,zhou2022optimization}, and low-rank adaptation of learning models \cite{hu2022lora,yarascompressible}. 

In the literature, a large amount of work has been dedicated to studying the following problem: $$\min_{\bm W} \left\|\bm W_L\cdots\bm W_1\bm X -  \bm Z \right\|_F^2,$$ 
where $(\bm X, \bm Z) \in \R^{d_0\times N} \times \R^{d_L\times N}$ denotes the data input. A common assumption in many theoretical studies (see, e.g., \cite{arora2018convergence,bartlett2018gradient,domine2024lazy,wu2019global}) is that the data is whitened, i.e., $\bm X\bm X^T = \bm I_d$. Under this assumption, one can verify that $\min_{\bm W} \left\|\bm W_L\cdots\bm W_1\bm X -  \bm Z \right\|_F^2$ is equivalent to $\min_{\bm W} \left\|\bm W_L\cdots\bm W_1 -  \bm Z\bm X^T \right\|_F^2$. Obviously, this formulation is an unregularized counterpart of Problem \eqref{eq:F}. We should point out that regularization is commonly used during the training of neural networks to prevent overfitting, improve generalization, and accelerate convergence \cite{krizhevsky2012imagenet,zhang2021understanding}. However, the existing theoretical studies mainly focus on unregularized deep neural networks and cannot be directly applied to Problem \eqref{eq:F}, even when the data is whitened; see, e.g., \cite{arora2018convergence,bah2022learning,bartlett2018gradient,chitour2023geometric,Hu2020Provable,nguegnang2024convergence,wu2019global}. 

In practice, (stochastic) gradient descent (GD) and its variants are among the most widely used first-order methods for deep learning \cite{kingma2014adam}. Over the past few years, substantial progress has been made in studying the convergence behavior of GD for optimizing deep linear networks, particularly in the context of the unregularized problem $\min_{\bm W} \left\|\bm W_L\cdots\bm W_1 -  \bm Y\right\|_F^2$. For example, \citet{bartlett2018gradient} showed that GD with the identity initialization converges to an $\epsilon$-optimal solution within a polynomial number of iterations. Later, \citet{arora2018convergence} further improved the convergence result, showing that GD converges linearly to a global optimum when $\min\{d_1,\dots,d_{L-1}\} \ge \min\{d_0,d_L\}$, the initial weight matrices are approximately balanced, and the initial loss is smaller than a threshold. Meanwhile, \citet{wu2019global} showed that gradient descent with zero-asymmetric initialization avoids saddle points and converges to an $\epsilon$-optimal solution in $O(L^3\log(1/\epsilon))$ iterations. Other works also proved similar global convergence and convergence rate results of GD under different assumptions; see, e.g., \citep{bah2022learning,chitour2023geometric,Hu2020Provable,nguegnang2024convergence,wu2019global}. Despite these inspiring results, the existing analyses suffer from some notable limitations. First, they are highly specific to a particular problem, relying on the analysis of gradient dynamics under carefully designed weight initialization schemes. 
In addition, the existing analyses only apply to analyze the convergence to global optimal solutions. However, Problem \eqref{eq:F} and its unregularized counterpart may have other local solutions, to which first-order methods, such as GD, are likely to converge. To the best of our knowledge, the convergence behavior of first-order methods when they approach a critical point—whether a global minimum, a local minimum, or even a saddle point—remains an open question in the literature. To sum up, it remains an unsolved challenge in the literature to develop a unified framework to analyze the convergence behavior of first-order methods when solving Problem \eqref{eq:F}.

To address the above challenge, a promising approach is to study the local geometric structure of Problem \eqref{eq:F} associated with its objective function, such as the error bound condition \cite{liao2024error,rebjock2024fast,zhou2017unified}, the Polyak-\L ojasiewicz (P\L) inequality \cite{polyak1963gradient}, and quadratic growth \cite{drusvyatskiy2018error}. In this work, we mainly focus on the {\em error bound} condition associated with Problem \eqref{eq:F}. Formally, let $\mathcal{W}_F$ denote the set of critical points of Problem \eqref{eq:F}. We say that it possesses an error bound for $\mathcal{W}_F$ if there exist constants $\epsilon,\kappa > 0$ such that  for all $\bm W$ satisfying $\mathrm{dist}(\bm W, \mathcal{W}_F) \le \epsilon$, we have
\begin{align}\label{eq:eb}
\mathrm{dist}(\bm W, \mathcal{W}_F) \le \kappa\|\nabla F(\bm W)\|_F,
\end{align} 
where $\mathrm{dist}(\bm W, \mathcal{W}_F) := \min\{\|\bm W - \bm V\|_F: \bm V \in \mathcal{W}_F\}$ denotes the distance from $\bm W$ to the critical point set $\mathcal{W}_F$. Intuitively, the error bound inequality \eqref{eq:eb} requires the distance from a point to the set of critical points to be bounded by its gradient norm. A unified framework leveraging this condition, along with some algorithm-dependent conditions, has been widely studied to analyze linear convergence of first-order methods \cite{drusvyatskiy2018error,zhou2017unified} or superlinear convergence of second-order methods \cite{yue2019family} in convex optimization. \citet{luo1993} showed that even if the objective function is non-convex but smooth and satisfies the error bound, (projected) GD converges linearly to the solution set.
Moreover, \citet{li2018calculus,liao2024error,rebjock2024fast} demonstrated that the error bound is equivalent to other regularity conditions, such as the P\L\ inequality \cite{polyak1963gradient} and quadratic growth \cite{drusvyatskiy2018error} under mild conditions. These regularity conditions are widely used in the literature to prove linear convergence of GD for optimizing non-convex problems \cite{polyak1963gradient,luo1993,attouch2010proximal,frei2021proxy}. Notably, this type of convergence analysis framework is not limited to studying convergence to global optimal solutions. Instead, it can also be applied to local minima or even saddle points, as long as the error bound holds for the targeted solution sets. This makes such a framework particularly effective for analyzing non-convex optimization problems, whose landscape often contains various types of critical points. 

However, powerful as this approach may seem, proving error bounds for specific problems, particularly non-convex ones, is technically difficult, as it often requires a delicate analysis \cite{zhou2017unified,jiang2019novel,jiang2022holderian,Liu2019,wang2023linear,pang1997error}. 
As far as we know, proving the error bound or other regularity conditions for deep neural networks remains relatively underexplored. Recently, \citet{wang2022linear} made progress in this direction by establishing the error bound for the set of global optimal solutions to a special instance of Problem \eqref{eq:F}: $\min  \|\bm W_2\bm W_1  - \bm Y\|_F^2 + \lambda_1\|\bm W_1\|_F^2 +  \lambda_2\|\bm W_2\|_F^2$, where $\lambda_1,\lambda_2 >0$ are regularization parameters and $\bm Y$ is a membership matrix. Moreover, \citet{liu2025} and \citet{tao2021error} extended the analysis to the setting of general data input $\bm Y$. However, these analyses are limited to global optimal solutions of 2-layer linear networks. Extending these results to deeper networks and critical points remains an open problem.  


In view of the above discussion, our goal in this paper is to establish an error bound for the set of critical points of Problem \eqref{eq:F}. 
Our main contributions are twofold. First, we explicitly characterize the critical point set of Problem \eqref{eq:F}  for general data $\bm Y$ (see \Cref{thm:opti}), despite the non-convexity and hierarchical structure of the problem. This characterization serves as a foundation for establishing the error bound for Problem \eqref{eq:F}. Second, leveraging this explicit characterization, we show that all critical points of Problem \eqref{eq:F} satisfy the error bound (see \Cref{thm:eb}) under mild conditions on the network width and regularization parameters. More specifically, we identify the sufficient and necessary conditions on the relationship between the regularization parameters and the spectrum of the input data that ensure the error bound holds. Such a result is significant, as it expands the currently limited repertoire of non-convex problems for which the local loss geometry is well understood. Moreover, it is important to note that our work develops new techniques in the process of establishing the error bound to handle the repeated singular values in $\bm Y$ and the complicated structure of the critical point set. These techniques could be of independent interest. The established error bound can be used to establish other regularity conditions, such as the P\L\ inequality and quadratic growth. Building on the error bound of Problem \eqref{eq:F},  we demonstrate that first-order methods can achieve linear convergence to a critical point of Problem \eqref{eq:F}, provided that they satisfy certain algorithm-dependent properties. 
Finally, we conduct numerical experiments in deep linear networks and in more general settings and observe that gradient descent converges linearly to critical points. These results strongly support our theoretical findings. 
 
{\bf Notation.} Given a positive integer $n$, let $[n] := \{1,\dots,n\}$. Given a set of numbers $\{\lambda_1,\dots,\lambda_n\}$, we define $\lambda_{\min} := \min\{\lambda_i: i \in [n]\}$ and $\lambda_{\max} := \max\{\lambda_i: i \in [n]\}$. Given a vector $\bm a$, let $\|\bm a\|$ denote its Euclidean norm, $\|\bm a\|_0$ its $\ell_0$ norm (i.e., number of non-zero entries), $a_i$ the $i$-th entry, and $\mathrm{diag}(\bm a)$ the diagonal matrix with $\bm a$ on the diagonal. 
Given a matrix $\bm A$, let $\|\bm A\|$ denote the spectral norm, $\|\bm A\|_F$ the Frobenius norm, $a_{ij}$ the $(i,j)$-th element, and $\sigma_i(\bm A)$ the $i$-th largest singular value. We use $\bm 0_{m\times n}$ to denote an all-zero matrix of size $m\times n$, $\bm 0_m$ to denote all-zero vector of dimension $m$, and simply write $\bm 0$ when the dimension can be inferred from the context. We denote the $i$-th standard basis vector by $\bm e_i$. 
We simply write $\nabla_{\bm W_l} F(\bm W)$ as $\nabla_l F(\bm W)$. 

We use $\mathcal{O}^{n\times d}$ to denote the set of all $n\times d$ matrices that have orthonormal columns (in particular, $\mathcal{O}^{n}$ for $n \times n$ orthogonal matrices); $\mathcal{P}^{n}$ to denote the set of all $n\times n$ permutation matrices; $\mathrm{BlkD}(\bm X_1,\dots,\bm X_n)$ to denote the block diagonal matrix whose diagonal blocks are $\bm X_1,\dots,\bm X_n$. Given a matrix $\bm X\in \R^{m\times n}$ and a non-empty closed set $\mathcal{X} \subseteq \R^{m\times n}$, the Euclidean distance from $\bm X$ to $\cal X$ is defined as  $\mathrm{dist}(\bm X, \mathcal{X}) = \min_{\bm Z \in \mathcal{X}} \|\bm X - \bm Z\|_F$; the distance between $\mathcal{X}$ and a closed set $\mathcal{Y} \neq \emptyset$ is defined as $\mathrm{dist}(\mathcal{X},\mathcal{Y}) = \min_{\bm X \in \mathcal{X}, \bm Y \in \mathcal{Y}}\|\bm X - \bm Y\|_F$.  Let $\pi : [d] \rightarrow [d]$ denote a permutation of the elements in $[d]$ with the corresponding permuation matrix $\bm\Pi \in\mathcal{P}^{d}$ satisfying $\bm \Pi(i,j) = 1$ if $j = \pi(i)$ and $\bm \Pi(i,j) = 0$ otherwise for each $i \in [d]$. 
Given weight matrices $\{\bm W_l\}_{l=1}^L$ with $\bm W_l \in \R^{d_l\times d_{l-1}}$ for each $l \in [L]$, let $\bm W_{i:1} := \bm W_i\bm W_{i-1}\cdots\bm W_1$ for each $i=2,\dots,L$ and $\bm W_{L:i} := \bm W_L\bm W_{L-1}\cdots \bm W_i$ for each $i=1,\dots,L-1$. In particular, we define $\bm W_{0:1} := \bm I$ and $\bm W_{L:L+1} := \bm I$. For all $i \ge j+1$, let $\bm W_{i:j}=\bm W_{i}\bm W_{i-1}\cdots\bm W_j$ and  $\bm W_{i:j}=\bm W_i$ when $i=j$. All other notations are standard.   

{\bf Organization.}  In \Cref{sec:main}, we present the main results of this paper. In \Cref{sec:proof}, we present the proofs of the main results. We report experimental results in \Cref{sec:expe} and conclude the paper in \Cref{sec:con}.

\section{Main Results}\label{sec:main}
To begin, we define the set of critical points of Problem \eqref{eq:F} as follows: 
\begin{align}
\mathcal{W}_F := \left\{ \bm W = (\bm W_1, \dots, \bm W_L) : \nabla F(\bm W) = \bm 0 \right\}.
\notag
\end{align}
Let $\bm Y = \bm U_Y\bm \Sigma_Y \bm V_Y^T$
be an SVD of $\bm Y \in \R^{d_L\times d_0}$, where $\bm U_Y \in \mathcal{O}^{d_L}$ and $\bm V_Y \in \mathcal{O}^{d_0}$. In addition, 
\begin{align}\label{eq:SY}
    \bm \Sigma_Y = \mathrm{BlkD}\left(\mathrm{diag}(y_1,y_2,\dots,y_{d_{\min}}), \bm 0_{(d_L-d_{\min})\times (d_0-d_{\min})}\right) \in \R^{d_L\times d_0},
\end{align}
    where $d_{\min} := \min\{d_0,d_L\}$ and $y_1\ge y_2 \ge \dots \ge y_{d_{\rm min}} \ge 0$ are singular values. Here, if $d_{\min} = d_0$ (resp., $d_{\min} =d_L$), the lower diagonal block vanishes, leaving a $\bm 0$ block of the appropriate size at the bottom (resp., on the right). Specifically, if $d_{\min} = d_0$, $\bm \Sigma_Y = \begin{bmatrix}
    \mathrm{diag}(y_1,y_2,\dots,y_{d_{\min}}) \\ \bm 0_{(d_L-d_{\min})\times d_0}  
\end{bmatrix}$; otherwise, $\bm \Sigma_Y = \begin{bmatrix}
    \mathrm{diag}(y_1,y_2,\dots,y_{d_{\min}}) & \bm 0_{d_L\times (d_0-d_{\min})}
\end{bmatrix}$. 
To handle the repeated singular values, we introduce the following setup. Let $r_Y:= \mathrm{rank}(\bm Y)$ and $p_Y$ denote the number of distinct positive singular values of $\bm Y$. That is, there exist indices $s_0, s_1, \dots, s_{p_Y}$ such that $0= s_{0} < s_{1} < \dots < s_{p_Y} = r_Y$ and
\begin{align}\label{eq:SY1}
  \textstyle y_{s_{0}+1} = \dots = y_{s_{1}} > y_{s_{1}+1} = \dots = y_{s_{2}} > 
  \dots > 
  y_{s_{p_Y}+1} = \cdots = y_{d_{\text{min}}} = 0.    
\end{align}
Note that if $r_Y = d_{\min}$, then $y_{s_{p_Y}} = y_{d_{\min}} > 0$. Let $h_i := s_i - s_{i-1}$ denote the multiplicity of the $i$-th largest positive singular value for each $i \in [p_Y]$. 
Throughout the paper, we consistently use the above notation. 

\subsection{Characterization of the Critical Point Set}


To begin, we introduce the following assumption on the widths of network layers. 
\begin{assumption}\label{AS:1}
The network widths satisfy $\min\{d_1,\dots,d_{L-1}\} \ge \min\{d_0,d_L\}$. 
\end{assumption}
\noindent This assumption ensures that the width of each hidden layer is no less than that of the input or output layer. Notably, it aligns with common practices in deep learning, where hidden layers are typically designed to be wide to have sufficient capacity to capture complex data representations \cite{hanin2019universal,raghu2017expressive}. In addition, this assumption is widely used in the literature to analyze the convergence behavior of GD for optimizing deep networks; see, e.g., \cite{arora2018convergence,du2019width,du2018gradient,laurent2018deep}.  Then we derive the closed-form expression of the critical point set $\mathcal{W}_F$ as follows: 
\begin{thm}[Characterization of critical points]\label{thm:opti}
Suppose that Assumption~\ref{AS:1} holds.
It holds that $\bm W \in \mathcal{W}_F$ if and only if
\begin{subequations}\label{eq:W}
\begin{align}
&  \bm W_1 = \bm Q_2\bm \Sigma_1 \mathrm{BlkD}\left( \bm O_1,\dots,\bm O_{p_Y},\bm O_{p_Y+1} \right)\bm V_Y^T,\\
& \bm W_l = \bm Q_{l+1} \bm \Sigma_l \bm Q_l^T,\ l=2,\dots,L-1,  \\
&  \bm W_L =  \bm U_Y\mathrm{BlkD}\left( \bm O_1^T,\dots,\bm O_{p_Y}^T,\widehat{\bm O}_{p_Y+1}^{T} \right) \bm \Sigma_L \bm Q_L^T,
\end{align}
\end{subequations}
where $\bm Q_l \in \mathcal{O}^{d_{l-1}}$ for all $l=2,\dots,L$, $\bm O_i \in \mathcal{O}^{h_i}$ for each $i \in [p_Y]$, $\bm O_{p_Y+1} \in \mathcal{O}^{d_0 - r_{Y}}$, $\widehat{\bm O}_{p_Y+1} \in \mathcal{O}^{d_L - r_{Y}}$, and $\bm \Sigma_l = \mathrm{BlkD}\left(\mathrm{diag}(\bm \sigma)/\sqrt{\lambda_l}, \bm 0 \right) \in \R^{d_l\times d_{l-1}}$ for each $l \in [L]$ with $\bm \sigma \in \R^{r_Y}$ satisfying  
\begin{align}\label{eq:sigma W}
\textstyle\sigma_i^{2L-1} - \sqrt{\lambda}y_i \sigma_i^{L-1} +  \lambda \sigma_i = 0,\ \sigma_i \geq 0,\ \forall i \in [r_Y],\ \text{where}\ \lambda := \prod_{l=1}^L \lambda_l. 
\end{align}
\end{thm}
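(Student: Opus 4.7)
The overall strategy is to prove both directions of the iff. The ``if'' direction amounts to direct substitution of the claimed form into the $L$ gradient equations $\nabla_{\bm W_l}F=\bm 0$ and verifying they hold; since the internal $\bm Q_l$ matrices and the block-diagonal $\bm O_i$'s (whose blocks live in equal-singular-value subspaces) all cancel when the product $\bm W_{L:1}$ is formed, this reduces to checking a scalar identity per singular value. For the ``only if'' direction, the plan is to extract a ``balancedness'' identity between consecutive layers, use it to reveal a shared SVD structure across all layers with singular values governed by a single vector $\bm\sigma$, then invoke the critical point equations once more to align the outer factors with $\bm U_Y,\bm V_Y$ and to obtain the polynomial equation \eqref{eq:sigma W}.

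For balancedness, I would multiply $\nabla_{\bm W_l} F = \bm 0$ on the right by $\bm W_l^T$ and $\nabla_{\bm W_{l+1}} F = \bm 0$ on the left by $\bm W_{l+1}^T$; the ``data-dependent'' term $\bm W_{L:l+1}^T(\bm W_{L:1}-\bm Y)\bm W_{l:1}^T$ is common to both, so subtraction yields
\[
\lambda_l\,\bm W_l\bm W_l^T \;=\; \lambda_{l+1}\,\bm W_{l+1}^T\bm W_{l+1},\qquad l=1,\dots,L-1.
\]
Taking an SVD $\bm W_{l+1}=\bm U_{l+1}\bm S_{l+1}\bm V_{l+1}^T$, this forces $\bm W_l$ to admit an SVD $\bm W_l=\bm U_l\bm S_l\bm V_l^T$ with $\bm U_l=\bm V_{l+1}$ (up to rotations inside equal-singular-value subspaces) and with common nonzero singular values $\sigma_i/\sqrt{\lambda_l}$ coming from a single vector $\bm\sigma\in\mathbb{R}^{r}$ for some rank $r$. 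Setting $\bm Q_{l+1}:=\bm V_{l+1}$ for $l=1,\dots,L-1$ then yields the interior form $\bm W_l=\bm Q_{l+1}\bm\Sigma_l\bm Q_l^T$ stated in the theorem; the first layer inherits a free right factor $\bm R_1\in\mathcal{O}^{d_0}$ and the last layer inherits a free left factor $\bm L_L\in\mathcal{O}^{d_L}$, not pinned down by balancedness alone.

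To align the outer factors with $\bm Y$, I would substitute the common SVD structure into $\nabla_{\bm W_L}F=\bm 0$, which, after telescoping the products $\bm W_{L:1}$ and $\bm W_{L-1:1}$, becomes an identity of the form $\bm L_L\,A\,\bm R_1^T = \bm Y\,\bm R_1\,B\,\bm Q_L^T$ for explicit diagonal matrices $A,B$ depending only on $\bm\sigma$ and the $\lambda_l$. Under \Cref{AS:1} the interior matrix $B$ is nonsingular on the relevant support, and identifying this equation with the SVD $\bm Y=\bm U_Y\bm\Sigma_Y\bm V_Y^T$ forces $\bm L_L$ and $\bm R_1$ to agree with $\bm U_Y$ and $\bm V_Y$ on the nonzero-singular-value subspaces. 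Comparing the diagonal entries on the rank-$r_Y$ support then yields, for each $i\in[r_Y]$,
\[
\Bigl(\tfrac{\sigma_i^L}{\sqrt{\lambda_1\cdots\lambda_L}}-y_i\Bigr)\cdot\tfrac{\sigma_i^{L-1}}{\sqrt{\lambda_1\cdots\lambda_{L-1}}}+\sqrt{\lambda_L}\,\sigma_i=0,
\]
which after clearing denominators is exactly \eqref{eq:sigma W}; coordinates outside the support force $\sigma_i=0$, still satisfying \eqref{eq:sigma W}.

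The main obstacle is the repeated singular values of $\bm Y$. Since the SVD is only unique up to simultaneous block-orthogonal rotations inside each multiplicity class of size $h_i$, the alignment step cannot force $\bm L_L=\bm U_Y$ and $\bm R_1=\bm V_Y$ outright; it only pins them down up to matrices of the form $\mathrm{BlkDiag}(\bm O_1,\dots,\bm O_{p_Y},\cdot)$ with $\bm O_i\in\mathcal{O}^{h_i}$, together with the free orthogonal blocks $\bm O_{p_Y+1}\in\mathcal{O}^{d_0-r_Y}$ and $\widehat{\bm O}_{p_Y+1}\in\mathcal{O}^{d_L-r_Y}$ on the kernel parts (whose dimensions differ because $\bm Y$ is rectangular). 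The subtle point is that the \emph{same} blocks $\bm O_i$ must appear in $\bm W_1$ and (transposed) in $\bm W_L$, because when the product $\bm W_{L:1}$ is formed these two copies must cancel against each other within each $h_i$-block, leaving only $\bm U_Y\bm\Sigma_{\mathrm{prod}}\bm V_Y^T$; tracking this cancellation carefully, and separating the $r_Y$-dimensional ``signal'' part from the kernel, is where the bulk of the bookkeeping lives, and is what produces the precise block-diagonal decorations in \eqref{eq:W}.
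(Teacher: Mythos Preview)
Your approach is essentially the paper's: derive balancedness, extract a common SVD structure with singular values $\sigma_i/\sqrt{\lambda_l}$, then use the outer gradient equations to align the free factors with $\bm U_Y,\bm V_Y$ up to block-orthogonal freedom inside each multiplicity class. The paper takes one extra normalization step---rescaling $\bm W_l\mapsto\sqrt{\lambda_l}\,\bm W_l$ to reduce to the equal-regularizer problem \eqref{eq:G} (\Cref{lem:equi FG})---so that balancedness becomes $\bm W_l\bm W_l^T=\bm W_{l+1}^T\bm W_{l+1}$ with no $\lambda_l$'s floating around, but this is cosmetic; your direct attack on $F$ is equally valid.

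There is one real subtlety you gloss over. After the SVD step your $\bm\sigma$ is sorted, and the alignment equation identifies the diagonal matrix $(\tilde{\bm\Sigma}^L+\lambda\tilde{\bm\Sigma}^{2-L})/\sqrt{\lambda}$ (together with a residual block) as having the same \emph{multiset} of singular values as $\bm Y$. But for $L\ge 3$ the map $\sigma\mapsto(\sigma^L+\lambda\sigma^{2-L})/\sqrt{\lambda}$ is not monotone, so sorted $\sigma_i$'s need not produce sorted $y$-values; what you actually get is $\sigma_i$ satisfying \eqref{eq:sigma W} with $y_{\pi(i)}$ for some permutation $\pi$, not with $y_i$. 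The paper handles this explicitly: \Cref{prop:opti G} carries the permutation $\bm\Pi$, and \Cref{thm:opti G} then shows $\bm\Pi$ can be absorbed into the free orthogonal factors $\bm Q_l$ (since $\blk(\bm\Pi,\bm I)$ is itself orthogonal and commutes appropriately with the diagonal $\bm\Sigma_l$'s after conjugation). Your ``identifying this equation with the SVD forces $\bm L_L,\bm R_1$ to agree with $\bm U_Y,\bm V_Y$'' skips this; you would need to insert the same permutation-absorption argument before concluding the index-matched form in \eqref{eq:sigma W}.
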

We remark that this theorem is a special case of \cite[Theorem 2.1]{chen2025complete}, which provides a closed-form characterization of the critical point set of Problem \eqref{eq:F} without any conditions on the network widths. In contrast, by leveraging Assumption \ref{AS:1}, we obtain a simplified and more explicit description of the critical point set in this theorem. This characterization serves as a cornerstone for establishing the error bound for Problem \eqref{eq:F}. In general, these critical points are not isolated points but form a union of connected sets (see \Cref{prop:set W}). This brings significant difficulty in computing the distance $\mathrm{dist}(\bm W, \mathcal{W}_F)$ from a point $\bm W$ to the critical point set, as this distance does not admit a closed-form solution due to the complex form of $\mathcal{W}_F$. One of our key contributions is to develop a method that overcomes this difficulty.

\subsection{Error Bound for Deep Linear Networks}

Before we proceed, it is worth noting that there are some degenerate cases under which the error bound does not hold. For example, when $L=2$, we study 
\begin{align*}
\min_{\bm W_1, \bm W_2 \in \R^{d\times d}} F(\bm W) =  \|\bm W_2\bm W_1 - \lambda \bm I_d \|_F^2 +   \lambda \left( \|\bm W_1\|_F^2 + \|\bm W_2\|_F^2 \right).
\end{align*}
Applying \Cref{thm:opti} to the above problem yields $\mathcal{W}_F = \{(\bm 0, \bm 0)\}$. Now, consider \(\bm W_1 = \bm W_2 = x \bm I_d	\). We compute  
\begin{align*}
\nabla_1 F (\bm W) =  2\bm W_2^T \left(\bm W_2 \bm W_1 - \lambda \bm I_d\right) +  2\lambda \bm W_1 = 2 x^3 \bm I_d,\\ \nabla_2 F (\bm W) =  2\left(\bm W_2 \bm W_1 - \lambda \bm I_d\right)\bm W_1^T + 2\lambda \bm W_2 = 2 x^3 \bm I_d. 
\end{align*}
Therefore, we have $\|\nabla F(\bm W)\|_F^2 = 8dx^6$. Moreover, we compute $\mathrm{dist}^2(\bm W,\mathcal{W}_F) = \|\bm W_1\|_F^2 + \|\bm W_2\|_F^2 = 2d x^2$. Consequently, we have \(\sqrt{d} \|\nabla F(\bm W)\|_F =  \mathrm{dist}^3(\bm W,\mathcal{W})\). Obviously, the error bound \eqref{eq:eb} does not hold as $x \to 0$. To avoid such degenerate cases, we impose the following conditions on the relationship between the regularization parameters and the data matrix $\bm Y$. 

\begin{assumption}\label{AS:2}
It holds for $L=2$ that
\begin{align}\label{eq:AS L=2}
\lambda := \lambda_1\lambda_2 \neq y_i^2,\ \forall i \in [r_Y].    
\end{align}
For all $L \ge 3$, it holds that 
\begin{align}\label{eq:AS y}
\lambda :=  \lambda_1\cdots\lambda_L \neq y_i^{2(L-1)}\left( \left( \frac{L-2}{L} \right)^{\frac{L}{2(L-1)}} + \left( \frac{L}{L-2} \right)^{\frac{L-2}{2(L-1)}} \right)^{-2(L-1)},\ \forall i \in [r_Y]. 
\end{align}
\end{assumption}

Notably, this assumption provides necessary and sufficient conditions under which the error bound holds for all critical points of Problem \eqref{eq:F}. Specifically, if Assumption \ref{AS:2} does not hold, the error bound fails to hold (see Appendix \ref{app sec:counter}). Conversely, if Assumption \ref{AS:2} holds, the error bound holds as follows:  

\begin{thm}[Error bound]\label{thm:eb}
Suppose that Assumptions \ref{AS:1} and \ref{AS:2} hold. There exist constants $\epsilon_1, \kappa_1 > 0$ such that for all $\bm W$ satisfying $\mathrm{dist}(\bm W, \mathcal{W}_F) \le \epsilon_1$,  we have
\begin{align}\label{eq:eb F}
\mathrm{dist}\left(\bm W, \mathcal{W}_F\right) \le \kappa_1\|\nabla F(\bm W)\|_F. 
\end{align}
\end{thm}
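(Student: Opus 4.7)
The plan is to establish \Cref{thm:eb} by proving local quadratic growth of $F$ at every critical point and then passing to a uniform constant via a compactness argument. Specifically, I will show that for every $\bm W^* \in \mathcal{W}$ there exist $c, \delta > 0$ with $F(\bm W) - F(\bm W^*) \ge c\cdot \mathrm{dist}^2(\bm W,\mathcal{W})$ whenever $\|\bm W - \bm W^*\|_F \le \delta$, and then invoke the standard equivalence between local quadratic growth and the local error bound for smooth functions \cite{rebjock2024fast,liao2024error}. The set $\mathcal{W}$ is compact, since \eqref{eq:sigma W} admits only finitely many nonnegative solutions $\bm\sigma$ and all remaining parameters in \eqref{eq:W} lie in compact orthogonal groups; a finite cover of $\mathcal{W}$ by neighborhoods on which the pointwise estimate holds then produces the uniform $\epsilon_1,\kappa_1$ required by the statement.

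For a fixed $\bm W^*$, I would first normalize via the isometry $\bm W_1 \mapsto \bm W_1\bm V_Y$, $\bm W_L \mapsto \bm U_Y^T\bm W_L$, which preserves both $F$ (after substituting $\bm Y \to \bm\Sigma_Y$) and $\mathcal{W}$, allowing me to assume $\bm W^*$ is in the block-diagonal form of \eqref{eq:W}. Next I would identify the tangent space $T_{\bm W^*}\mathcal{W}$ as the image of the Lie algebra action of the gauge group $\mathcal{O}^{d_1}\times\cdots\times\mathcal{O}^{d_{L-1}}$ (the hidden-layer rotational invariance $(\bm W_{l+1},\bm W_l)\sim(\bm W_{l+1}\bm Q^T,\bm Q\bm W_l)$) together with the block freedoms $\mathcal{O}^{h_1}\times\cdots\times\mathcal{O}^{h_{p_Y}}$ and the residual blocks $\mathcal{O}^{d_0-r_Y}$, $\mathcal{O}^{d_L-r_Y}$ arising from the repeated singular values of $\bm Y$. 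Choosing a transverse slice and parameterizing nearby $\bm W$ in slice coordinates, local quadratic growth reduces to positive-definiteness of $\nabla^2 F(\bm W^*)$ restricted to the slice.

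To verify this positive-definiteness, I would decompose slice directions into two classes, keyed to the partition $y_{s_{j-1}+1}=\cdots=y_{s_j}$ of $\bm Y$'s singular values. \emph{Singular-value perturbations}, which alter the shared diagonal entry $\sigma_i$ across all layers, contribute a one-dimensional Hessian block proportional to $p'(\sigma_i)$, where $p(\sigma) := \sigma^{2L-1} - \sqrt{\lambda_1\cdots\lambda_L}\,y_i\,\sigma^{L-1} + \lambda_1\cdots\lambda_L\,\sigma$ is the defining polynomial of \eqref{eq:sigma W}. Eliminating $\sigma$ between $p(\sigma)=0$ and $p'(\sigma)=0$ recovers exactly the equalities excluded in \Cref{AS:2} (the case $L=2$ degenerating to $\lambda_1\lambda_2=y_i^2$), so \Cref{AS:2} is equivalent to $p'(\sigma_i)\neq 0$ at every positive root, which after a sign check gives $p'(\sigma_i)>0$. \emph{Interblock subspace perturbations}, which tilt the singular subspaces of the $\bm W_l^*$ between distinct $y$-blocks, couple to the residual $\bm W_L^*\cdots\bm W_1^* - \bm\Sigma_Y$ and produce quadratic forms whose eigenvalues are controlled below by positive combinations of the gaps $y_i - y_j$ and the $\sigma_i,\sigma_j$; these also remain strictly positive under \Cref{AS:2}.

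The principal obstacle I anticipate is the interblock analysis in the presence of repeated singular values of $\bm Y$: the $\bm O_i$ and $\widehat{\bm O}_{p_Y+1}$ in \eqref{eq:W}, together with the consecutive-layer rotations $\bm Q_l$, form a high-dimensional gauge symmetry whose tangent space must be quotiented out precisely before the Hessian becomes definite, and the inter-block cross terms must be shown to combine constructively rather than cancel the diagonal contributions from the two types above. This forces one to choose the slice carefully and to block-decompose the Hessian jointly by pairs $(i,j)$ of distinct singular-value groups, rather than relying on a generic implicit function argument; I expect this to correspond to the ``new techniques'' for handling repeated singular values advertised in the introduction. Once slice positive-definiteness is established, local quadratic growth follows, and the compactness argument yields the uniform error bound \eqref{eq:eb F}.
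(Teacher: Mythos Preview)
Your approach has a fundamental gap: quadratic growth $F(\bm W) - F(\bm W^*) \ge c\,\mathrm{dist}^2(\bm W,\mathcal{W})$ and positive-definiteness of the restricted Hessian can only hold when $\bm W^*$ is a local minimizer. The critical set $\mathcal{W}$ in \Cref{thm:opti} contains saddle points (for instance, the all-zero point when some $y_i>0$ and $L\ge 3$, or more generally any $\bm\sigma^*$ in which some coordinate takes the smaller positive root of \eqref{eq:sigma W}), and at those points $F(\bm W)-F(\bm W^*)$ is negative along the unstable directions. The equivalences you cite from \cite{rebjock2024fast,liao2024error} between quadratic growth and the error bound are stated for local minimizers; indeed, \Cref{coro:PL QG}(ii) in this paper only asserts quadratic growth under that hypothesis. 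Concretely, your claim ``after a sign check gives $p'(\sigma_i)>0$'' is false: \Cref{AS:2} is equivalent only to $p'(\sigma_i)\neq 0$ at every positive root (this is exactly \Cref{lem:phi} in the appendix, reformulated via $\varphi$), and for $L\ge 3$ the smaller positive root has $p'(\sigma_i)<0$.

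A salvageable version of your idea would replace positive-definiteness by \emph{nondegeneracy} of $\nabla^2 F(\bm W^*)$ on a transverse slice (the Morse--Bott condition), which does imply the local error bound at arbitrary critical points. But then you must actually verify Morse--Bott, and your sketch of the interblock analysis is far from that: the directions you call ``interblock subspace perturbations'' couple layers and singular-value groups simultaneously, and you would need to diagonalize the Hessian there and rule out zero eigenvalues, not merely identify the tangent space to $\mathcal{W}$. The paper avoids Hessian computations entirely. It reduces to the balanced problem $G$ via \Cref{lem:equi FG}, decomposes $\mathcal{W}_G$ into well-separated pieces $\mathcal{W}_{\bm\sigma^*}$ (\Cref{prop:set W}), and for each piece constructs an explicit intermediate point $\hat{\bm W}$ from the singular vectors of $\bm W$ and the singular values $\bm\sigma^*$, then bounds $\|\bm W-\hat{\bm W}\|_F$ and $\mathrm{dist}(\hat{\bm W},\mathcal{W}_{\bm\sigma^*})$ directly by $\|\nabla G(\bm W)\|_F$ through first-order perturbation estimates (\Cref{prop:Hlapproximate}, \Cref{prop:singular value 2}, \Cref{prop:singular control}). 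This route works uniformly across minimizers and saddles and never requires sign information on the Hessian.
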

This theorem is significant as it shows that the error bound holds at any critical point of Problem \eqref{eq:F} under Assumptions \ref{AS:1} and \ref{AS:2}.  Importantly, the constants $\kappa_1, \epsilon_1$ can be explicitly derived through our proofs (see \Cref{prop:eb zero}, \Cref{thm:eb G}, and \eqref{eq:def_kappa_epsilon} and \eqref{eq:def_epsilon_kappa_1} in the proof of \Cref{thm:eb}). 
Now, we discuss some implications of our result. First, the established error bound can be used to derive other regularity conditions, such as P\L\ inequality (see \eqref{eq:PL}) and quadratic growth (see \eqref{eq:QG}), for analyzing the convergence behavior of first-order methods. This result is well known; see, e.g., \cite{rebjock2024fast}.  
\begin{coro}\label{coro:PL QG}
Suppose that $\bm W^*$ is a critical point and \eqref{eq:eb F} holds for all $\bm W$ satisfying $\|\bm W - \bm W^*\|_F \le \epsilon_1$.\\ 
(i) {\bf (P\L\ inequality)} There exists a constant $\mu_1 > 0$ such that 
\begin{align}\label{eq:PL}
    \|\nabla F(\bm W)\|_F^2 \ge \mu_1 ( F(\bm W) - F(\bm W^*)). 
\end{align}
(ii) {\bf (Quadratic growth)} If $\bm W^*$ is a local minimizer, there exists a constant $\mu_2 > 0$ such that 
\begin{equation}\label{eq:QG}
    \mathrm{dist}^2(\bm{W}, \mathcal{W}_F) \leq \mu_2 \left( F(\bm{W}) - F(\bm{W}^*) \right).
\end{equation}
\end{coro}

Second, by combining the error bound with other algorithm-dependent conditions, such as {\em sufficient decrease}, {\em cost-to-go estimate}, and {\em safeguard}, we can establish local linear convergence of first-order methods for solving Problem \eqref{eq:F} (see Appendix \ref{app:sec C}). In contrast to algorithm-specific convergence rate analyses in \citep{bah2022learning,chitour2023geometric,Hu2020Provable,nguegnang2024convergence,wu2019global}, the error-bound-based framework provides a unified analytical approach. In particular, it applies not only to GD but also to a broad class of first-order methods capable of optimizing Problem \eqref{eq:F}. Moreover, our experimental results in \Cref{sec:expe} demonstrate that GD achieves linear convergence to critical points, which further supports our theoretical findings. 

Third, we emphasize our technical contributions to establishing the error bound. Unlike prior works \cite{Liu2019,jiang2019novel,jiang2022holderian,wang2023linear}, where the critical point set $\mathcal{W}_F$ has a relatively simple structure that allows for closed-form computation of the distance $\mathrm{dist}(\bm W, \mathcal{W}_F)$, the presence of orthogonal permutations and hierarchical structures in $\mathcal{W}_F$ (see \Cref{thm:opti}) prevents such computation. To address this challenge, we develop a suite of new techniques (see \Cref{subsubsec:pf eb}), which offer a new framework for establishing error bounds of non-convex problems with complex solution sets.  

\section{Proofs of the Main Results}\label{sec:proof}

In this section, we present the proofs of our main results on the critical point set (i.e., \Cref{thm:opti}) in \Cref{subsec:pre} and the error bound (i.e., \Cref{thm:eb}) for Problem \eqref{eq:F} in \Cref{subsec:ana eb}. To facilitate readability, we summarize the notations used in this section in \Cref{tab:notations}.

\subsection{Proofs of the Critical Point Set}\label{subsec:pre}

To characterize the critical points and establish the error bound for Problem \eqref{eq:F}, we claim that it suffices to study the following problem: 
\begin{align}\label{eq:G}
    \min_{\bm W}\ G(\bm W) := \left\|\bm W_L\cdots\bm W_1 -  \sqrt{\lambda}\bm Y \right\|_F^2 + \lambda \sum_{l=1}^L \|\bm W_l\|_F^2,
\end{align}
where $\lambda := \prod_{l=1}^L \lambda_l$, as defined in \eqref{eq:AS L=2} and \eqref{eq:AS y}. We compute the gradient of $G(\bm W)$ as follows:
\begin{align}
\label{eq:grad G}
 \nabla_{l} G(\bm W) = 2\bm W_{L:l+1}^T\left(\bm W_{L:1} - \sqrt{\lambda}\bm Y\right)\bm W_{l-1:1}^T + 2\lambda \bm W_l,\ \forall l \in [L]. 
 \end{align} 
The critical point set of Problem \eqref{eq:G} is defined as
{
\begin{align}
\mathcal{W}_G := \left\{ \bm W = (\bm W_1,\dots,\bm W_L): \nabla  G(\bm W) = \bm 0 \right\}. 
\notag
\end{align} 
\begin{table}[t]
\centering\small
\resizebox{\textwidth}{!}{%
\begin{tabular}{c|c|c|c|c|c|c|c|c|c|c|c}
\hline
Notation & Eq. & Notation & Eq. & Notation & Eq. & Notation & Eq. & Notation & Eq. & Notation & Eq. \\
\hline
$p$&\eqref{eq:partsigma}&$p_{Y},r_Y$&\eqref{eq:SY1}&$s_i,h_i$&\eqref{eq:SY1}&$t_i,g_i$&\eqref{eq:partsigma}&$\varphi(x)$&\eqref{eq:phi}&$G(\bmw)$&\eqref{eq:G}\\
\hline
$\lambda$ & (\ref{eq:AS L=2},\ref{eq:AS y}) &
$\delta_y$ & \eqref{eq:delta y} &
$\delta_\sigma, \mathcal{\bm Y}$ & \eqref{set:Y} &
$\mathcal{\bm A}$ & \eqref{set:A} &
$\mathcal{\bm B}$ & \eqref{set:B} &
$\mathcal{\bm A}_{\text{sort}}$ & \eqref{set:W sigma} \\
\hline
$\eta_1$ & \eqref{eq:eta1} &
$\eta_2$ & \eqref{eq:eta_2def} &
$\eta_3,\eta_4$ & \eqref{eq:eta34} &
$\eta_5$ & \eqref{eq:upper_v} &
$L_G$ & \eqref{eq:defLG}&
$c_1$ & \eqref{eq:c1} \\
\hline
$c_2$ & \eqref{eq:c2} &
$c_3$ & \eqref{eq:c3} &
$c_4$ & \eqref{eq:upper_v} &
$c_5$ & \eqref{eq:c5} &
$\delta_1,\delta_2$ & \eqref{eq:delta} &
$\sigma^*_{\min},\sigma_{\max}^*,r_{\sigma}$ & \eqref{def:r sigma} \\
\hline
$\epsilon_{\bm 0},\kappa_{\bm 0}$&\eqref{def: epsilon_kappa1}&$\epsilon_{\bm \sigma^*},\kappa_{\bm \sigma^*}$&\eqref{def:epsilon_kappa_2}&$\mathcal{W}_{\bm \sigma^*}$& (\ref{eq:partition},\ref{set:W sigma})&$\kappa,\epsilon$&\eqref{eq:def_kappa_epsilon}&$\kappa_1,\epsilon_1$&\eqref{eq:def_epsilon_kappa_1}&$d_{\max}$&\eqref{eq:delta y}\\
\hline
\end{tabular}}
\caption{\centering The explicit expressions of the notations in the paper}
\label{tab:notations}
\vspace{-0.15in}
\end{table}
}
Now, we present the following lemma to prove our claim.
\begin{lemma}\label{lem:equi FG}
Consider Problems \eqref{eq:F} and \eqref{eq:G}. The following statements hold: \\
(i) $(\bm W_1,\dots,\bm W_L) \in \mathcal{W}_F$ if and only if $(\sqrt{\lambda_1}\bm W_1,\dots,\sqrt{\lambda_L}\bm W_L) \in \mathcal{W}_G$. \\
(ii)
Let $\bm{Y} = \bm{U}_Y \bm{\Sigma}_Y \bm{V}_Y^T$ be an SVD of $\bm{Y}$. Then $(\bm{W}_1, \ldots, \bm{W}_L)$ is a critical point of Problem~\eqref{eq:G} if and only if $(\bm{W}_1 \bm{V}_Y,\, \bm{W}_2,\, \ldots,\, \bm{W}_{L-1},\, \bm{U}_Y^T \bm{W}_L)$ is a critical point of
$
\min_{\bm{W}}\, \big\|\, \bm{W}_{L:1} - \sqrt{\lambda} \bm{\Sigma}_Y \big\|_F^2 + \lambda \sum_{l=1}^L \|\bm{W}_l\|_F^2.
$\\
(iii) Suppose that there exist constants \(\epsilon, \kappa > 0\) such that for all \(\bm Z\) satisfying \(\mathrm{dist}(\bm Z, \mathcal{W}_G) \leq \epsilon\), we have 
\begin{align}\label{eq:ebG}
    \mathrm{dist}(\bm Z,  \mathcal{W}_G) \le \kappa \|\nabla G(\bm Z)\|_F.
\end{align} 
Then for all \(\bm W\) satisfying \(\mathrm{dist}(\bm W, \mathcal{W}_F) \le \epsilon/\sqrt{\lambda_{\max}}\), it holds that 
$$
    \mathrm{dist}\left(\bm W, \mathcal{W}_F\right) \le ({\kappa \lambda}/{\lambda_{\min}}) \|\nabla F(\bm W)\|_F.
$$
\end{lemma}
\begin{proof}
(i) and (ii) have been proved in \cite[Lemma 3.1]{chen2025complete}. Now, we prove (iii). Using (i), we have $$\mathcal{W}_G=\left\{(\sqrt{\lambda_1} \bm W_1,\dots,\sqrt{\lambda_L} \bm W_L): (\bm W_1, \dots, \bm W_L) \in \mathcal{W}_F\right\}.$$ Let $\bm W = \left( \bm W_1,\dots,\bm W_L\right)$ be arbitrary and $\hat{\bm W} = (\sqrt{\lambda_1}\bm W_1,\dots,\sqrt{\lambda_L}\bm W_L)$. Let $(\sqrt{\lambda_1}\hat{\bm W}_1^*,\dots,\sqrt{\lambda_L}\hat{\bm W}_L^*)$ with $(\hat{\bm W}_1^*,\dots,\hat{\bm W}_L^*) \in \mathcal{W}_F$ be such that $\mathrm{dist}^2(\hat{\bm W}, \mathcal{W}_G) = \sum_{l=1}^L \lambda_l\|\bm W_l - \hat{\bm W}^*_l\|_F^2$, and let $(\bm W_1^*,\dots,\bm W_L^*) \in \mathcal{W}_F$ be such that $\mathrm{dist}^2(\bm W,\mathcal{W}_F) = \sum_{l=1}^L \|\bm W_l - \bm W_l^*\|_F^2$. We have 
\begin{align}\label{eq:distconnect}
    \mathrm{dist}^2(\bm W,\mathcal{W}_F) & = \sum_{l=1}^L \|\bm W_l - \bm W_l^*\|_F^2 \ge \frac{\sum_{l=1}^L \|\sqrt{\lambda_l}\bm W_l - \sqrt{\lambda_l}\bm W_l^*\|_F^2}{\lambda_{\max}} \notag \\
    & \ge \frac{1}{\lambda_{\max}}\mathrm{dist}^2(\hat{\bm W}, \mathcal{W}_G). 
\end{align}
Using this and $\mathrm{dist}(\bm W,\mathcal{W}_F) \le  \epsilon/\sqrt{\lambda_{\max}}$ yields $\mathrm{dist}(\hat{\bm W}, \mathcal{W}_G) \le \epsilon$, and thus \eqref{eq:ebG} holds.  
Next, we have
{\small
\begin{align}
     & \textstyle\mathrm{dist}^2(\hat{\bm W}, \mathcal{W}_G)\! =\!\!\sum_{l=1}^L \lambda_l\|\bm W_l - \hat{\bm W}^*_l\|_F^2 
    \ge\! \lambda_{\min} \mathrm{dist}^2(\bm W, \mathcal{W}_F),\label{eq2:lem equi}\\  
    & \nabla_{l} G(\hat{\bm W}) \overset{\eqref{eq:grad G}}{=} 2\sqrt{\frac{\lambda}{\lambda_l}}\bm W_{L:l+1}^T\left(\sqrt{\lambda}\bm W_{L:1} - \sqrt{\lambda}\bm Y\right)\bm W_{l-1:1}^T  + 2 \lambda \sqrt{\lambda_l} \bm W_l\notag\\
    &\quad \quad \quad \quad \ =  \frac{\lambda}{\sqrt{\lambda_l}}  \nabla_{l} F(\bm W),\ \forall l \in [L].  \label{eq:gradientnorm}
\end{align} 
}
Therefore, we have 
$$
    \mathrm{dist}(\bm W, \mathcal{W}_F) \overset{\eqref{eq2:lem equi}}{\le} \frac{1}{\sqrt{\lambda_{\min}}} \mathrm{dist}(\hat{\bm W}, \mathcal{W}_G) \overset{\eqref{eq:ebG}}{\le} \frac{\kappa}{\sqrt{\lambda_{\min}}} \|\nabla G(\hat{\bm W})\|_F  \overset{\eqref{eq:gradientnorm}}{\le} \frac{\kappa\lambda}{\lambda_{\min}} \|\nabla F(\bm W)\|_F. 
$$
\end{proof}

Using (i) and (iii) of the above lemma, it suffices to establish an error bound for Problem \eqref{eq:G} in the rest of this section.
By \Cref{lem:equi FG}(ii), we assume without loss of generality that $\bm Y = \bm \Sigma_Y$ from now on, where $\bm \Sigma_Y$ is defined in \eqref{eq:SY}  
and its diagonal entries are partitioned as in \eqref{eq:SY1}. 
Moreover, we define
\begin{align}\label{eq:delta y}
\delta_y := \min \left\{|y_{s_i}-y_{s_{i+1}}|: i\in [p_Y] \right\},\quad d_{\max} := \max\{d_0,d_1,\ldots,d_L\}. 
\end{align} 
Using \cite[Proposition 3.3]{chen2025complete} and Assumption~\ref{AS:1}, it is straightfoward to verify that the critical point set of Problem \eqref{eq:G} admits the following closed-form expression: 



\begin{fact}\label{prop:opti G}
Suppose that Assumption~\ref{AS:1} holds. The critical point set of Problem~\eqref{eq:G} is 
\begin{equation}
\label{eq:partition}
    \mathcal{W}_G = \bigcup_{(\bm{\sigma}, \bm{\Pi}) \in \mathcal{B}} \mathcal{W}_{(\bm{\sigma}, \bm{\Pi})},
\end{equation}
where  
{\small
\begin{align}\label{set:sigmapi}
& \mathcal{W}_{(\bm\sigma,\bm\Pi)} := \left\{
\bm W :
\begin{array}{l}
\bm \Sigma_l = \mathrm{BlkD}\left(\mathrm{diag}( \bm \sigma), \bm 0\right) \in \R^{d_l\times d_{l-1}},\ \forall l \in [L],\\ 
\bm W_1 = \bm Q_2\bm \Sigma_1 \mathrm{BlkD}\left(\bm \Pi, \bm I  \right)\mathrm{BlkD}\left( \bm O_1,\dots,\bm O_{p_{Y}},\bm O_{p_{Y}+1} \right), \\ 
\bm W_l = \bm Q_{l+1} \bm \Sigma_l \bm Q_l^T,\ l=2,\dots,L-1,\ \bm Q_l \in \mathcal{O}^{d_{l-1}},\ l=2,\dots,L,  \\
\bm W_L =  \mathrm{BlkD}\left( \bm O_1^T,\dots,\bm O_{p_{Y}}^T,\widehat{\bm O}_{p_{Y}+1}^T \right)\mathrm{BlkD}\left(\bm \Pi^T, \bm I \right)\bm \Sigma_L \bm Q_L^T, \\
\bm O_i \in \mathcal{O}^{h_i},\ \forall i \in [p_{Y}],\  \bm O_{p_{Y}+1} \in \mathcal{O}^{d_0 - r_{Y}},\ \widehat{\bm O}_{p_{Y}+1}  \in \mathcal{O}^{d_L - r_{Y}} 
\end{array} 
\right\},\\
&\mathcal{A} := \left\{ \bm a \in \R^{d_{\min}}: \ a_i^{2L-1} - \sqrt{\lambda}y_ia_i^{L-1} + \lambda a_i = 0,\ a_i \geq 0,\ \forall i \in [d_{\min}] \right\}, \label{set:A} \\
& \mathcal{B} := \left\{ (\bm \sigma, \bm \Pi)  \in \R^{d_{\min}} \times \mathcal{P}^{d_{\min}}:   
\bm a \in \mathcal{A},\  \bm \sigma =\bm \Pi \bm a,\ \sigma_1 \geq \sigma_2 \geq \dots \geq \sigma_{d_{\min}}  \right\}.\label{set:B} 
\end{align}
}
\end{fact}

Note that the permutation matrix $\bm \Pi$ can be absorbed into $\bm \Sigma_l$ and $\bm Q_l$ to simplify the above expression. With this observation, and by invoking \Cref{lem:equi FG}, we are now ready to prove \Cref{thm:opti} as follows: \smallskip 
\begin{proof}
By (ii) of \Cref{lem:equi FG}, without loss of generality, we may assume that $\bm Y = \bm \Sigma_Y$, i.e., $\bm U_Y = \bm I$ and $\bm V_Y = \bm I$.
Let $(\bm W_1,\dots,\bm W_L) \in \mathcal{W}_F$ be arbitrary. It follows from \Cref{prop:opti G} and \Cref{lem:equi FG} that there exists $(\bm \sigma,\bm \Pi) \in \cal B$ such that $(\bm W_1,\dots,\bm W_L)$ takes a form of \eqref{set:sigmapi} with $\bm \Sigma_l = \mathrm{BlkDiag}(\mathrm{diag}(\bm \sigma)/\sqrt{\lambda_l},\bm 0)$. According to \eqref{set:A} and \eqref{set:B}, there exists $\bm a \in \cal A$ such that $\bm \Pi \mathrm{diag}(\bm a) \bm \Pi^T = \mathrm{diag}(\bm \sigma)$. Then, let
\begin{align*}
    &\bm \Sigma_l^\prime := \blk(\bm \Pi^T , \bm I_{d_l - d_{\min}})\bm \Sigma_l \blk(\bm \Pi , \bm I_{d_{l-1} - d_{\min}}) = \blk\left(\mathrm{diag}(\bm a)/\sqrt{\lambda_l}, \bm 0 \right),\ \forall l \in [L],\\
    &\bm Q_l^\prime := \bm Q_l \blk(\bm \Pi , \bm I_{d_{l-1} - d_{\min}}) \in \mathcal{O}^{d_{l-1}},\ \forall l \in [L].
\end{align*}
Therefore, we obtain that $(\bm W_1,\dots,\bm W_L)$ can be rewriten in the form of  \eqref{eq:W} with
$\bm \Sigma_l=\bm \Sigma_l^\prime$, $\bm Q_l =\bm Q_l^\prime $.
Conversely, let $(\bm W_1,\dots,\bm W_L)$ be of the form of \eqref{eq:W} for some $\bm \sigma=(\sigma_1,\ldots,\sigma_{r_Y},0,\ldots0)\in \mathbb{R}^{d
_{\min}}$. Then, we choose a permutation matrix $\bm \Pi \in \mathcal{P}^{d_{\min}}$ such that
$\bm \sigma^\prime = \bm \Pi \bm \sigma\ \text{satisfies}\  \sigma^\prime_1 \geq \sigma^\prime_2 \ge \dots \ge \sigma^\prime_{d_{\min}}.$ Let 
\begin{align*}
    & \bm \Sigma^\prime_l := \mathrm{BlkDiag}(\bm \Pi , \bm I_{d_l - d_{\min}})\bm \Sigma_l \mathrm{BlkDiag}(\bm \Pi^T , \bm I_{d_{l-1} - d_{\min}}) = \blk\left(\mathrm{diag}(\bm \sigma^\prime)/\sqrt{\lambda_l}, \bm 0 \right),\ \forall l \in [L],\\
    &\bm Q^\prime_l := \bm Q_l \mathrm{BlkDiag}(\bm \Pi^T, \bm I_{d_{l-1} - d_{\min}}) ,\ \forall l \in [L].
\end{align*}
Then, we obtain that $(\bm W_1,\dots,\bm W_L)$ can be rewriten in the form of \eqref{set:sigmapi} with
$\bm \Sigma_l=\bm \Sigma_l^\prime$, $\bm Q_l =\bm Q_l^\prime $.
This, together with \Cref{prop:opti G} and item (i) and (ii) of \Cref{lem:equi FG}, implies $(\bm W_1,\dots,\bm W_L) \in \mathcal{W}_F$. 
\end{proof}

\vspace{-0.1in}
\subsection{Analysis of the Error Bound}\label{subsec:ana eb} 

In this subsection, we establish the error bound for Problem \eqref{eq:G} based on 
\Cref{prop:opti G}. In \Cref{subsubsec:pre}, we first present preliminary results for our analysis. In \Cref{subsubsec:pf eb}, we provide the main proofs of the error bound. For ease of exposition, we define  
{\footnotesize
\begin{align}\label{set:Y}
&\mathcal{Y} := \bigcup_{ i \in [d_{\min}]}\left\{ \sigma \ge 0:  \sigma^{2L-1} + \lambda\sigma - \sqrt{\lambda} y_i\sigma^{L-1} = 0  \right\},\  \delta_{\sigma} := \min\left\{|x-y|: x \neq y \in \mathcal{Y}  \right\}.
\end{align} 
}
The following result elucidates the structure of the collection $\{\mathcal{W}_{(\bm \sigma, \bm \Pi)}\}_{(\bm \sigma, \bm \Pi) \in \mathcal{B}}$. 
\begin{prop}\label{prop:set W}
Let $(\bm \sigma, \bm \Pi) \in \mathcal{B}$ and $(\bm \sigma^\prime, \bm \Pi^\prime) \in \mathcal{B}$ be arbitrary. The following statements hold: \\
(i) It holds that $\mathcal{W}_{(\bm \sigma, \bm \Pi)} = \mathcal{W}_{(\bm \sigma^\prime, \bm \Pi^\prime)}$ if and only if $\bm \sigma = \bm \sigma^\prime$. \\
(ii) If $\bm \sigma \neq \bm \sigma^\prime$, it holds that 
$
\mathrm{dist}\left( \mathcal{W}_{(\bm \sigma, \bm \Pi)}, \mathcal{W}_{(\bm \sigma^\prime, \bm \Pi^\prime)} \right) \ge \delta_{\sigma}. 
$
\end{prop} 
We defer the proof of this proposition to Appendix \ref{subsec app:set W}. This proposition demonstrates that for any pair $(\bm \sigma, \bm \Pi) \in \mathcal{B}$ and $(\bm \sigma^\prime, \bm \Pi^\prime) \in \mathcal{B}$, if $\bm \sigma \neq \bm \sigma^\prime$, $\mathcal{W}_{(\bm\sigma, \bm\Pi)}$ is well separated from $\mathcal{W}_{(\bm\sigma^\prime, \bm\Pi^\prime)}$, and otherwise they are identical. 
Therefore, with a slight abuse of notation, we write $\mathcal{W}_{\bm\sigma}:= \mathcal{W}_{(\bm \sigma, \bm \Pi)}$ for any $(\bm \sigma, \bm \Pi)\in \mathcal{B}$. Thus, we can express the set of critical points of Problem \eqref{eq:G} as follows:  
\begin{align}\label{set:W sigma}
\mathcal{W}_G := \bigcup_{\bm \sigma \in  \mathcal{A}_{\rm sort}} \mathcal{W}_{\bm \sigma },\ \text{where}\ \mathcal{A}_{\rm sort} := \left\{ \bm \sigma \in \R^{d_{\min}}: (\bm \sigma, \bm \Pi) \in  \mathcal{B} \right\}.  
\end{align}  

\subsubsection{Preliminary Results}\label{subsubsec:pre}

According to \Cref{prop:set W} and \eqref{set:W sigma}, for any $\bm W$ satisfying $\mathrm{dist}(\bm W, \mathcal{W}_G) \le \delta_{\sigma}/2$, there exists a $\bm \sigma^* \in \mathcal{A}_{\rm sort}$ such that 
$
\mathrm{dist}(\bm W, \mathcal{W}_G) = \mathrm{dist}(\bm W, \mathcal{W}_{\bm \sigma^*}). 
$
This observation simplifies our analysis, as it suffices to bound $\mathrm{dist}(\bm W, \mathcal{W}_{\bm \sigma^*})$ for each $\bm \sigma^* \in \mathcal{A}_{\rm sort}$. Note that $\bm 0 \in \mathcal{A}_{\rm sort}$. For this case, we directly show the error bound as follows:

\begin{prop}\label{prop:eb zero}
Suppose that Assumptions~\ref{AS:1} and \ref{AS:2} hold.  Then for all $\bm W$ satisfying $\mathrm{dist}(\bm W, \mathcal{W}_{\bm 0}) \le \epsilon_{\bm 0}$, we have
\begin{align}
\mathrm{dist}(\bm W,\mathcal{W}_{\bm 0}) 
\le \kappa_{\bm 0} \, \bigl\| \nabla G(\bm W) \bigr\|_F, 
\label{eq:eb 0}
\end{align}
where
{\small
\begin{align}\label{def: epsilon_kappa1} 
(\epsilon_{\bm 0},\, \kappa_{\bm 0}) :=
\begin{cases}
\Biggl( 
\Bigl( \frac{\sqrt{\lambda}}{2(\sqrt{\lambda}+y_1)} 
        \min\!\Bigl\{ \min\limits_{i \in [s_{p_Y}]} \bigl|\lambda - y_i^2\bigr|,\, \lambda \Bigr\} \Bigr)^{\frac{1}{2}},\;
\frac{2(\sqrt{\lambda}+y_1)}
     {\sqrt{\lambda} \min\!\Bigl\{ \min\limits_{i \in [s_{p_Y}]} \bigl|\lambda - y_i^2\bigr|,\, \lambda \Bigr\}}
\Biggr),\!\! &\!\!\!\!L=2, \\
\Biggl(
\min\!\left\{ \left( \frac{\lambda}{3} \right)^{{1}/{(2L-2)}},\;
               \left( \frac{\sqrt{\lambda}}{3y_1} \right)^{{1}/{(L-2)}} \right\},\;
\frac{3\sqrt{L}}{2\lambda}
\Biggr), \!\!&\!\!\!\! L \ge 3.
\end{cases}
\end{align}
}
\end{prop}
\begin{proof}
Let's begin by focusing on \( L = 2 \). Using \Cref{prop:opti G},
we have $\mathcal{W}_{\bm 0} = \{(\bm 0, \bm 0)\}$ and $\mathrm{dist}^2(\bm W,\mathcal{W}_{\bm 0}) = \|\bm W_1\|_F^2 + \|\bm W_2\|_F^2.$
Without loss of generality, assume that $\| \bm{W}_1 \|_F \ge \| \bm{W}_2 \|_F$. We have
\begin{align}
&\nabla_{1} G(\bm{W}) =  2\bm{W}_2^T\left(\bm{W}_2 \bm{W}_1 - \sqrt{\lambda}\bm{Y}\right)  + 2\lambda  \bm{W}_1,\notag\\
&\nabla_{2} G(\bm{W}) = 2\left(\bm W_2\bm W_1 - \sqrt{\lambda}\bm Y\right)\bm W_1^T + 2\lambda \bm W_2\label{eq3:lem eb zero}.
\end{align}
Using the first equation in \eqref{eq3:lem eb zero}, we have 
\begin{align}\label{eq4:lem eb zero}
\frac{\sqrt{\lambda}}{2}\| \nabla_{1} G(\bm{W}) \|_F & \ge \left\|\lambda \bm W_2^T\bm Y - \lambda^{\frac{3}{2}}\bm W_1\right\|_F - \sqrt{\lambda}\|\bm W_2^T\bm W_2\bm W_1\|_F  \notag\\
&\ge \left\|\lambda \bm W_2^T\bm Y - \lambda^{\frac{3}{2}}\bm W_1\right\|_F - \sqrt{\lambda}\|\bm W_2\|_F^2 \|\bm W_1\|_F.  
\end{align}
Multiplying $\bm Y^T$ on the both sides of the second equation in \eqref{eq3:lem eb zero}, together with the triangle inequality, yields \[\frac{1}{2}\left\|\bm Y^T\nabla_{2} G(\bm{W})\right\|_F  \ge \| \sqrt{\lambda}\bm Y^T\bm Y\bm W_1^T - \lambda\bm Y^T\bm W_2 \|_F - \|\bm Y^T\bm W_2\bm W_1\bm W_1^T\|_F,\]
which implies \[\frac{y_1}{2}  \|\nabla_{2} G(\bm{W}) \|_F \ge \| \sqrt{\lambda}\bm Y^T\bm Y\bm W_1^T - \lambda\bm Y^T\bm W_2 \|_F - y_1\|\bm W_2\|_F \|\bm W_1\|_F^2.\]
Summing up this inequality with \eqref{eq4:lem eb zero} and using the triangle inequality, we obtain 
\begin{align*}
\frac{\sqrt{\lambda}+y_1}{2} \|\nabla G(\bm W)\|_F
&\ge \frac{\sqrt{\lambda}}{2}\| \nabla_{1} G(\bm{W}) \|_F + \frac{y_1}{2} \left\|\nabla_{2} G(\bm{W})\right\|_F\\ & \ge  \sqrt{\lambda}\left\|\bm W_1\left(\lambda\bm I - \bm Y^T\bm Y \right)  \right\|_F - \|\bm W_1\|_F\|\bm W_2\|_F\left(y_1\|\bm W_1\|_F + \sqrt{\lambda}\|\bm W_2\|_F \right) \\
& \ge  \sqrt{\lambda} \min\left\{ \min_{i \in [s_{p_Y}]}\left| \lambda - y_i^2 \right|, \lambda \right\} \|\bm W_1\|_F - \left(\sqrt{\lambda} + y_1 \right)\|\bm W_1\|_F^3 \\
& {\ge} \frac{\sqrt{\lambda} }{2} \min\left\{ \min_{i \in [s_{p_Y}]}\left| \lambda - y_i^2 \right|, \lambda \right\}  \|\bm W_1\|_F, \\
& \ge \frac{\sqrt{\lambda} }{4} \min\left\{ \min_{i \in [s_{p_Y}]}\left| \lambda - y_i^2 \right|, \lambda \right\}  \mathrm{dist}(\bm W, \mathcal{W}_{\bm 0} )
\end{align*}
where the third inequality follows from \eqref{eq:AS L=2} and $\bm Y = \bm \Sigma_Y$ defined in \eqref{eq:SY}, $\|\bm A \bm B\|_F \ge \sigma_{\min}(\bm A)\|\bm B\|_F$, and 
$\|\bm W_1\|_F \ge \|\bm W_2\|_F$, and the fourth inequality uses $\|\bm W\|_F \le \epsilon_{\bm 0}$ and \eqref{def: epsilon_kappa1}. 
This directly implies \eqref{eq:eb 0}.

Next, we focus on $L\ge 3$. According to \Cref{prop:opti G} and \eqref{set:W sigma}, we have $\mathcal{W}_{\bm 0} = \{(\bm 0, \bm 0, \dots, \bm 0)\}$. Then, we have
$ \mathrm{dist}^2(\bm W,\mathcal{W}_{\bm 0}) = \sum_{l=1}^L \|\bm W_l\|_F^2\le \epsilon_{\bm 0}. $
Let $k \in [L]$ be such that $k \in \argmax_{l\in [L]} \|\bm W_l\|_F$. Then, we compute
\begin{align*}
\frac{1}{2}\left\|\nabla_{k} G(\bm W)\right\|_F & \overset{\eqref{eq:grad G}}{=} \left\| \bm W_{L:k+1}^T \left(\bm W_{L:1} - \sqrt{\lambda}\bm Y \right)\bm W_{k-1:1}^T + \lambda \bm W_k \right\|_F \\
&\ge \lambda \|\bm W_k\|_F - \left( \|\bm W_k\|_F^{2L-1} + \sqrt{\lambda}y_1  \|\bm W_k\|_F^{L-1} \right) \\
& = \left( \lambda - \|\bm W_k\|_F^{2L-2} - \sqrt{\lambda}y_1\|\bm W_k\|_F^{L-2} \right) \|\bm W_k\|_F  \overset{\eqref{def: epsilon_kappa1}}{\ge}  \frac{\lambda}{3}\|\bm W_k\|_F.  
\end{align*}
Thus, we have $ \mathrm{dist}^2(\bm W,\mathcal{W}_{\bm 0}) \le L\|\bm W_k\|_F^2 \le \frac{9L}{4\lambda^2}\left\|\nabla_{k} G(\bm W)\right\|_F^2,$
which directly implies the desired result. 
\end{proof} 

According to this proposition, it suffices to consider $\bm \sigma^* \neq \bm 0$ from now on. Let  $\bm \sigma^* \in \mathcal{A}_{\rm sort}\setminus\{\bm 0 \}$ be arbitrary and define 
\begin{align}\label{def:r sigma}
\sigma_{\min}^* := \min\left\{\sigma_i^* \neq 0: i \in [d_{\min}] \right\},\ \sigma_{\max}^* := \max\left\{\sigma_i^*: i \in [d_{\min}] \right\},\ \text{and}\ \ r_{\sigma} := \|\bm \sigma^*\|_0. 
\end{align} 
By the definition of \(\delta_\sigma\) in \eqref{set:Y} and $\bm 0 \in \mathcal Y$, we have \(\delta_\sigma \leq \sigma^*_{\min}\) for each $\bm \sigma^* \in \mathcal{A}_{\rm sort}$. Then, we show the following results. 
\begin{lemma}\label{lem:pre}
Suppose that Assumption~\ref{AS:1} holds. Let $\bm W = (\bm W_1,\dots,\bm W_L)$ and $\bm \sigma^* \in \mathcal{A}_{\rm sort}\setminus\{\bm 0 \}$ 
be arbitrary such that  
$\mathrm{dist}(\bm W, \mathcal{W}_{\bm \sigma^*}) < {\sigma^*_{\min}}/{2}$.
Then the following statements hold:
\begin{flalign}
& \text{(i)}\quad
\frac{\sigma_{\max}^*}{2}  \le \|\bm W_l\| \le \frac{3\sigma_{\max}^*}{2},\
\sigma_{i}(\bm W_l) \ge \frac{\sigma_{\min}^*}{2},\ 
\forall i \in [r_{\sigma}],\ l \in [L]. && \label{eq:sigmalowerbound}\\
& \text{(ii)}\quad
\|\bm W_{l+1}^T \bm W_{l+1} - \bm W_l\bm W_l^T\|_F 
\le \frac{3\sqrt{2}\sigma_{\max}^*}{4\lambda} \|\nabla G(\bm W)\|_F,\ 
\forall l \in [L-1]. && \label{eq:balance}\\
& \text{(iii)}\quad
|\sigma_i (\bm W_l) - \sigma_i(\bm W_{l+1})|
\le \frac{3\sqrt{2}\sigma_{\max}^*}{4\lambda\sigma_{\min}^*}\|\nabla G(\bm W)\|_F,\ 
\forall i \in [r_{\sigma}],\ l \in [L-1]. && \label{eq:wl+1}\\
& \text{(iv)}\quad 
\left\|\bm W_{j:i}^T \bm W_{j:i}  -  (\bm W^T_i\bm W_i)^{j-i+1}\right\|_F 
\le \frac{(j-i)(j-i+1)}{2\sqrt{2}\lambda}
   \left(\frac{3\sigma_{\max}^*}{2}\right)^{2j-2i+1}\|\nabla G(\bm W)\|_F,\ \forall i \le j, \label{eq:transwji}\\
&\qquad\ \ \left\|\bm W_{i:j} \bm W_{i:j}^T - (\bm W_i\bm W_i^T)^{i-j+1}\right\|_F
\le \frac{(i-j)(i-j+1)}{2\sqrt{2}\lambda}
   \left(\frac{3\sigma_{\max}^*}{2}\right)^{2i-2j+1}\|\nabla G(\bm W)\|_F,\ \forall i > j. && \label{eq:transwij}
\end{flalign}
\end{lemma}
 
\begin{proof}
    (i) Let $\bm W^* = (\bm W_1^*,\dots,\bm W_L^*) \in \mathcal{W}_{\bm \sigma^*}$ be such that $\mathrm{dist}(\bm W, \mathcal{W}_{\bm \sigma^*}) = \|\bm W - \bm W^*\|_F$. Using the triangle inequality, we have for each $l \in [L]$, 
    \begin{align*}
    \|\bm W_l\| \ge \|\bm W_l^*\| - \|\bm W_l - \bm W_l^*\| \ge \|\bm W_l^*\| - \|\bm W_l - \bm W_l^*\|_F \ge \sigma^*_{\max} - \frac{\sigma^*_{\min}}{2} \ge  \frac{\sigma^*_{\max}}{2},
    \end{align*}
    where the third inequality follows from \Cref{prop:opti G} and $\mathrm{dist}(\bm W, \mathcal{W}_{\bm \sigma^*}) <  {\sigma^*_{\min}}/{2}$. Similarly, we have $\|\bm W_l\| \le 3 \sigma^*_{\max} /2$ for each $l\in [L]$. 
    Using Weyl's inequality  (see, e.g., \cite[Corollary 7.3.5]{horn2012matrix}), we have $\left|\sigma_i(\bm W_l) - \sigma_i^*\right| \leq \|\bm W_l - \bm W_l^*\| \leq  \sigma_{\min}^*/{2}$ for all $i \in [r_{\sigma}]$ and $l \in [L]$. This implies $ \sigma_i(\bm W_l) \geq \sigma_i^* - {\sigma_{\min}^*}/{2} \geq {\sigma_{\min}^*}/{2}.$\\
    (ii) According to \eqref{eq:grad G}, we compute 
    \begin{align*}
        &  \nabla_{l} G(\bm W) \bm W_l^T = 2\bm W_{L:l+1}^T \left(\bm W_{L:1} - \sqrt{\lambda}\bm Y\right) \bm W_{l:1}^T + 2\lambda \bm W_l \bm W_l^T, \\
        &  \bm W_{l+1}^T \nabla_{l+1} G(\bm W) = 2\bm W_{L:l+1}^T \left(\bm W_{L:1} - \sqrt{\lambda}\bm Y\right) \bm W_{l:1}^T + 2\lambda \bm W_{l+1}^T \bm W_{l+1}.
    \end{align*}
    Then, we have $\left(\nabla_{l} G(\bm W) \bm W_l^T - \bm W_{l+1}^T \nabla_{l+1} G(\bm W)\right)/2 = \lambda \left( \bm W_l \bm W_l^T - \bm W_{l+1}^T \bm W_{l+1} \right)$ for each $l\in [L-1]$. This implies
    \begin{align*}
        \left\| \bm W_l \bm W_l^T - \bm W_{l+1}^T \bm W_{l+1} \right\|_F & = \frac{1}{2\lambda}\left\|  \nabla_{l} G(\bm W) \bm W_l^T - \bm W_{l+1}^T \nabla_{l+1} G(\bm W) \right\|_F \\
        & \le  \frac{3\sigma_{\max}^*}{4\lambda}  \left( \|\nabla_{l} G(\bm W)\|_F +  \|\nabla_{l+1} G(\bm W)\|_F \right)  \le  \frac{3\sqrt{2}\sigma_{\max}^*}{4\lambda}  \|\nabla G(\bm W)\|_F,
    \end{align*}
    where the first inequality follows from the triangle inequality, $\|\bm A\bm B\|_F \le \|\bm A\| \|\bm B\|_F$, and \eqref{eq:sigmalowerbound}.  
    
    (iii) 
    Using Weyl's inequality and \eqref{eq:balance}, we obtain that for each $l \in [L]$ and $i \in [r_{\sigma}]$, 
        \[
        \left| \sigma_i^2(\bm W_l) -\sigma_i^2(\bm W_{l+1})\right| \leq \left\| \bm W_l \bm W_l^T - \bm W_{l+1}^T \bm W_{l+1} \right\|_F \leq \frac{3\sqrt{2}\sigma_{\max}^*}{4\lambda} \|\nabla G(\bm W)\|_F.
        \]
    This implies
        \begin{align*}
     \left|\sigma_i(\bm W_l) - \sigma_i(\bm W_{l+1}) \right|  \leq \frac{3\sqrt{2}\sigma_{\max}^*}{4\lambda\left( \sigma_i(\bm W_l) + \sigma_i(\bm W_{l+1}) \right)} \|\nabla G(\bm W)\|_F  \overset{\eqref{eq:sigmalowerbound}}{\leq} \frac{3\sqrt{2}\sigma_{\max}^*}{4\lambda\sigma_{\min}^*}\|\nabla G(\bm W)\|_F.
        \end{align*}

 (iv)  We first prove \eqref{eq:transwji}. Note that 
    \begin{align}\label{eq1:lem pushl}
    \textstyle \bm W_{j:i}^T \bm W_{j:i} - (\bm W^T_i\bm W_i)^{j-i+1}   
     = \sum_{k=1}^{j-i} \bm W_{j-k:i}^T\left( (\bm W_{j-k+1}^T\bm W_{j-k+1})^k - (\bm W_{j-k}\bm W_{j-k}^T)^{k} \right) \bm W_{j-k:i}. 
    \end{align}
    For each $k \in [j-i]$, we compute
    \begin{align*}
        & \left\|\bm W_{j-k:i}^T\left( (\bm W_{j+1-k}^T\bm W_{j+1-k})^k - (\bm W_{j-k}\bm W_{j-k}^T)^k \right) \bm W_{j-k:i}\right\|_F \\
       \le& \left\| (\bm W_{j+1-k}^T\bm W_{j+1-k})^k - (\bm W_{j-k}\bm W_{j-k}^T)^k \right\|_F \prod_{l=i}^{j-k} \|\bm W_l\|^2 \\
       \le&  k\left(\frac{3\sigma_{\max}^*}{2}\right)^{2(j-i)}\left\|\bm W_{j+1-k}^T\bm W_{j+1-k} - \bm W_{j-k}\bm W_{j-k}^T\right\|_F\\
       \overset{\eqref{eq:balance}}{\le}&  k\left(\frac{3\sigma_{\max}^*}{2}\right)^{2(j-i)} \frac{3\sqrt{2}\sigma_{\max}^*}{4\lambda}  \|\nabla G(\bm W)\|_F\\
       &= \frac{\sqrt{2} k}{2\lambda}\left(\frac{3\sigma_{\max}^*}{2}\right)^{2(j-i)+1}\|\nabla G(\bm W)\|_F,
    \end{align*}
    where the second inequality follows from  
\begin{align*}
&\left\|(\bm W_{j+1-k}^T \bm W_{j+1-k})^k - (\bm W_{j-k}\bm W_{j-k}^T)^k\right\|_F \\
=\ &
 \left\|\sum_{l=1}^k (\bm W_{j+1-k}^T \bm W_{j+1-k})^{k-l} (\bm W_{j+1-k}^T \bm W_{j+1-k} - \bm W_{j-k} \bm W_{j-k}^T) (\bm W_{j-k}\bm W_{j-k}^T )^{l-1}\right\|_F\\
\le\ & \left\| \bm W_{j+1-k}^T\bm W_{j+1-k} - \bm W_{j-k}\bm W_{j-k}^T \right\|_F \sum_{l=1}^k \|\bm W_{j+1-k}\|^{2(k-l)}\|\bm W_{j-k}\|^{2(l-1)}\\
\overset{\eqref{eq:sigmalowerbound}}{\le}\ &  k\left( \frac{3\sigma_{\max}^*}{2} \right)^{2(k-1)}\left\| \bm W_{j+1-k}^T\bm W_{j+1-k} - \bm W_{j-k}\bm W_{j-k}^T \right\|_F.
\end{align*}
This, together with \eqref{eq1:lem pushl}, yields
\begin{align*}
   \left\|\bm W_{j:i}^T \bm W_{j:i} - (\bm W^T_i\bm W_i)^{j-i+1}\right\|_F & \leq \left(\sum_{k=1}^{j-i} k\right) \frac{\sqrt{2}}{2\lambda}\left(\frac{3\sigma_{\max}^*}{2}\right)^{2j-2i+1}\|\nabla G(\bm W)\|_F  \\
& \le \frac{(j-i)(j-i+1)}{2\sqrt{2}\lambda}\left(\frac{3\sigma_{\max}^*}{2}\right)^{2j-2i+1}\|\nabla G(\bm W)\|_F.
\end{align*}
Using the same argument, we can prove \eqref{eq:transwij}.  
\end{proof}


\begin{coro}\label{coro1}
Under the setting of \Cref{lem:pre}, we have 
\begin{align}\label{eq:coro1}
\left\| (\bm W_l \bm W^T_l)^{L-1} \bm W_l - \sqrt{\lambda}\bm W_{L:l+1}^T \bm Y \bm W_{l-1:1}^T + \lambda \bm W_l\right\|_F \le c_1 \|\nabla G(\bm W)\|_F,\ \forall l \in [L], 
\end{align}
where 
\begin{align}\label{eq:c1} 
  c_1 := \max_{l\in [L]} \left\{ \left(\frac{3\sigma^*_{\max}}{2}\right)^{2L-2}\frac{(L-l)(L-l+1)+(l-1)l}{2\sqrt{2}\lambda}+\frac{1}{2} \right\}. 
\end{align} 
\end{coro}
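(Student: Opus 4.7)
The strategy is to use the gradient formula to eliminate the data-dependent term $\sqrt{\lambda}\bm W_{L:l+1}^T\bm Y\bm W_{l-1:1}^T$, and then telescope the remaining residual into pieces controlled by \Cref{lem:pre}(ii) and (iv).

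First, I would rearrange \eqref{eq:grad G} to obtain
\[
\sqrt{\lambda}\,\bm W_{L:l+1}^T \bm Y \bm W_{l-1:1}^T = \bm W_{L:l+1}^T \bm W_{L:1} \bm W_{l-1:1}^T + \lambda \bm W_l - \tfrac{1}{2}\nabla_{\bm W_l} G(\bm W),
\]
so that the quantity inside the norm on the left of \eqref{eq:coro1} equals
\[
\bigl[(\bm W_l \bm W_l^T)^{L-1}\bm W_l - \bm W_{L:l+1}^T \bm W_{L:1} \bm W_{l-1:1}^T\bigr] + \tfrac{1}{2}\nabla_{\bm W_l} G(\bm W).
\]
The last term contributes at most $\tfrac{1}{2}\|\nabla G(\bm W)\|_F$, matching the $+\tfrac{1}{2}$ in $c_1$, so it remains to bound the bracketed difference in Frobenius norm.

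Second, I would symmetrize about $\bm W_l$: via $\bm W_l(\bm W_l^T \bm W_l)^{l-1} = (\bm W_l\bm W_l^T)^{l-1}\bm W_l$, rewrite $(\bm W_l \bm W_l^T)^{L-1}\bm W_l = (\bm W_l \bm W_l^T)^{L-l}\,\bm W_l\,(\bm W_l^T \bm W_l)^{l-1}$ and factor $\bm W_{L:l+1}^T \bm W_{L:1} \bm W_{l-1:1}^T = (\bm W_{L:l+1}^T \bm W_{L:l+1})\,\bm W_l\,(\bm W_{l-1:1} \bm W_{l-1:1}^T)$. Inserting the intermediate term $(\bm W_{L:l+1}^T \bm W_{L:l+1})\,\bm W_l\,(\bm W_l^T \bm W_l)^{l-1}$ and applying the triangle inequality with the bound $\|\bm W_l\| \le 3\sigma_{\max}^*/2$ from \eqref{eq:S norm W} reduces the problem to bounding
\[
\|\bm W_{L:l+1}^T \bm W_{L:l+1} - (\bm W_l \bm W_l^T)^{L-l}\|_F \cdot \left(\tfrac{3\sigma_{\max}^*}{2}\right)^{2l-1} + \|(\bm W_l^T \bm W_l)^{l-1} - \bm W_{l-1:1}\bm W_{l-1:1}^T\|_F \cdot \left(\tfrac{3\sigma_{\max}^*}{2}\right)^{2(L-l)+1}.
\]

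Third, each residual splits further via the triangle inequality into a telescoping piece and a balancing piece. For the first residual, insert $(\bm W_{l+1}^T\bm W_{l+1})^{L-l}$: the inequality \eqref{eq:transwji} controls the telescoping piece with coefficient $(L-l-1)(L-l)$, while the balancing piece
\[
(\bm W_{l+1}^T\bm W_{l+1})^{L-l} - (\bm W_l\bm W_l^T)^{L-l} = \sum_{i=0}^{L-l-1}(\bm W_{l+1}^T\bm W_{l+1})^i \bigl(\bm W_{l+1}^T\bm W_{l+1} - \bm W_l\bm W_l^T\bigr)(\bm W_l\bm W_l^T)^{L-l-1-i}
\]
is estimated via \eqref{eq:balance} and \eqref{eq:S norm W}, yielding $L-l$ summands whose combined coefficient works out to $2(L-l)$, owing to the precise ratio between the constant $3\sqrt{2}\sigma_{\max}^*/(4\lambda)$ in \eqref{eq:balance} and the per-step factor $(3\sigma_{\max}^*/2)/(2\sqrt{2}\lambda)$ in \eqref{eq:transwji}. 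Summing gives $(L-l-1)(L-l)+2(L-l) = (L-l)(L-l+1)$; a symmetric argument using \eqref{eq:transwij} yields coefficient $(l-1)l$ for the other residual. Multiplying by the respective $(3\sigma_{\max}^*/2)$-prefactors collapses the powers into the common exponent $2L-2$, which together with the $\tfrac{1}{2}\|\nabla G(\bm W)\|_F$ contribution from the first step yields exactly $c_1\|\nabla G(\bm W)\|_F$. The main obstacle is this bookkeeping, in particular verifying that the factor of two emerging in the balancing step combines with the coefficients from \eqref{eq:transwji}--\eqref{eq:transwij} to collapse perfectly into $(L-l)(L-l+1)$ and $(l-1)l$; a minor subsidiary point is that \eqref{eq:transwji}--\eqref{eq:transwij} are stated in spectral norm whereas a Frobenius bound is needed on the output, which is harmless because the underlying telescoping argument naturally produces a Frobenius estimate.
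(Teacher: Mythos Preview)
Your proposal is correct and follows essentially the same approach as the paper: rearrange the gradient identity to isolate the target expression plus a residual $\bm W_{L:l+1}^T\bm W_{L:l+1}\,\bm W_l\,\bm W_{l-1:1}\bm W_{l-1:1}^T - (\bm W_l\bm W_l^T)^{L-1}\bm W_l$, insert an intermediate term, and bound the two resulting pieces (the paper's $\bm R_1(l),\bm R_2(l)$) by combining \eqref{eq:transwji}--\eqref{eq:transwij} with a one-step balancing estimate from \eqref{eq:balance}. Your bookkeeping, including the factor-of-two cancellation yielding $(L-l)(L-l+1)$ and $(l-1)l$, and your remark that the underlying argument for \eqref{eq:transwji}--\eqref{eq:transwij} actually delivers a Frobenius-norm bound, both match the paper's computation exactly.
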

\begin{proof}
 For ease of exposition, let 
     \begin{align*}
         & \bm R_1(l) := \bm W_{l-1:1} \bm W_{l-1:1}^T - (\bm W_l^T\bm W_l)^{l-1},\ l =2,3,\dots,L,& \bm R_1(1) := \bm 0, \\
         & \bm R_2(l) := \bm W_{L:l+1}^T \bm W_{L:l+1}  -  (\bm W_l\bm W_l^T)^{L-l},\ l=1,2,\dots,L-1,& \bm R_2(L) := \bm 0.  
     \end{align*}
For each $l \in [L]$, we compute
\begin{align*}
         & \left\|(\bm W_{l-1}\bm W_{l-1}^T)^{l-1}- (\bm W_l^T\bm W_l)^{l-1}\right\|_F \\
         \le \quad&\left\|(\bm W_{l-1}\bm W_{l-1}^T)^{l-2}(\bm W_{l-1}\bm W_{l-1}^T -\bm W_l^T\bm W_l ) \right\|_F + \cdots   +\left\| (\bm W_{l-1}\bm W_{l-1}^T -\bm W_l^T\bm W_l )(\bm W_l^T\bm W_l)^{l-2} \right\|_F \\
         \overset{(\ref{eq:sigmalowerbound}, \ref{eq:balance})}{\le} & \frac{3\sqrt{2}(l-1)\sigma_{\max}^*}{4\lambda} \left(\frac{3\sigma_{\max}^*}{2}\right)^{2l-4} \|\nabla G(\bm W)\|_F.
\end{align*} 
This, together with the triangle inequality and \eqref{eq:transwij}, yields that for $l =2,3,\dots,L$, {\small
\begin{align}\label{eq1:coro1}
\|\bm R_1(l)\|_F \le &\ \left\|\bm W_{l-1:1} \bm W_{l-1:1}^T\! -\!(\bm W_{l-1}\bm W_{l-1}^T)^{l-1} \right\|_F\! +\! \left\| (\bm W_{l-1}\bm W_{l-1}^T)^{l-1}\!\!\!-\! (\bm W_l^T\bm W_l)^{l-1}\right\|_F \notag \\
\le &\ \frac{l(l-1)}{2\sqrt{2}\lambda} \left(\frac{3\sigma_{\max}^*}{2}\right)^{2l-3} \|\nabla G(\bm W)\|_F.
\end{align}    
}
Using the same argument and \eqref{eq:transwji}, we have for $l = 1,2,\dots,L-1$, 
\begin{align}\label{eq2:coro1}
 \|\bm R_2(l)\|_F \le \frac{(L-l+1)(L-l)}{2\sqrt{2}\lambda} \left(\frac{3\sigma_{\max}^*}{2}\right)^{2(L-l)-1} \|\nabla G(\bm W)\|_F.
\end{align}
For each $l =1,2,\ldots L$, we compute  
     \begin{align*}
     \frac{1}{2}\nabla_{l} G(\bm W) \overset{\eqref{eq:grad G}}{=} &\bm W_{L:l+1}^T(\bm W_L\cdots\bm W_1- \sqrt{\lambda}\bm Y) \bm W_{l-1:1}^T + \lambda\bm W_l     \\
    =~ & (\bm W_l\bm W^T_l)^{L-1}\bm W_l- \sqrt{\lambda}\bm W_{L:l+1}^T \bm Y \bm W_{l-1:1}^T \\
    &+  \lambda\bm W_l + 
     \bm R_2(l)\bm W_l\bm W_{l-1:1}\bm W_{l-1:1}^T + \left(\bm W_l\bm W_l^T\right)^{L-l}\bm W_l\bm R_1(l).
     \end{align*}
     This, together with the triangle inequality, \eqref{eq:sigmalowerbound}, \eqref{eq1:coro1}, and \eqref{eq2:coro1}, yields for each $l \in [L]$, 
        \begin{align*}
            &\left\| (\bm W_l \bm W^T_l)^{L-1} \bm W_l - \sqrt{\lambda}\bm W_{L:l+1}^T \bm Y \bm W_{l-1:1}^T + \lambda \bm W_l\right\|_F \notag \\
            \leq\ &\textstyle  \left\|\bm R_2(l)\bm W_l \bm W_{l-1:1}\bm W_{l-1:1}^T\right\|_F+\left\|(\bm W_{l}\bm W_{l}^T)^{L-l}\bm W_l\bm R_1(l)\right\|_F + \frac{1}{2}\|\nabla G(\bm W)\|_F \notag \\
            \leq\ &  \left( \left(\frac{3\sigma^*_{\max}}{2}\right)^{2L-2}\frac{(L-l)(L-l+1)+(l-1)l}{2\sqrt{2}\lambda}+\frac{1}{2} \right) \|\nabla G(\bm W)\|_F,
        \end{align*}
    which directly implies \eqref{eq:coro1}. 
\end{proof}

To handle the repeated singular values in $\bm \sigma^* \in \mathcal{A}_{\rm sort}\setminus \{\bm 0\}$, let $p \ge 1$ denote the number of distinct positive singular values. Recall from \eqref{def:r sigma} that $r_{\sigma}$ denotes the number of positive singular values of $\bm \sigma^*$. Then there exist indices $t_0,t_1,\cdots,t_{p}$ such that 
$0 = t_0 < t_1<\cdots < t_p = r_{\sigma} $ and 
\begin{equation}\label{eq:partsigma}
\sigma_{t_0+1}^* =\dots = \sigma_{t_1}^* >\sigma_{t_1+1}^*=\dots =\sigma_{t_2}^* > \dots>\sigma_{t_{p-1}+1}^*=\dots =\sigma_{t_{p}}^*>0.
\end{equation} 
Let $g_i := t_i - t_{i-1}$ be the multiplicity of the $i$-th largest positive value of $\bm \sigma^*$ for each $i\in [p]$ and $g_{\max} := \max\{g_i:i \in [p]\}$. Obviously, we have $r_{\sigma}= \sum_{i=1}^p g_i$ . Moreover, let 
{\small
\begin{align}\label{eq:SVD Wl}
\bW_l &= \bU_l\bm{\Sigma}_l\bV_l^T \notag\\
&= \begin{bmatrix}
    \bm U_{l}^{(1)} &\!\!\!\! \dots\!\!\! &  \bm U_{l}^{(p)} &\!\!\!\! \bm U_{l}^{(p+1)} 
\end{bmatrix}\mathrm{BlkD}\left(\bm \Sigma_{l}^{(1)},\dots,\bm \Sigma_{l}^{(p)},\bm \Sigma_{l}^{(p+1)} \right)
{\begin{bmatrix}
    \bm V_{l}^{(1)} &\!\!\!\!\! \!\dots\!\!\!\!\!&\!  \bm V_{l}^{(p)} &\!\!\!\!\!\! \bm V_{l}^{(p+1)} 
\end{bmatrix}}^T
\end{align}
}
be an SVD of $\bm W_l$ for each $l \in [L]$, where $\bm U_l \in \mathcal{O}^{d_l}$ with $\bm U_{l}^{(i)} \in \R^{d_l \times g_i}$ for each $i \in [p]$ and $\bm U_{l}^{(p+1)} \in \R^{d_l \times (d_l-r_{\sigma})}$, $\bm \Sigma_l \in \mathbb{R}^{d_l\times d_{l-1}}$ with decreasing singular values, $\bm \Sigma_{l}^{(i)} \in \R^{g_i \times g_i}$ for each $i \in [p]$, and $\bm \Sigma_{l}^{(p+1)} \in \R^{(d_l-r_{\sigma})  \times (d_{l-1}-r_{\sigma})}$, and $\bm V_l \in \mathcal{O}^{d_{l-1}}$ with $\bm V_{l}^{(i)} \in \R^{d_{l-1} \times g_i}$ for each $i \in [p]$ and $\bm V_{l}^{(p+1)} \in \R^{d_{l-1} \times (d_{l-1} - r_{\sigma})}$. 

With the above setup, we are ready to show that, for any point in the neighborhood of the set of critical points, the product of $\bm U_{l-1}$ and $\bm V_{l}$ is close to a block diagonal matrix with orthogonal diagonal blocks. Recall that $\delta_{\sigma}$  and $\mathcal{A}_{\rm sort}$ are defined in \eqref{set:Y} and \eqref{set:W sigma}, respectively. 
\begin{prop}\label{prop:Hlapproximate}
Suppose that Assumption~\ref{AS:1} holds. Let $\bm W = (\bm W_1,\dots,\bm W_L)$ and $ \bm \sigma^* \in \mathcal{A}_{\rm sort}\setminus\{\bm 0 \}$ be arbitrary such that 
\begin{align}\label{eq:dist 2}
    \mathrm{dist}(\bm W, \mathcal{W}_{\bm \sigma^*}) \le   \frac{\delta_{\sigma}}{3}. 
\end{align}  
For each $l \in \{2,\dots,L\}$, there exist matrices $\bm T_{l}^{(i)} \in \mathcal{O}^{g_i}$ for all $i \in [p]$ and $\bm T_{l}^{(p+1)} \in \mathcal{O}^{d_{l-1} - r_{\sigma}}$ such that
\begin{align}\label{eq:Hlapproximate}
  \left\| \bm U_{l-1}^T\bm V_l - \mathrm{BlkD}\left(\bm T_{l}^{(1)},\dots,\bm T_{l}^{(p)},\bm T_{l}^{(p+1)} \right)\right\|_F \le \frac{9\sigma_{\max}^{*}}{ 4\delta_{\sigma}\lambda\sigma^{*}_{\min}}\|\nabla G(\bm W)\|_F. 
\end{align}
where $\bm U_l$ and $\bm V_l$ for each $l \in [L]$ are defined in \eqref{eq:SVD Wl}. 
\end{prop}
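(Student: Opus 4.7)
The plan is to propagate the approximate balancedness in \Cref{lem:pre}(ii) through the SVDs in \eqref{eq:SVD Wl}, converting near-commutation of the orthogonal matrix $\bm H := \bm U_{l-1}^T\bm V_l$ with a diagonal matrix of squared singular values into near-block-diagonality. At an exact critical point the identity $\bm W_{l-1}\bm W_{l-1}^T=\bm W_l^T\bm W_l$ forces $\bm H$ to commute with the common diagonal $\mathrm{diag}((\sigma_1^*)^2,\dots,(\sigma_{d_{l-1}}^*)^2)$, and by the block pattern \eqref{eq:partsigma} this means $\bm H$ is block diagonal with orthogonal blocks of sizes $g_1,\dots,g_p,\,d_{l-1}-r_\sigma$. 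I will carry out the perturbed version of this statement in three steps.

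First I would apply \Cref{lem:pre}(ii) with indices shifted to $(l-1,l)$ to obtain $\|\bm W_{l-1}\bm W_{l-1}^T-\bm W_l^T\bm W_l\|_F\le \tfrac{3\sqrt{2}\sigma_{\max}^*}{4\lambda}\|\nabla G(\bm W)\|_F$. Substituting the SVDs from \eqref{eq:SVD Wl} on both sides and conjugating by $\bm U_{l-1}^T$ on the left and $\bm V_l$ on the right converts this into the Sylvester-type bound
\begin{align*}
\|\bm D_{l-1}\bm H-\bm H\bm D_l\|_F\le \frac{3\sqrt{2}\sigma_{\max}^*}{4\lambda}\|\nabla G(\bm W)\|_F,
\end{align*}
where $\bm D_{l-1}:=\bm\Sigma_{l-1}\bm\Sigma_{l-1}^T$ and $\bm D_l:=\bm\Sigma_l^T\bm\Sigma_l$ are diagonal with $i$-th entries $\sigma_i^2(\bm W_{l-1})$ and $\sigma_i^2(\bm W_l)$, respectively.

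Second, I would exploit a spectral-gap argument to control off-diagonal-block entries of $\bm H$. By Weyl's inequality and \eqref{eq:dist 2}, every $\sigma_i(\bm W_{l-1})$ and $\sigma_i(\bm W_l)$ lies within $\delta_\sigma/3$ of the corresponding entry of $\bm\sigma^*$; using $\delta_\sigma\le\sigma_{\min}^*$ (which holds because $0\in\mathcal{Y}$), for any pair $(i,j)$ with $i$ in singular-value group $a$ and $j$ in group $b\ne a$ one obtains $|\sigma_i(\bm W_{l-1})-\sigma_j(\bm W_l)|\ge\delta_\sigma/3$ and $|\sigma_i(\bm W_{l-1})+\sigma_j(\bm W_l)|\ge\sigma_{\min}^*/3$, so $|\sigma_i^2(\bm W_{l-1})-\sigma_j^2(\bm W_l)|\ge\delta_\sigma\sigma_{\min}^*/9$. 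Since $(\bm D_{l-1}\bm H-\bm H\bm D_l)_{ij}=(\sigma_i^2(\bm W_{l-1})-\sigma_j^2(\bm W_l))\bm H_{ij}$, combining this gap lower bound with the Sylvester estimate gives
\begin{align*}
\Bigl(\sum_{(i,j)\in\mathrm{off}}|\bm H_{ij}|^2\Bigr)^{1/2}\le \frac{9}{\delta_\sigma\sigma_{\min}^*}\cdot\frac{3\sqrt{2}\sigma_{\max}^*}{4\lambda}\|\nabla G(\bm W)\|_F,
\end{align*}
where ``off'' denotes index pairs in different singular-value groups.

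Third, I would upgrade the smallness of the off-block mass to the block-diagonal bound \eqref{eq:Hlapproximate}. Writing $\bm H$ in $(p+1)$-block form, orthogonality of $\bm H$ yields $\bm H^{(i,i)T}\bm H^{(i,i)}=\bm I-\bm E_i$ with $\bm E_i:=\sum_{k\ne i}\bm H^{(k,i)T}\bm H^{(k,i)}\succeq\bm 0$, $\bm E_i\preceq\bm I$, and $\mathrm{Tr}(\bm E_i)=\sum_{k\ne i}\|\bm H^{(k,i)}\|_F^2$. Taking $\bm T_l^{(i)}$ to be the orthogonal polar factor of $\bm H^{(i,i)}$ (so its singular values are $\sqrt{1-\mu_k(\bm E_i)}\in[0,1]$), the elementary inequality $(1-\sqrt{1-\mu})^2\le\mu$ on $[0,1]$ gives $\|\bm H^{(i,i)}-\bm T_l^{(i)}\|_F^2\le\mathrm{Tr}(\bm E_i)$. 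Summing across all $i$ and adding the off-block Frobenius mass yields $\|\bm H-\mathrm{BlkDiag}(\bm T_l^{(1)},\dots,\bm T_l^{(p+1)})\|_F^2\le 2\sum_{\mathrm{off}}|\bm H_{ij}|^2$, and combining with the previous display delivers \eqref{eq:Hlapproximate} up to constants. The main obstacle is the careful bookkeeping needed to match the explicit prefactor $\tfrac{9\sigma_{\max}^*}{4\delta_\sigma\lambda\sigma_{\min}^*}$: both the spectral-gap step (where separating the ``two positive blocks'' case from the ``zero block'' case can sharpen the sum lower bound from $\sigma_{\min}^*/3$ to $2\sigma_{\min}^*/3$) and the polar-decomposition step (where the $\sqrt{2}$ loss can potentially be absorbed by a more refined column/row analysis) contribute numerical factors that need to be shaved simultaneously. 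A secondary subtlety is to verify that each $\bm T_l^{(i)}$ is well-defined as an orthogonal matrix, which amounts to showing $\bm H^{(i,i)}$ is nonsingular; this follows from $\mathrm{Tr}(\bm E_i)<1$ once the neighborhood size $\delta_\sigma/3$ is sufficiently small relative to $\lambda$ and $\sigma_{\min}^*$.
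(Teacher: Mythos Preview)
Your three-step plan---convert approximate balancedness into a Sylvester-type bound, use the spectral gap between singular-value groups to control the off-diagonal-block entries of $\bm H=\bm U_{l-1}^T\bm V_l$, then replace each diagonal block by its orthogonal polar factor---is exactly the paper's argument (carried out blockwise rather than entrywise, which is equivalent since the $\bm\Sigma$'s are diagonal). Two small remarks: the constant you flag as the main obstacle falls out once you sharpen your sum estimate to $\sigma_i(\bm W_{l-1})+\sigma_j(\bm W_l)\ge 2\sigma_{\min}^*/3$ (one of the two indices always lies in a positive group) and run the blockwise inequality $(\sigma_{t_j}^*-\delta_\sigma/3)^2-(\sigma_{t_i}^*+\delta_\sigma/3)^2\ge\tfrac{2}{3}\delta_\sigma\sigma_{\min}^*$ as the paper does; and your nonsingularity concern is unnecessary, since the proposition only asserts existence of some orthogonal $\bm T_l^{(i)}$ and any orthogonal factor from a full SVD of $\bm H^{(i,i)}$ (equivalently, any polar factor) works regardless of rank.
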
 
\begin{proof} 
Let $\bm W^* = (\bm W_1^*,\dots,\bm W_L^*) \in \mathcal{W}_{\bm \sigma^*}$ be such that $\mathrm{dist}(\bm W, \mathcal{W}_{\bm \sigma^*}) = \|\bm W - \bm W^* \|_F$.  For ease of exposition, let $\bm H_l := \bm U_{l-1}^T\bm V_l$ for each $l \in \{2,\dots,L\}$. According to \eqref{eq:SVD Wl}, the $(i,j)$-th block of $\bm H_l$, denoted by $\bm H_{l}^{(i,j)}$, is  
\begin{align}\label{eq:Hlij}
    \bm H_{l}^{(i,j)} := \bm U_{l-1}^{(i)^T}\bm V_{l}^{(j)},\ \forall (i,j) \in [p+1] \times [p+1]. 
\end{align} 
By \eqref{eq:dist 2} with $\delta_{\sigma} \le \sigma_{\min}^*$ and \Cref{lem:pre}, we have \eqref{eq:balance}. Substituting \eqref{eq:SVD Wl} and $\bm H_l = \bm U_{l-1}^T \bm V_l$ into \eqref{eq:balance} yields
\begin{align*}
 \left\| \bm H_l\bm{\Sigma}_l^T\bm{\Sigma}_l -  \bm{\Sigma}_{l-1}\bm{\Sigma}_{l-1}^T\bm H_l  \right\|_F \leq \frac{3\sqrt{2}\sigma^*_{\max}}{4\lambda} \|\nabla G(\bm W)\|_F,\ \forall l = 2,\dots,L.
\end{align*} 
Using this and the block diagonal structures of $\bm \Sigma_l$ in \eqref{eq:SVD Wl} and $\bm H_l$ in \eqref{eq:Hlij}, we obtain
\begin{align}\label{eq1:prop Hla}
    &\ \sum_{j=1}^{p+1} \sum_{i=1, i \neq j}^{p+1} \left\| \bm H_{l}^{(i,j)}\bm \Sigma_{l}^{(j)^T}\bm \Sigma_{l}^{(j)} - \bm \Sigma_{l-1}^{(i)}\bm \Sigma_{l-1}^{(i)^T} \bm H_{l}^{(i,j)} \right\|_F^2 \notag \\
\le &\    \sum_{j=1}^{p+1} \sum_{i=1}^{p+1} \left\| \bm H_{l}^{(i,j)}\bm \Sigma_{l}^{(j)^T}\bm \Sigma_{l}^{(j)} - \bm \Sigma_{l-1}^{(i)}\bm \Sigma_{l-1}^{(i)^T} \bm H_{l}^{(i,j)} \right\|_F^2 \le \left( \frac{3\sqrt{2}\sigma^*_{\max}}{4\lambda}\right)^2  \|\nabla G(\bm W)\|_F^2. 
\end{align} 
For each  $l \in [L]$ and $i \in [p+1]$, we have $\|\bm \Sigma_{l,i} - \sigma^*_{t_i} \bm I\| \le \|\bm W_l - \bm W_l^*\|  \leq \mathrm{dist}(\bm W, \mathcal{W}_{\bm \sigma^*}) \overset{\eqref{eq:dist 2}}{\le} {\delta_{\sigma}}/{3},$ where the first inequality follows from Mirsky's inequality (see \Cref{lem:mirsky} in Appendix~\ref{app:sec auxi}). Using Weyl's inequality, we have $\sigma^*_{t_i} - \frac{\delta_{\sigma}}{3} \le \lambda_{\min}(\bm \Sigma_{l,i}) \le \lambda_{\max}(\bm \Sigma_{l,i}) \le \sigma^*_{t_i} + \frac{\delta_{\sigma}}{3}$. For each $l \in [L]$ and $i, j \in [p+1]$ with $i > j$, we compute
\begin{align}
    & \left\|\bm H_{l}^{(i,j)}\bm \Sigma_{l}^{(j)^T}\bm \Sigma_{l}^{(j)}\right\|_F \ge \lambda_{\min}^2(\bm \Sigma_{l}^{(j)})\|\bm H_{l}^{(i,j)}\|_F \ge  \left(  \sigma^*_{t_j} - \frac{\delta_{\sigma}}{3} \right)^2 \|\bm H_{l}^{(i,j)}\|_F, \label{eq2:prop Hla}\\
    & \left\|\bm \Sigma_{l-1}^{(i)}\bm \Sigma_{l-1}^{(i)^T}\bm H_{l}^{(i,j)}\right\|_F^2 \le \lambda_{\max}\left( \bm \Sigma_{l-1}^{(i)}\bm \Sigma_{l-1}^{(i)^T} \right)\|\bm H_{l}^{(i,j)}\|_F \le \left(  \sigma^*_{t_i} + \frac{\delta_{\sigma}}{3} \right)^2\|\bm H_{l}^{(i,j)}\|_F. \label{eq3:prop Hla}
\end{align}
For each $l \in [L]$ and  $i > j \in [p+1]$, we bound
\begin{align*}
&\quad \left\| \bm H_{l}^{(i,j)}\bm \Sigma_{l}^{(j)^T}\bm \Sigma_{l}^{(j)} - \bm \Sigma_{l-1}^{(i)}\bm \Sigma_{l-1}^{(i)^T} \bm H_{l}^{(i,j)} \right\|_F \ge \left\| \bm H_{l}^{(i,j)}\bm \Sigma_{l}^{(j)^T}\bm \Sigma_{l}^{(j)}\right\|_F - \left\|\bm \Sigma_{l-1}^{(i)}\bm \Sigma_{l-1}^{(i)^T} \bm H_{l}^{(i,j)}\right\|_F \\
& \overset{(\ref{eq2:prop Hla},\ref{eq3:prop Hla})}{\ge}  \left(  \left(  \sigma^*_{t_j} - \frac{\delta_{\sigma}}{3} \right)^2 - \left(  \sigma^*_{t_i} + \frac{\delta_{\sigma}}{3} \right)^2 \right) \|\bm H_{l}^{(i,j)}\|_F 
\overset{\eqref{set:Y}}{\ge} \frac{2\delta_{\sigma} \sigma^*_{\min}}{3}\|\bm H_{l}^{(i,j)}\|_F. 
\end{align*}
Applying the same argument to the case where $i < j$, we obtain the same result as above. This, togher with \eqref{eq1:prop Hla}, yields the following bound for off-diagonal blocks of $\bm H_{l}$:
\begin{align}\label{eq4:prop Hla}  
\sum_{j=1}^{p+1} \sum_{ i=1, i \neq j}^{p+1} \|\bm H_{l}^{(i,j)}\|_F^2  \le \frac{81\sigma_{\max}^{*2}}{32\delta_{\sigma}^2\lambda^2\sigma^{*2}_{\min}}\|\nabla G(\bm W)\|_F^2. 
\end{align}

For each $l \in [L]$ and $i\in [p+1]$, let $\bm H_{l}^{(i,i)} = \bm P_{l}^{(i)}\bm \Lambda_{l}^{(i)}\bm Q_{l}^{(i)^T}$ be a full SVD of $\bm H_{l}^{(i,i)}$, where $\bm P_{l}^{(i)},\ \bm Q_{l}^{(i)}$ are square orthogonal matrix. Then, we compute
\begin{align*}
 \left\|\bm H_{l}^{(i,i)}- \bm P_{l}^{(i)}\bm Q_{l}^{(i)^T}\right\|_F^2 & = \sum_{k=1}^{g_i} \left( 1 - \sigma_k(\bm H_{l}^{(i,i)}) \right)^2 \le \sum_{k=1}^{g_i} \left( 1 - \sigma_k^2(\bm H_{l}^{(i,i)}) \right)^2   \\
 &  = \left\| \bm I -  \bm H_{l}^{(i,i)}\bm H_{l}^{(i,i)^T} \right\|_F^2 = \sum_{j=1, j\neq i}^{p + 1} \|\bm H_{l}^{(i,j)}\bm H_{l}^{(i,j)^T}\|_F^2 \le  \sum_{j=1,j\neq i}^{p + 1} \|\bm H_{l}^{(i,j)}\|_F^2,
\end{align*}  
where the last equality is due to 
$[\bm H_{l}^{(i,1)},\dots,\bm H_{l}^{(i,p+1)}] = \bm U_{l-1}^{(i)^T}\bm V_l$ with $\bm U_{l-1},\ \bm V_l \in \mathcal{O}^{d_{l-1}}$,
and last inequality follows from $\|\bm H_{l}^{(i,j)}\| \le 1$ for all $l\in [L]$ and $i,j \in [p+1]$. Therefore, we obtain 
\begin{align*}
\sum_{i=1}^{p+1} \left\|\bm H_{l}^{(i,i)}- \bm P_{l}^{(i)}\bm Q_{l}^{(i)^T}\right\|_F^2 \le \sum_{i=1}^{p+1} \sum_{j=1, j\neq i}^{p + 1} \|\bm H_{l}^{(i,j)}\|_F^2 \overset{\eqref{eq4:prop Hla}}{\le}  \frac{81\sigma_{\max}^{*2}}{32\delta_{\sigma}^2\lambda^2\sigma^{*2}_{\min}}\|\nabla G(\bm W)\|_F^2. 
\end{align*}
This, together with the definition of $\bm H_l$, \eqref{eq4:prop Hla} and $\bm T_{l}^{(i)} =\bm P_{l}^{(i)}\bm Q_{l}^{(i)^T}$, yields  \eqref{eq:Hlapproximate}. 
\end{proof}
      
Next, using the above proposition, we show that for weight matrices in the neighborhood of the set of critical points, the singular matrices $\bm \Sigma_1$ and $\bm \Sigma_L$ satisfy the following spectral inequalities. 
\begin{lemma}\label{lem:twoinequality}
Consider the setting of \Cref{lem:pre}. Suppose that there exist matrices $\bm T_{l}^{(i)} \in \mathcal{O}^{g_i}$ for all $i \in [p]$ and $\bm T_{l}^{(p+1)} \in \mathcal{O}^{d_{l-1} - r_{\sigma}}$ such that \eqref{eq:Hlapproximate} holds for all $l=2,\dots,L$. Let $c_1$ be defined in \eqref{eq:c1} and 
\begin{align} 
&\textstyle \bm A_i := \left(\prod_{l=1}^{L-1} \bm T_{l+1}^{(i)}\right) \left(\bm \Sigma_{1}^{(i)}\right)^{L-1},\ \forall i \in [p],\ \bm A_{p+1} :=  \prod_{l=1}^{L-1}  \bm T_{l+1}^{(p+1)}\bm \Sigma_{l+1}^{(p+1)^T},\label{eq:A} \\
&\textstyle  \bm B_i := \left( \prod_{l=1}^{L-1} \bm T_{l+1}^{(i)} \right) \left(\bm \Sigma_{L}^{(i)}\right)^{L-1},\ \forall i \in [p],\ \bm B_{p+1} :=  \prod_{l=1}^{L-1}  \bm \Sigma_{l}^{(p+1)^T} \bm T_{l+1}^{(p+1)}, \label{eq:B}\\
& \eta_1 := \frac{\sigma_{\max}^*}{\sigma_{\min}^*}\left(  \frac{3\sqrt{2}\sigma_{\min}^*}{4\lambda} +   \frac{81\sigma_{\max}^{*{2}}}{8\delta_{\sigma}\lambda} +    \frac{9\sqrt{2g_{\max}} L\sigma_{\max}^{*}}{4\lambda} \right), \label{eq:eta1}\\
& c_2 := \left( \frac{3}{2}\sigma_{\max}^* \right)^{L}\frac{3y_1L}{2\sqrt{\lambda}\delta_{\sigma} \sigma^{*}_{\min}} + c_1+ y_1 p \sqrt{\lambda} \left(\frac{3\sigma_{\max}^*}{2}\right)^{L-2} \notag \\
&\qquad\ \left(\frac{L^2\eta_1 }{2\sigma^*_{\min}}+  \frac{3\sqrt{2g_{\max}}L^2\sigma_{\max}^*}{2\lambda\sigma_{\min}^*} \right). \label{eq:c2}
\end{align}
Then, we have 
\begin{align}
& \left\|(\bm \Sigma_1\bm \Sigma_1^T)^{L-1}\bm \Sigma_1 + \lambda \bm \Sigma_1 - \sqrt{\lambda}\mathrm{BlkD}\left(\bm A_1, \dots, \bm A_{p}, \bm A_{p+1}\right)\bm U_L^T \bm Y \bm V_1  \right\|_F \leq c_2 \|\nabla G(\bm W)\|_F, \label{eq:twoinequality2} \\
& \left\|(\bm \Sigma_L\bm \Sigma_L^T)^{L-1}\bm \Sigma_L + \lambda \bm \Sigma_L - \sqrt{\lambda}\bm U_L^T \bm Y \bm V_1 \mathrm{BlkD}\left(\bm B_1, \dots, \bm B_{p}, \bm B_{p+1}\right) \right\|_F \leq c_2 \|\nabla G(\bm W)\|_F.\label{eq:twoinequality1} 
\end{align} 
\end{lemma}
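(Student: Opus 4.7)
The plan is to derive both inequalities from Corollary \ref{coro1}, using the SVD $\bm W_l = \bm U_l \bm \Sigma_l \bm V_l^T$ of each weight matrix to rotate the first-order optimality approximation into a spectral equation, and then apply Proposition \ref{prop:Hlapproximate} together with Lemma \ref{lem:pre}(iii) to simplify the resulting matrix product into the desired block-diagonal form involving $\bm A_i$ and $\bm B_i$.

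For \eqref{eq:twoinequality2}, I would start by applying Corollary \ref{coro1} at $l = 1$, substituting $\bm W_1 = \bm U_1 \bm \Sigma_1 \bm V_1^T$, and multiplying the resulting bound on the left by $\bm U_1^T$ and on the right by $\bm V_1$; this preserves the Frobenius norm and yields
\[
\left\| (\bm \Sigma_1 \bm \Sigma_1^T)^{L-1} \bm \Sigma_1 + \lambda \bm \Sigma_1 - \sqrt{\lambda}\, \bm U_1^T \bm W_{L:2}^T \bm Y \bm V_1 \right\|_F \le c_1 \|\nabla G(\bm W)\|_F.
\]
The problem then reduces to approximating $\bm U_1^T \bm W_{L:2}^T$ by $\mathrm{BlkDiag}(\bm A_1, \ldots, \bm A_{p+1}) \bm U_L^T$. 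Expanding the SVDs gives $\bm U_1^T \bm W_{L:2}^T = (\bm U_1^T \bm V_2) \bm \Sigma_2^T (\bm U_2^T \bm V_3) \bm \Sigma_3^T \cdots (\bm U_{L-1}^T \bm V_L) \bm \Sigma_L^T \bm U_L^T$; by Proposition \ref{prop:Hlapproximate} each coupling factor $\bm U_{l-1}^T \bm V_l$ lies within Frobenius-norm distance $\frac{9\sigma_{\max}^*}{4\delta_\sigma \lambda \sigma_{\min}^*}\|\nabla G(\bm W)\|_F$ of the orthogonal block-diagonal matrix $\bm D_l := \mathrm{BlkDiag}(\bm T_l^{(1)}, \ldots, \bm T_l^{(p+1)})$. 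Replacing the $L-1$ coupling factors one by one, and using the spectral bound $\|\bm \Sigma_l\| \le 3\sigma_{\max}^*/2$ from Lemma \ref{lem:pre}(i) to control the remaining product at each step, bounds $\| \bm U_1^T \bm W_{L:2}^T - \bm D_2 \bm \Sigma_2^T \cdots \bm D_L \bm \Sigma_L^T \bm U_L^T \|_F$ and, after multiplication by $\sqrt{\lambda}\, \bm Y$ (whose spectral norm is $y_1$), yields the first, $(3\sigma_{\max}^*/2)^L \cdot 3 y_1 L / (2\sqrt{\lambda}\,\delta_\sigma \sigma_{\min}^*)$ contribution to $c_2$.

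Since both $\bm D_l$ and $\bm \Sigma_l$ share the block-diagonal structure induced by $(g_1, \ldots, g_p, d_{l-1} - r_\sigma)$, the product $\bm D_2 \bm \Sigma_2^T \cdots \bm D_L \bm \Sigma_L^T$ is itself block-diagonal. Its $(p+1)$-th block equals $\prod_{l=1}^{L-1} \bm T_{l+1}^{(p+1)} \bm \Sigma_{l+1}^{(p+1)^T} = \bm A_{p+1}$ exactly, while for each $i \in [p]$ the block is $\prod_{l=2}^L \bm T_l^{(i)} (\bm \Sigma_l^{(i)})^T$. To identify this with $\bm A_i$ I would use Lemma \ref{lem:pre}(iii) and the partition \eqref{eq:partsigma} to bound $\| \bm \Sigma_l^{(i)} - \sigma_{t_i}^* \bm I_{g_i} \|_F$, and hence $\| \bm \Sigma_l^{(i)} - \bm \Sigma_1^{(i)} \|_F$, by a multiple of $\sqrt{g_{\max}}\|\nabla G(\bm W)\|_F$. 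This near-scalar property gives the approximate commutation $\bm T_{l+1}^{(i)} (\bm \Sigma_l^{(i)})^T \approx (\bm \Sigma_l^{(i)})^T \bm T_{l+1}^{(i)}$, which lets me push each diagonal factor to the right and replace it by $\bm \Sigma_1^{(i)}$; after $L-1$ such moves the $i$-th block becomes $\bigl( \prod_{l=1}^{L-1} \bm T_{l+1}^{(i)} \bigr) (\bm \Sigma_1^{(i)})^{L-1} = \bm A_i$. The errors from these commutation and replacement steps, multiplied by $\sqrt{\lambda} y_1$ and by the spectral factor $(3\sigma_{\max}^*/2)^{L-2}$, combine into the $\eta_1$-dependent term of $c_2$ in \eqref{eq:c2}.

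The second inequality \eqref{eq:twoinequality1} follows by the symmetric argument starting from Corollary \ref{coro1} at $l = L$: rotating by $\bm U_L^T$ and $\bm V_L$ and expanding $\bm W_{L-1:1}^T \bm V_L = \bm V_1 \bm \Sigma_1^T (\bm U_1^T \bm V_2) \bm \Sigma_2^T \cdots (\bm U_{L-1}^T \bm V_L)$ yields an analogous block-diagonal product, where now $\bm \Sigma_L^{(i)}$ plays the role of the rightmost reference block for $i \in [p]$ (producing $\bm B_i$), and the alternating order of orthogonal and diagonal factors inside the $(p+1)$-th block is flipped to $\prod_{l=1}^{L-1} \bm \Sigma_l^{(p+1)^T} \bm T_{l+1}^{(p+1)} = \bm B_{p+1}$. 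The hardest part will be the bookkeeping: each of the $L-1$ replacements of a coupling factor $\bm U_{l-1}^T \bm V_l$ by $\bm D_l$ introduces an error controlled by Proposition \ref{prop:Hlapproximate}, and each approximate commutation introduces an error controlled by Lemma \ref{lem:pre}(iii); these errors must be multiplied by the remaining spectral factors of size at most $(3\sigma_{\max}^*/2)^{L-1}$ and by the factor $\sqrt{\lambda} y_1$ in front of $\bm Y$, and then combined across the $L$ positions so as to yield exactly the constants $\eta_1$ in \eqref{eq:eta1} and $c_2$ in \eqref{eq:c2}.
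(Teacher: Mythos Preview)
Your overall architecture—rotate Corollary~\ref{coro1} into spectral form, replace each coupling factor $\bm U_{l-1}^T\bm V_l$ by the block-diagonal $\bm T_l$ via Proposition~\ref{prop:Hlapproximate}, and then reorder the resulting block-diagonal product—matches the paper's. The gap is in how you obtain the commutation estimate.

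You write that Lemma~\ref{lem:pre}(iii) together with the partition~\eqref{eq:partsigma} bounds $\|\bm\Sigma_l^{(i)}-\sigma_{t_i}^*\bm I_{g_i}\|_F$ by a multiple of $\sqrt{g_{\max}}\|\nabla G(\bm W)\|_F$, and that this ``near-scalar'' property delivers $\bm T_{l+1}^{(i)}\bm\Sigma_l^{(i)}\approx\bm\Sigma_l^{(i)}\bm T_{l+1}^{(i)}$ with error $O(\|\nabla G\|_F)$. But Lemma~\ref{lem:pre}(iii) only controls $|\sigma_j(\bm W_l)-\sigma_j(\bm W_{l+1})|$ by the gradient norm; the deviation $|\sigma_j(\bm W_l)-\sigma_{t_i}^*|$ is controlled only by $\mathrm{dist}(\bm W,\mathcal{W}_{\bm\sigma^*})$ through Weyl's inequality. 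Hence the direct estimate $\|\bm T_{l+1}^{(i)}\bm\Sigma_l^{(i)}-\bm\Sigma_l^{(i)}\bm T_{l+1}^{(i)}\|_F\le 2\|\bm\Sigma_l^{(i)}-\sigma_{t_i}^*\bm I\|_F$ is of order $\mathrm{dist}(\bm W,\mathcal{W}_{\bm\sigma^*})$, not $\|\nabla G(\bm W)\|_F$. Since the whole chain of lemmas is building \emph{toward} the error bound, you cannot convert one into the other here; the $\eta_1$-piece of $c_2$ would come out wrong.

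The paper obtains the commutator bound from the \emph{balance} inequality instead. Starting from $\|\bm W_l^T\bm W_l-\bm W_{l-1}\bm W_{l-1}^T\|_F\le\frac{3\sqrt{2}\sigma_{\max}^*}{4\lambda}\|\nabla G(\bm W)\|_F$ (Lemma~\ref{lem:pre}(ii)), rotating by the SVDs, restricting to the $i$-th block, replacing $\bm H_l$ by $\bm T_l$, and then replacing $\bm\Sigma_l^{(i)},\bm\Sigma_{l-1}^{(i)}$ by $\bm\Sigma_L^{(i)}$ (the latter \emph{is} legitimate via Lemma~\ref{lem:pre}(iii)) yields a bound on the \emph{squared} commutator,
\[
\|\bm T_l^{(i)}(\bm\Sigma_L^{(i)})^2-(\bm\Sigma_L^{(i)})^2\bm T_l^{(i)}\|_F\le\eta_1\|\nabla G(\bm W)\|_F,
\]
with $\eta_1$ exactly as in~\eqref{eq:eta1}. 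The paper then applies an elementary auxiliary lemma (Lemma~\ref{lem:commute}): if $\|\bm\Sigma-a\bm I\|\le a/2$ then $\|\bm Q\bm\Sigma^2-\bm\Sigma^2\bm Q\|_F\ge a\|\bm Q\bm\Sigma-\bm\Sigma\bm Q\|_F$. Here the near-scalar estimate $\|\bm\Sigma_L^{(i)}-\sigma_{t_i}^*\bm I\|\le\sigma_{\min}^*/2$, which is indeed only $O(\mathrm{dist})$, enters as the \emph{hypothesis} of that lemma rather than as the error term, and the conclusion
\[
\|\bm T_l^{(i)}\bm\Sigma_L^{(i)}-\bm\Sigma_L^{(i)}\bm T_l^{(i)}\|_F\le\frac{\eta_1}{\sigma_{\min}^*}\|\nabla G(\bm W)\|_F
\]
is precisely the commutation bound needed to push all the $\bm T_l^{(i)}$ to the left and produce the $\eta_1$-dependent term of $c_2$ in~\eqref{eq:c2}.
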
 
\begin{proof} Let $\bm W^* = (\bm W_1^*,\dots,\bm W_L^*) \in \mathcal{W}_{\bm \sigma^*}$ be such that $\mathrm{dist}(\bm W, \mathcal{W}_{\bm \sigma^*}) = \|\bm W - \bm W^*\|_F$. It follows from \Cref{lem:pre} and \Cref{coro1} that \eqref{eq:sigmalowerbound}-\eqref{eq:wl+1} and \eqref{eq:coro1} holds. Recall that \eqref{eq:SVD Wl} denotes an SVD of $\bm W_l$ for each $l \in [L]$.  For ease of exposition, let $\bm H_l:=\bm U_{l-1}^T\bm V_l$  and $\bm T_l := \mathrm{BlkD}\left(\bm T_{l}^{(1)},\dots,\bm T_{l}^{(p)},\bm T_{l}^{(p+1)}\right)$ for each $l = 2,\dots,L$. This, together with \eqref{eq:Hlapproximate}, implies 
\begin{align}\label{eq0:lem two}
\left\|\bm H_l - \bm T_l \right\|_F \le \frac{9\sigma_{\max}^{*}}{ 4\delta_{\sigma}\lambda\sigma^{*}_{\min}}\|\nabla G(\bm W)\|_F,\ l=2,\dots,L. 
\end{align}

First, we are devoted to proving \eqref{eq:twoinequality1}. It follows from \eqref{eq:coro1} with $l=L$ that  
\begin{align}\label{eq1:lem two}
\left\| (\bm W_L \bm W^T_L)^{L-1} \bm W_L - \sqrt{\lambda} \bm Y \bm W_{L-1:1}^T + \lambda \bm W_L \right\|_F \le c_1\|\nabla G(\bm W)\|_F,
\end{align}
where $c_1$ is defined in \eqref{eq:c1}. Using the SVD in \eqref{eq:SVD Wl}, we have
\begin{align*}
\textstyle
&\textstyle  \left\| (\bm W_L \bm W^T_L)^{L-1} \bm W_L - \sqrt{\lambda} \bm Y \bm W_{L-1:1}^T + \lambda \bm W_L \right\|_F  
= \left\| (\bm \Sigma_L\bm \Sigma_L^T)^{L-1}\bm \Sigma_L - \sqrt{\lambda}\bm U_L^T\bm Y \bm V_1 \prod_{l=1}^{L-1} \bm \Sigma_l^T \bm H_{l+1} + \lambda \bm \Sigma_L \right\|_F \\
\ge & \textstyle \left\| (\bm \Sigma_L\bm \Sigma_L^T)^{L-1}\bm \Sigma_L + \lambda \bm \Sigma_L  - \sqrt{\lambda}\bm U_L^T\bm Y \bm V_1 \prod_{l=1}^{L-1} \bm \Sigma_l^T \bm T_{l+1} \right\|_F - \sqrt{\lambda}\left\| \bm Y \bm V_1\bm \Sigma_1^T(\bm H_2 - \bm T_2)\prod_{l=2}^{L-1} \bm \Sigma_l^T \bm H_{l+1} \right\|_F \\
&\textstyle  -  \sqrt{\lambda}\left\| \bm Y \bm V_1\bm \Sigma_1^T\bm T_2 \bm \Sigma_2^T(\bm H_3-\bm T_3)\prod_{l=3}^{L-1} \bm \Sigma_l^T \bm H_{l+1} \right\|_F -\dots - \sqrt{\lambda}\left\| \bm Y \bm V_1\left( \prod_{l=1}^{L-2}\bm \Sigma_l^T\bm T_{l+1} \right)\bm \Sigma_{L-1}^T(\bm H_{L} - \bm T_L) \right\|_F \\
\ge &  \left\| (\bm \Sigma_L\bm \Sigma_L^T)^{L-1}\bm \Sigma_L + \lambda \bm \Sigma_L  - \sqrt{\lambda}\bm U_L^T\bm Y \bm V_1 \prod_{l=1}^{L-1} \bm \Sigma_l^T \bm T_{l+1} \right\|_F - \left( \frac{3}{2}\sigma_{\max}^* \right)^{L}\frac{3 y_1 L}{2\sqrt{\lambda} \delta_{\sigma}\sigma^{*}_{\min}}\|\nabla G(\bm W)\|_F, 
\end{align*}
where the first inequality uses the triangle inequality and the last inequality follows from \eqref{eq:sigmalowerbound}, $\|\bm H_l\| =\|\bm T_l\| = 1$ for all $l \in [L]$, and \eqref{eq0:lem two}. This, together with \eqref{eq1:lem two}, implies 
\begin{align}\label{eq2:lem two}
&\left\| (\bm \Sigma_L\bm \Sigma_L^T)^{L-1}\bm \Sigma_L + \lambda \bm \Sigma_L  - \sqrt{\lambda}\bm U_L^T\bm Y \bm V_1 \prod_{l=1}^{L-1} \bm \Sigma_l^T \bm T_{l+1} \right\|_F\\
\le& \left( \left( \frac{3}{2}\sigma_{\max}^* \right)^{L}\frac{3 y_1 L}{2 \sqrt{\lambda}\delta_{\sigma}\sigma^{*}_{\min}} + c_1\right) \|\nabla G(\bm W)\|_F.  
\end{align}
Using the diagonal structure of $\bm \Sigma_l$ in \eqref{eq:SVD Wl} and $\bm T_l = \mathrm{BlkD}\left(\bm T_{l}^{(1)},\dots,\bm T_{l}^{(p)},\bm T_{l}^{(p+1)} \right)$, we compute 
\begin{align*}
     \textstyle   \prod_{l=1}^{L-1}\bm\Sigma_l^T \bm T_{l+1} = \mathrm{BlkD}\left(\prod_{l=1}^{L-1}\bm\Sigma_{l}^{(1)}\bm T_{l+1}^{(1)},\dots,\prod_{l=1}^{L-1}\bm\Sigma_{l}^{(p)}\bm T_{l+1}^{(p)},\prod_{l=1}^{L-1}\bm\Sigma_{l}^{(p+1)}\bm T_{l+1}^{(p+1)}\right). 
\end{align*}
Using \eqref{eq:wl+1} in \Cref{lem:pre}, we have for each $l \in [L]$ and $i \in [p]$, 
    \begin{align*}
    \left\|\bm\Sigma_{l}^{(i)} -\bm \Sigma_{L}^{(i)}\right\|_F^2 &  \textstyle= \sum_{j=1}^{g_i} \left( \sigma_j(\bm\Sigma_{l}^{(i)}) - \sigma_j(\bm \Sigma_{L}^{(i)})  \right)^2 = \sum_{j=1}^{g_i} \left( \sum_{k=l}^{L-1} \left( \sigma_j(\bm\Sigma_{k}^{(i)}) - \sigma_j(\bm\Sigma_{k+1}^{(i)})\right)  \right)^2\\
    &  \le g_i (L-1)^2 \left(\frac{3\sqrt{2}\sigma_{\max}^*}{4\lambda\sigma_{\min}^*} \|\nabla G(\bm W)\|_F\right)^2.
    \end{align*}
    This further implies
    \begin{align}\label{eq3:lem two}
  \left\|\bm\Sigma_{l}^{(i)} -\bm \Sigma_{L}^{(i)}\right\|_F \le  \frac{3\sqrt{2g	_{\max}} L\sigma_{\max}^*}{4\lambda\sigma_{\min}^*}  \|\nabla G(\bm W)\|_F. 
    \end{align}
    Now we claim that it holds for each $l \in [L]$ and $i \in [p]$ that 
    \begin{align}\label{eq4:lem two}
         \left\|\bm T_{l}^{(i)} \bm \Sigma_{L}^{(i)} - \bm \Sigma_{L}^{(i)} \bm T_{l}^{(i)}\right\|_F \leq \frac{\eta_1}{\sigma_{\min}^*} \|\nabla G(\bm W)\|_F,
    \end{align}
    where $\eta_1$	is defined in \eqref{eq:eta1}. To maintain the flow of the main proof, the proof of this claim is deferred to the end. Using the triangle inequality, we obtain    
    {\small
    \begin{align}\label{eq5:lem two}
    & \textstyle\ \left\|\prod_{l=1}^{L-1} \bm{\Sigma}_{L}^{(i)} \bm{T}_{l+1}^{(i)} - \left( \prod_{l=1}^{L-1} \bm{T}_{l+1}^{(i)} \right) \left(\bm{\Sigma}_{L}^{(i)}\right)^{L-1} \right\|_F  \notag \\
    \le & \  \sum_{j=2}^L \|\underbrace{\textstyle\left(\prod_{l=1}^{L-j} \bm{\Sigma}_{L}^{(i)} \bm{T}_{l+1}^{(i)}\right)\left(\bm \Sigma_{L}^{(i)} \prod_{k=L-j+2}^L \bm T_{k}^{(i
    )} \!\! -\!\!\left(\prod_{k=L-j+2}^L \bm T_{k}^{(i)} \right)\bm\Sigma_{L}^{(i)}\right)\left(\bm\Sigma_{L}^{(i)}\right)^{j-2}}_{\bm 
R_j}\!\!\!\! \|_F. 
\end{align}
}
For each $\|\bm R_j\|_F$, we bound
\begin{align*}
    \|\bm R_j\|_F& \le \left(\frac{3\sigma^*_{\max}}{2}\right)^{L-2}\left\|\bm \Sigma_{L}^{(i)} \prod_{k=L-j+2}^L \bm T_{k}^{(i)} - \left(\prod_{k=L-j+2}^L \bm T_{k}^{(i)}\right) \bm\Sigma_{L}^{(i)}\right\|_F\\
    &\le \left(\frac{3\sigma^*_{\max}}{2}\right)^{L-2}\left\|\left(\bm \Sigma_{L}^{(i)}\bm T_{L-j+2}^{(i)}-\bm T_{L-j+2}^{(i)}\bm \Sigma_{L}^{(i)}\right)\prod_{k=L-j+3}^L \bm T_{k}^{(i)}\right\|_F \\
    & \quad+  \left(\frac{3\sigma^*_{\max}}{2}\right)^{L-2}\left\|\bm T_{L-j+2}^{(i)}\left(\bm \Sigma_{L}^{(i)} \bm T_{L-j+3}^{(i)}- \bm T_{L-j+3}^{(i)}\bm \Sigma_{L}^{(i)}\right)\prod_{k=L-j+4}^L\bm T_{k}^{(i)} \right \|_F + \cdots\\
    & \quad+ \left( \frac{3\sigma^*_{\max}}{2} \right)^{L-2} \left\| \prod_{k=L-j+2}^{L-1} \bm T_{k}^{(i)} (\bm \Sigma_{L}^{(i)}\bm T_{L}^{(i)}-\bm T_{L}^{(i)}\bm \Sigma_{L}^{(i)})\right\|_F \\
    &\overset{\eqref{eq4:lem two}}{\le} (j-1)\left(\frac{3\sigma^*_{\max}}{2}\right)^{L-2} \frac{\eta_1}{\sigma^*_{\min}}\|\nabla G(\bm W)\|_F,  
\end{align*}
where the first inequality uses (i) of \Cref{lem:pre} and $\|\bm T_{l+1}^{(i)}\| = 1$ for all $l$ and $i$, and the second one is due the triangle inequality. This, together with \eqref{eq5:lem two}, implies 
\begin{align}\label{eq6:lem two}
\left\|\prod_{l=1}^{L-1} \bm{\Sigma}_{L}^{(i)} \bm{T}_{l+1}^{(i)} - \left( \prod_{l=1}^{L-1} \bm{T}_{l+1}^{(i)} \right) \left(\bm{\Sigma}_{L}^{(i)}\right)^{L-1} \right\|_F \leq  \left(\frac{3\sigma_{\max}}{2}\right)^{L-2}\frac{L^2\eta_1}{2\sigma^*_{\min}}\|\nabla G(\bm W)\|_F. 
\end{align} 
For each $i \in [p]$, we have 
    \begin{align*}
    & \textstyle\ \left\|\prod_{l=1}^{L-1} \bm{\Sigma}_{l}^{(i)} \bm{T}_{l+1}^{(i)} - \left( \prod_{l=1}^{L-1} \bm{T}_{l+1}^{(i)} \right) \left(\bm{\Sigma}_{L}^{(i)}\right)^{L-1} \right\|_F \notag \\
    \leq & \textstyle\ \left\| \prod_{l=1}^{L-1} \bm{\Sigma}_{L}^{(i)}\bm{T}_{l+1}^{(i)} 
    - \left( \prod_{l=1}^{L-1} \bm{T}_{l+1}^{(i)} \right) (\bm{\Sigma}_{L}^{(i)})^{L-1} \right\|_F 
    + \left\|(\bm{\Sigma}_{1}^{(i)} - \bm{\Sigma}_{L}^{(i)}) \bm{T}_{2}^{(i)} \prod_{l=2}^{L-1} \bm{\Sigma}_{l}^{(i)} \bm{T}_{l+1}^{(i)} \right\|_F +  \notag \\
    & \textstyle\ \left\| \bm{\Sigma}_{L}^{(i)} \bm{T}_{2}^{(i)}(\bm{\Sigma}_{2}^{(i)} - \bm{\Sigma}_{L}^{(i)}) \bm{T}_{3}^{(i)} \prod_{l=3}^{L-1} \bm{\Sigma}_{l}^{(i)} \bm{T}_{l+1}^{(i)} \right\|_F + \cdots + \left\|\prod_{l=1}^{L-2} \bm{\Sigma}_{l}^{(i)} \bm{T}_{l+1}^{(i)} (\bm{\Sigma}_{L-1}^{(i)} - \bm{\Sigma}_{L}^{(i)}) \bm{T}_{L}^{(i)}\right\|_F \\
    \leq &\ \left(\frac{3\sigma_{\max}^*}{2}\right)^{L-2}\left(\frac{L^2\eta_1 }{2\sigma^*_{\min}}+  \frac{3\sqrt{2g_{\max}}L^2\sigma_{\max}^*}{4\lambda\sigma_{\min}^*} \right)\|\nabla G(\bm W)\|_F,
    \end{align*}
    where the first inequality uses the triangle inequality, and the last inequality follows from \eqref{eq6:lem two}, \eqref{eq:sigmalowerbound} in \Cref{lem:pre}, \( \|\bm T_l\| = 1 \) for all \( l \in [L] \), and \eqref{eq3:lem two}. This, together with \eqref{eq:B}, yields 
\begin{align}\label{eq7:lem two}
&  \left\|\prod_{l=1}^{L-1}\bm\Sigma_l^T \bm T_{l+1} - \mathrm{BlkD}\left(\bm B_1, \dots, \bm B_{p}, \bm B_{p+1}\right)\right\|_F \notag \\
\le& \ \sum_{i=1}^p \left\|\prod_{l=1}^{L-1} \bm{\Sigma}_{l}^{(i)} \bm{T}_{l+1}^{(i)} - \left( \prod_{l=1}^{L-1} \bm{T}_{l+1}^{(i)} \right) \left(\bm{\Sigma}_{L}^{(i)}\right)^{L-1} \right\|_F \notag \\
\le & \ p \left(\frac{3\sigma_{\max}^*}{2}\right)^{L-2}\left(\frac{L^2\eta_1 }{2\sigma^*_{\min}}+  \frac{3\sqrt{2g_{\max}}L^2\sigma_{\max}^*}{4\lambda\sigma_{\min}^*} \right) \|\nabla G(\bm W)\|_F. 
\end{align}
Now, we are ready to prove \eqref{eq:twoinequality1}. Specifically, we have
	\begin{align*}
	&\ \left\|(\bm \Sigma_L\bm \Sigma_L^T)^{L-1}\bm \Sigma_L + \lambda \bm \Sigma_L - \sqrt{\lambda}\bm U_L^T \bm Y \bm V_1 \mathrm{BlkD}\left(\bm B_1, \dots, \bm B_{p}, \bm B_{p+1}\right) \right\|_F \\ 
	\le & \left\|(\bm \Sigma_L\bm \Sigma_L^T)^{L-1}\bm \Sigma_L + \lambda \bm \Sigma_L - \sqrt{\lambda}\bm U_L^T \bm Y \bm V_1 \prod_{l=1}^{L-1}\bm\Sigma_l^T \bm T_{l+1} \right\|_F\\ &\quad +\sqrt{\lambda}y_1\left\|\prod_{l=1}^{L-1}\bm\Sigma_l^T \bm T_{l+1} - \mathrm{BlkD}\left(\bm B_1, \dots, \bm B_{p}, \bm B_{p+1}\right)\right\|_F\\
    \quad \overset{(\ref{eq2:lem two}, \ref{eq7:lem two})}{\le}  & \textstyle c_2 \|\nabla G(\bm W)\|_F, 
\end{align*}	 
where $c_2$ is defined in \eqref{eq:c2}. In \eqref{eq1:lem two}, setting $l=1$ and applying the same argument, we obtain \eqref{eq:twoinequality2}.\medskip

The rest of the proof is devoted to proving the claim \eqref{eq4:lem two}. According to \eqref{eq:SVD Wl} and $\bm H_l=\bm U_{l-1}^T\bm V_l$  for each $l=2,\dots,L$, we have for each $i \in [p]$, 
\begin{align*}
   \quad& \left\|\bm W_l^T\bm W_{l}- \bm W_{l-1}\bm W_{l-1}^T\right\|_F = \left\|\bm H_l \bm \Sigma_l^T \bm \Sigma_l \bm H_l^T - \bm \Sigma_{l-1}\bm \Sigma_{l-1}^T\right\|_F \\
    \ge & \left\|\bm T_l \bm \Sigma_l^T \bm \Sigma_l \bm T_l^T - \bm \Sigma_{l-1}\bm \Sigma_{l-1}^T\right\|_F - 2\left\|\bm \Sigma_l\right\|^2 \left\|\bm H_l - \bm T_l\right\|_F \\
     \ge & \left\|\bm T_{l}^{(i)} (\bm \Sigma_{l}^{(i)})^2 - (\bm \Sigma_{l-1}^{(i)})^2\bm T_{l}^{(i)}\right\|_F - 2\left\|\bm \Sigma_l\|^2 \|\bm H_l - \bm T_l\right\|_F \\
      \ge & \left\|\bm T_{l}^{(i)} (\bm\Sigma_{L}^{(i)})^2 -(\bm\Sigma_{L}^{(i)})^2\bm T_{l}^{(i)}\right\|_F-2\left\|\bm \Sigma_l\right\|^2 \left\|\bm H_l - \bm T_l\right\|_F - \left\|(\bm\Sigma_{L}^{(i)})^2-(\bm\Sigma_{l}^{(i)})^2\right\|_F - \left\|(\bm\Sigma_{L}^{(i)})^2-(\bm\Sigma_{l-1}^{(i)})^2\right\|_F\\
      \overset{\eqref{eq:sigmalowerbound}}{\ge} & \left\|\bm T_{l}^{(i)} (\bm\Sigma_{L}^{(i)})^2 -(\bm\Sigma_{L}^{(i)})^2\bm T_{l}^{(i)}\right\|_F - 2\|\bm \Sigma_l\|^2\|\bm H_l - \bm T_l\|_F -3\sigma^*_{\max}\left(\left\|\bm\Sigma_{L}^{(i)}-\bm\Sigma_{l}^{(i)}\right\|_F + \left\|\bm\Sigma_{L}^{(i)}-\bm\Sigma_{l-1}^{(i)}\right\|_F\right),
    \end{align*}
    where the second inequality follows from $\bm T_{l}^{(i)} \in \mathcal{O}^{g_i}$ for each $i \in [p]$. This together with  \eqref{eq:balance}, \eqref{eq0:lem two}, and \eqref{eq3:lem two}, yields that for each $l=2,\dots,L$ and $i \in [p]$, we have 
    \begin{align}\label{eq9:lem two}
    \left\|\bm T_{l}^{(i)} (\bm\Sigma_{L}^{(i)})^2 -(\bm\Sigma_{L}^{(i)})^2\bm T_{l}^{(i)}\right\|_F \le \eta_1 \left\|\nabla G(\bm W)\right\|_F.  
    \end{align}
For each $i \in [p]$, using Weyl's inequality yields 
        $
        |\sigma_j(\bm\Sigma_{L}^{(i)}) - \sigma_{t_i}^*| \leq \|\bm W_L - \bm W_L^*\| \leq {\sigma_{\min}^*}/{2} 
        $
        for each $j \in [g_i]$, which implies \(\| \bm\Sigma_{L}^{(i)} -\sigma_{t_i}^* \bm I\| \le \sigma_{\min}^*/2 \). This, together with \Cref{lem:commute} in Appendix~\ref{app:lem commute} and $\bm T_{l}^{(i)} \in \mathcal{O}^{g_i}$, yields 
$ \|\bm T_{l}^{(i)} (\bm\Sigma_{L}^{(i)})^2 -(\bm\Sigma_{L}^{(i)})^2\bm T_{l}^{(i)}\|_F \ge \sigma_{\min}^* \|\bm T_{l}^{(i)} \bm\Sigma_{L}^{(i)} -\bm\Sigma_{L}^{(i)}\bm T_{l}^{(i)}\|_F. $
    Using this and \eqref{eq9:lem two}, we obtain \eqref{eq4:lem two}. 
\end{proof}


\subsubsection{Proof of the Error Bound}\label{subsubsec:pf eb}

Armed with the above results, we now proceed to prove the error bound stated in \Cref{thm:eb}. This is achieved in three steps. First, we show that, in a neighborhood of the critical point set, the singular values and corresponding singular vectors of the weight matrices can be controlled by $\|\nabla G(\bm W)\|_F$ (see Propositions~\ref{prop:singular value 2} and \ref{prop:singular control}). Second, we establish the error bound for Problem \eqref{eq:G} (see \Cref{thm:eb G}). To this end, we construct an intermediate point $\hat{\bm W}$ using left and right singular matrices (or suitable orthogonal matrices derived from them) of $\bm W$ (see \eqref{eq0:thm eb} in the proof of \Cref{thm:eb G}) and show that both $\mathrm{dist}(\hat{\bm W}, \mathcal{W}_G)$ and $\|\bm W - \hat{\bm W}\|_F$ are upper bounded by $\|\nabla G(\bm W)\|_F$. Finally, using \Cref{lem:equi FG} and \Cref{thm:eb G}, we prove the desired error bound of Problem \eqref{eq:F} in \Cref{thm:eb}. 


We next bound the singular values and the associated singular vectors of the weight matrices in the following two propositions.

\begin{prop}\label{prop:singular value 2}
Let $\bm \sigma^* \in \mathcal{A}_{\rm sort}\setminus\{\bm 0 \}$ be arbitrary. The following statements hold: \\
(i) Suppose that $L=2$ and \eqref{eq:AS L=2} holds. Then for all $\bm W$ satisfying 
\begin{subequations}\label{eq:dist grad 1}
\begin{align}
\textstyle
    & \mathrm{dist}(\bm W, \mathcal{W}_{\bm \sigma^*}) \le \min\left\{ \frac{\delta_{\sigma}}{3},  \frac{\sqrt{\lambda}}{\sqrt{3(\sqrt{\lambda}+y_1)}}\left(
    \min\left\{ \min_{i \in [s_{p_Y}]}\left| \sqrt{\lambda} - y_i \right|, \sqrt{\lambda} \right\} \right)^{{1}/{2}}\right\}, \label{eq:dist 7}\\
    & \|\nabla G(\bm W)\|_F \le \frac{ \sqrt{2\lambda} \min\left\{ \min_{i \in [s_{p_Y}]}\left| \sqrt{\lambda} - y_i \right|, \sqrt{\lambda} \right\} \sigma^*_{\min} }{12c_2} , \label{eq:grad 2}
\end{align}
\end{subequations}
it holds for $l=1,2$ that
\begin{align}\label{eq:prop singular}
\sigma_i(\bm W_l) \le c_3   \|\nabla G(\bm W)\|_F,\  \forall i =  r_{\sigma}+1,\dots,\min\{d_l,d_{l-1}\},  
\end{align}
where
$ c_3 := {6c_2(y_1+\sqrt{\lambda})}/({\lambda \min\{ \min_{i \in [s_{p_Y}]}| \sqrt{\lambda} - y_i |, \sqrt{\lambda} \}}).$\\
(ii) Suppose that $L \ge 3$ and \eqref{eq:AS y} holds. Then for all $\bm W$ satisfying
\begin{align}\label{eq:dist 5}
  \mathrm{dist}(\bm W, \mathcal{W}_{\bm \sigma^*}) \le \min\left\{ \frac{\delta_{\sigma}}{3}, \left( \frac{\sqrt{\lambda}}{2y_1} \right)^{1/(L-2)} \right\}, 
\end{align}  
it holds for all $l \in [L]$ that  
\begin{align}\label{eq:singular 2}
\sigma_i(\bm W_l) \le c_3\|\nabla G(\bm W)\|_F,\ \forall i =  r_{\sigma}+1,\dots,\min\{d_l,d_{l-1}\}, 
\end{align}
where  
\begin{align}\label{eq:c3}  
c_3 := \frac{2 }{\lambda}\left( c_1 +  \left( \frac{3}{2}\sigma_{\max}^* \right)^{L}\frac{3(L-1)y_1}{ 2\delta_{\sigma}\sqrt{\lambda} \sigma^{*}_{\min}} \right).
\end{align}
\end{prop}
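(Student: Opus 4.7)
The plan is to start from Lemma~\ref{lem:twoinequality}, restrict both inequalities to the $(p{+}1,p{+}1)$ diagonal block indexed by $i>r_\sigma$, and derive a polynomial-type inequality in the small singular values $\sigma_i(\bm W_l)$; Assumption~\ref{AS:2} then forces these to be of order $\|\nabla G(\bm W)\|_F$.

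For part (ii) with $L\ge 3$, the coefficient block $\bm A_{p+1}$ in \eqref{eq:A} is a product of $L-1$ copies of the ``small'' singular-value blocks $\bm \Sigma_k^{(p+1)}$, so $\|\bm A_{p+1}\|\le s^{L-1}$, where $s:=\max_{k,\,i>r_\sigma}\sigma_i(\bm W_k)\le\mathrm{dist}(\bm W,\mathcal{W}_{\bm\sigma^*})$. Combined with $\|\bm U_L^T\bm Y\bm V_1\|\le y_1$, the first inequality of \Cref{lem:twoinequality} at its $(i,i)$-th entry yields
\[
\sigma_i(\bm W_1)^{2L-1}+\lambda\sigma_i(\bm W_1)\ \le\ \sqrt{\lambda}\,y_1\,s^{L-1}+c_2\|\nabla G(\bm W)\|_F.
\]
The distance hypothesis forces $s^{L-2}\le\sqrt{\lambda}/(2y_1)$, hence $\sqrt{\lambda}y_1s^{L-1}\le\lambda s/2$, and picking $i$ to make $\sigma_i(\bm W_1)$ close to $s$ (together with \Cref{lem:pre}(iii) to compare $s$ with $\sigma_{r_\sigma+1}(\bm W_1)$) rearranges to $\sigma_i(\bm W_1)\le O(\|\nabla G(\bm W)\|_F/\lambda)$. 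The same argument applied to the second inequality controls $\sigma_i(\bm W_L)$, and \Cref{lem:pre}(iii) propagates the bound to intermediate layers. The extra $(\sigma_{\max}^*)^L y_1/(\delta_\sigma\sqrt{\lambda}\sigma_{\min}^*)$ term in $\eta_2$ over $c_1$ accounts for the projection error from \Cref{prop:Hlapproximate} needed to align the $\sigma^*$-block structure of $\bm\Sigma_l$ with the SVD basis in the RHS cross-term.

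For part (i) with $L=2$, the absorption above fails because $L-2=0$. Instead, I would use both inequalities of \Cref{lem:twoinequality} jointly: for $L=2$, $\bm A_{p+1}=\bm T_2^{(p+1)}\bm \Sigma_2^{(p+1)^T}$ and $\bm B_{p+1}=\bm \Sigma_1^{(p+1)^T}\bm T_2^{(p+1)}$, symmetrically coupling $\bm \Sigma_1^{(p+1)}$ and $\bm \Sigma_2^{(p+1)}$ through $\bm M:=\bm U_L^T\bm Y\bm V_1$. Using \Cref{prop:Hlapproximate} together with the gap $\delta_\sigma$, one can show that the $(i,i)$-th diagonal entry of $\bm M$ is close to $y_i$ up to a controlled error. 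The first inequality then reads
\[
\sigma_i(\bm W_1)^{3}+\lambda\sigma_i(\bm W_1)\ \le\ \sqrt{\lambda}\,\sigma_i(\bm W_2)\,y_i+O\big(\|\nabla G(\bm W)\|_F\big),
\]
and substituting $\sigma_i(\bm W_2)=\sigma_i(\bm W_1)+O(\|\nabla G(\bm W)\|_F)$ from \Cref{lem:pre}(iii) gives $\sigma_i(\bm W_1)\big(\lambda-\sqrt{\lambda}y_i+\sigma_i(\bm W_1)^{2}\big)\le O(\|\nabla G(\bm W)\|_F)$. Since $\sigma_i^*=0$ for $i>r_\sigma$ forces $y_i<\sqrt{\lambda}$, and Assumption~\ref{AS:2} (equivalently \eqref{eq:AS lambda}) excludes $y_i=\sqrt{\lambda}$, the coefficient $\lambda-\sqrt{\lambda}y_i$ is bounded below by $\sqrt{\lambda}\min\{\min_{i\in[s_{p_Y}]}|\sqrt{\lambda}-y_i|,\sqrt{\lambda}\}$, yielding the stated bound on $\sigma_i(\bm W_l)$ with constant $c_3$.

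The main obstacle I expect is the ``$\bm M_{i,i}\approx y_i$'' step in part (i): for $\bm W$ only close to (not on) a critical point, the SVD vectors $\bm U_L$ and $\bm V_1$ are perturbed, and quantifying $\bm M_{i,i}-y_i$ requires a careful combination of \Cref{prop:Hlapproximate} with the gap $\delta_\sigma$, coupled with the smallness hypothesis \eqref{eq:grad 2} on $\|\nabla G(\bm W)\|_F$ to make a bootstrap argument close. This is precisely why both a distance hypothesis \eqref{eq:dist 7} \emph{and} a gradient-norm hypothesis \eqref{eq:grad 2} are needed in part~(i), while part~(ii) requires only a distance hypothesis. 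The $y_1$ in the numerator of $c_3$ comes from using the uniform bound $\|\bm M\|\le y_1$ in the off-diagonal alignment errors, and the $\min\{\cdot,\sqrt{\lambda}\}$ in the denominator reflects the two regimes $0<y_i<\sqrt{\lambda}$ (gap $|\sqrt{\lambda}-y_i|$) and $y_i=0$ (gap $\sqrt{\lambda}$).
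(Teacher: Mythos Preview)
Your outline is close in spirit but the execution differs from the paper's and has a gap. The paper does not use \Cref{lem:twoinequality} here; it applies \Cref{coro1} at the layer $l\in\argmax_{k}\sigma_{r_\sigma+1}(\bm W_k)$, substitutes the SVDs, replaces each $\bm H_k=\bm U_{k-1}^T\bm V_k$ by $\bm T_k$ via \Cref{prop:Hlapproximate} (this is exactly the $\eta_2-c_1$ correction you identified), and then restricts to the $(p{+}1)$-th diagonal block. Because $l$ is the argmax layer, the cross-term is bounded by $\sqrt{\lambda}y_1\sigma_{r_\sigma+1}^{L-1}(\bm W_l)$ and is absorbed directly. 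Your route through \Cref{lem:twoinequality} gives inequalities only at $l=1$ and $l=L$, and you then try to relate $s=\max_k\sigma_{r_\sigma+1}(\bm W_k)$ to $\sigma_{r_\sigma+1}(\bm W_1)$ via \Cref{lem:pre}(iii). That lemma, however, is stated only for $i\in[r_\sigma]$---the proof divides by $\sigma_i(\bm W_l)+\sigma_i(\bm W_{l+1})$, which may vanish for $i>r_\sigma$---so this transfer step is not available.

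\textbf{Part (i).} There is a genuine gap. The claim ``$\sigma_i^*=0$ for $i>r_\sigma$ forces $y_i<\sqrt{\lambda}$'' is false: for $L=2$ the scalar equation is $x(x^2+\lambda-\sqrt{\lambda}y)=0$, so $x=0$ is a root for \emph{every} $y$, and a zero entry in $\bm\sigma^*$ places no constraint on the associated $y$. Moreover, there is no intrinsic identification between the index $i>r_\sigma$ of a singular value of $\bm W_1$ and the index $i$ of $y_i$ (the relation goes through the unknown permutation $\bm\Pi$), so the entry-wise inequality you write down, with its factor $y_i$, is not well-posed. And once again the substitution $\sigma_i(\bm W_2)=\sigma_i(\bm W_1)+O(\|\nabla G\|_F)$ invokes \Cref{lem:pre}(iii) outside its valid range.

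The paper's argument for (i) works at the block-matrix level. Writing $\bm\Psi:=\bm U_2^T\bm Y\bm V_1$ and partitioning it as in \eqref{eq:SVD Wl}, the two inequalities of \Cref{lem:twoinequality} first show that all off-diagonal blocks $\bm\Psi^{(i,j)}$ with $i\ne j$ have Frobenius norm $O(\|\nabla G\|_F)$. Combined with hypothesis \eqref{eq:grad 2} and Weyl's inequality, this pins every singular value of the block $\bm\Psi^{(p+1,p+1)}$ within $\delta/3$ of some $y_j$, so by \eqref{eq:AS lambda} one gets the two-sided spectral gap $|\sigma_j^2(\bm\Psi^{(p+1,p+1)})-\lambda|\ge 2\delta\sqrt{\lambda}/3$. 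The decisive step is then an \emph{elimination}: restrict both inequalities of \Cref{lem:twoinequality} to the $(p{+}1)$-th block, right-multiply one by $\bm\Psi^{(p+1,p+1)^T}$ and combine it with $\sqrt{\lambda}$ times the other to cancel the mixed $\bm\Sigma_1^{(p+1)}$--$\bm\Sigma_2^{(p+1)}$ term; what survives is $\sqrt{\lambda}\bigl\|\bm\Sigma_l^{(p+1)^T}\bigl(\bm\Psi^{(p+1,p+1)}\bm\Psi^{(p+1,p+1)^T}-\lambda\bm I\bigr)\bigr\|_F$, which dominates $\tfrac{2\lambda\delta}{3}\|\bm\Sigma_l^{(p+1)}\|_F$ by the spectral gap, while the cubic remainders are absorbed via \eqref{eq:dist 7}. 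It is this matrix-level cancellation---not an entry-wise sign argument---that makes the $L=2$ case go through.
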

\begin{proof} Let $\bm W^* = (\bm W_1^*,\dots,\bm W_L^*) \in \mathcal{W}_{\bm \sigma^*}$ be such that $\mathrm{dist}(\bm W, \mathcal{W}_{\bm \sigma^*}) = \|\bm W - \bm W^*\|_F$. From \eqref{eq:dist 7}, we have $\mathrm{dist}(\bm W,  \mathcal{W}_{\bm \sigma^*}) \le \delta_{\sigma}/3 < \sigma_{\min}^*/2$ in case (i). From \eqref{eq:dist 5}, the same bound holds in case (ii). This, together with \Cref{lem:pre} and \Cref{coro1}, implies that \eqref{eq:sigmalowerbound}-\eqref{eq:wl+1} and \eqref{eq:coro1} hold. According to $\mathrm{dist}(\bm W, \mathcal{W}_{\bm \sigma^*}) \le \delta_{\sigma}/3$, \Cref{prop:Hlapproximate}, and \Cref{lem:twoinequality}, there exist matrices $\bm T_{l}^{(i)} \in \mathcal{O}^{g_i}$ for all $i \in [p]$ and $\bm T_{l}^{(p+1)} \in \mathcal{O}^{d_{l-1} - r_{\sigma}}$  such that \eqref{eq:Hlapproximate}, \eqref{eq:twoinequality2}, and \eqref{eq:twoinequality1} hold for all $l=2,\dots,L$, where $\bm A_i$ and $\bm B_i$ for each $i \in [p+1]$ are respectively defined in \eqref{eq:A} and \eqref{eq:B}. Recall that \eqref{eq:SVD Wl} denotes an SVD of $\bm W_l$ and let $\bm H_l := \bm U_{l-1}^T\bm V_l$ and $\bm T_l := \mathrm{BlkD}(\bm T_{l}^{(1)},\dots,\bm T_{l}^{(p)},\bm T_{l}^{(p+1)})$ for each $l=2,\dots,L$. This, together with \eqref{eq:Hlapproximate}, implies 
\begin{align}\label{eq0:prop singular 2}
\left\|\bm H_l - \bm T_l \right\|_F \le \frac{9\sigma_{\max}^{*}}{ 4\delta_{\sigma}\lambda\sigma^{*}_{\min}}\|\nabla G(\bm W)\|_F,\ l=2,\dots,L. 
\end{align}

(i) Suppose that $L=2$ and \eqref{eq:AS L=2} holds. For ease of exposition, define $\bm \Psi := \bm U_2^T\bm Y\bm V_1 \in \R^{d_2\times d_0}$ and we partition $\bm \Psi$ as  
\begin{align}\label{eq2:prop singular}
\bm \Psi = 
\begin{bmatrix}
\bm \Psi^{(1,1)} & \cdots & \bm \Psi^{(1,p+1)} \\
\vdots & \ddots & \vdots \\
\bm \Psi^{(p+1,1)} & \cdots & \bm \Psi^{(p+1,p+1)}
\end{bmatrix},
\end{align}
where $\bm \Psi^{(i,j)} \in \R^{g_i \times g_j} $ for each $i,j \in [p]$, $\bm \Psi^{(p+1, j)} \in \R^{(d_2 - r_{\sigma})\times g_j},\ \bm \Psi^{(i, p+1)} \in \R^{g_i \times (d_0 - r_{\sigma})}$ for each $i, j \in [p]$, and $\bm \Psi^{(p+1,p+1)} \in \R^{(d_2-r_\sigma)\times(d_0 - r_\sigma)}$. 
Using $L=2$, \eqref{eq:A}, \eqref{eq:B}, \eqref{eq:twoinequality2} and \eqref{eq:twoinequality1}, we have 
{\footnotesize
\begin{align}
    &c_2\|\nabla G(\bm W)\|_F \ge \left\|(\bm \Sigma_{1}^{(p+1)}\bm \Sigma_{1}^{(p+1)^T})\bm \Sigma_{1}^{(p+1)} +\lambda \bm
    \Sigma_{1}^{(p+1)}\!\! -\!\! \sqrt{\lambda} \bm T_{2}^{(p+1)} \bm \Sigma_{2}^{(p+1)^T} \bm \Psi^{(p+1,p+1)}\right\|_F,\label{eq7:prop singular}\\
    &c_2\|\nabla G(\bm W)\|_F \ge \left\|(\bm \Sigma_{2}^{(p+1)}\bm \Sigma_{2}^{(p+1)^T})\bm \Sigma_{2}^{(p+1)} +\lambda \bm\Sigma_{2}^{(p+1)}
    \!\!-\!\! \sqrt{\lambda}\bm\Psi^{(p+1,p+1)} \bm \Sigma_1^{(p+1)^T}\bm T_{2}^{(p+1)}\right\|_F.\label{eq8:prop singular}
\end{align}
}
Using \eqref{eq:twoinequality2} again, we have 
\begin{align}
\notag
    c_2^2 \|\nabla G(\bm W)\|_F^2 \ge \left\|(\bm \Sigma_1\bm \Sigma_1^T)\bm \Sigma_1 + \lambda \bm \Sigma_1 - \sqrt{\lambda}\mathrm{BlkD}\left(\bm A_1, \dots, \bm A_{p}, \bm A_{p+1}\right)\bm \Psi \right\|_F^2.
\end{align}
    By dropping the diagonal blocks and the $(p+1)$-th row blocks of the matrix on above the right-hand side, we have 
\begin{equation}\label{eq4:prop singular}
    c_2^2 \|\nabla G(\bm W)\|_F^2 \ge \lambda\sum_{i=1}^p \sum_{j=1,j\neq i}^{p+1}\|\bm A_i \bm\Psi^{(i,j)}\|_F^2 
    \overset{(\ref{eq:sigmalowerbound}, \ref{eq:A})}{\ge} \frac{\lambda\sigma^{*2}_{\min}}{4}  \sum_{i=1}^p \sum_{j=1,j\neq i}^{p+1}\| \bm\Psi^{(i,j)}\|_F^2.
\end{equation}
Applying the same argument to \eqref{eq:twoinequality1} yields
$ 
    c_2^2 \|\nabla G(\bm W)\|_F^2 \ge  \frac{\lambda\sigma^{*2}_{\min}}{4}  \sum_{j=1}^p \sum_{i=1,i\neq j}^{p+1}\| \bm\Psi^{(i,j)}\|_F^2.   
$
This, together with \eqref{eq4:prop singular}, yields
\begin{equation}\label{eq5:prop singular}
    \frac{8c_2^2}{\lambda\sigma_{\min}^{*2}}\|\nabla G(\bm W)\|_F^2 \ge \sum_{i=1}^{p+1} \sum_{j=1,j\neq i}^{p+1}\| \bm\Psi^{(i,j)}\|_F^2.
\end{equation}
Using \eqref{eq:AS L=2}, we define
$\delta := \min\left\{ \min_{i \in [s_{p_Y}]}\left| \sqrt{\lambda} - y_i \right|, \sqrt{\lambda} \right\} > 0.$ Substituting this into \eqref{eq:grad 2} yields
\begin{align*}
\frac{\delta}{3} \ge \frac{2\sqrt{2}c_2}{\sqrt{\lambda} \sigma_{\min}^*} \|\nabla G(\bm W)\|_F \overset{(\ref{eq2:prop singular}, \ref{eq5:prop singular})}{\ge} \left\| \bm \Psi - \mathrm{BlkD}\left(\bm \Psi^{(1,1)},\dots, \bm \Psi^{(p+1,p+1)}\right) \right\|_F. 
\end{align*}
This, together with $\bm \Psi = \bm U_2^T\bm Y\bm V_1$ and Weyl's inequality, yields for all $i \in [d_{\min}]$, 
\begin{align*}
\frac{\delta}{3} & \ge\left|y_i - \sigma_i\left(\mathrm{BlkD} (\bm \Psi^{(1,1)},\dots, \bm \Psi^{(p+1,p+1)})\right) \right| \\
& \ge \left|y_i - \sqrt{\lambda}\right|  - \left| \sigma_i\left(\mathrm{BlkD} (\bm \Psi^{(1,1)},\dots, \bm \Psi^{(p+1,p+1)})\right) - \sqrt{\lambda} \right| \\
& \ge \delta - \left| \sigma_i\left(\mathrm{BlkD} (\bm \Psi^{(1,1)},\dots, \bm \Psi^{(p+1,p+1)})\right) - \sqrt{\lambda} \right|, 
\end{align*}  
which implies $\left| \sigma_i( \bm \Psi^{(p+1,p+1)} ) - \sqrt{\lambda} \right| \ge {2\delta}/{3}$ for all
$i \in [d_{\min}- r_{\sigma}]$. Accordingly, for all $i \in [d_{\min} - r_{\sigma}]$, we obtain
\begin{align}\label{eq6:prop singular}
\left| \sigma_i^2( \bm \Psi^{(p+1,p+1)}) - \lambda \right| & = \left| \sigma_i( \bm \Psi^{(p+1,p+1)}) - \sqrt{\lambda} \right|  \left| \sigma_i( \bm \Psi^{(p+1,p+1)}) + \sqrt{\lambda} \right| \ge \frac{2\delta\sqrt{\lambda}}{3}.  
\end{align}

According to $\bm \Psi = \bm U_2^T\bm Y\bm V_1$ and \eqref{eq2:prop singular}, we have  
\begin{align}\label{eq9:prop singular}
    \| \bm{\Psi}^{(p+1,p+1)} \| \leq \| \bm{Y} \| = y_1.
\end{align}
Using this and \eqref{eq7:prop singular}, we obtain 
\begin{align*}
c_2y_1\|\nabla G(\bm W)\|_F &\ge \left\|\left((\bm \Sigma_{1}^{(p+1)}\bm \Sigma_{1}^{(p+1)^T})\bm \Sigma_{1}^{(p+1)} +\lambda \bm
    \Sigma_{1}^{(p+1)} - \sqrt{\lambda} \bm T_{2}^{(p+1)} \bm \Sigma_{2}^{(p+1)^T} \bm \Psi^{(p+1,p+1)}\right) \bm{\Psi}^{(p+1,p+1)^T }\right\|_F \\
& \ge  \left\|\lambda \bm \Sigma_{1}^{(p+1)} \bm\Psi^{(p+1,p+1)^T} - \sqrt{\lambda} \bm T_{2}^{(p+1)} \bm \Sigma_{2}^{(p+1)^T} \bm\Psi^{(p+1,p+1)}\bm\Psi^{(p+1,p+1)^T} \right\|_F \\
    & \quad\ - \left\|\bm \Sigma_{1}^{(p+1)}\bm \Sigma_{1}^{(p+1)^T}\bm \Sigma_{1}^{(p+1)}\bm\Psi^{(p+1,p+1)^T} \right\|_F \\
    & \ge \left\|\lambda\bm T_{2}^{(p+1)^T} \bm \Sigma_{1}^{(p+1)^T} \bm\Psi^{(p+1,p+1)^T} - \sqrt{\lambda}  \bm \Sigma_{2}^{(p+1)^T} \bm\Psi^{(p+1,p+1)}{\bm\Psi^{(p+1,p+1)}}^T\right\|_F - y_1 \|\bm \Sigma_{1}^{(p+1)}\|_F^3, 
\end{align*}
where the last inequality follows from $\bm T_{2}^{(p+1)} \in \mathcal{O}^{d_2-r_{\sigma}}$ and \eqref{eq9:prop singular}. Using \eqref{eq8:prop singular}, we have
\begin{align*}
    c_2\sqrt{\lambda}\|\nabla G(\bm W)\|_F &\ge \sqrt{\lambda} \left\|\lambda \bm\Sigma_{2}^{(p+1)} - \sqrt{\lambda}\bm \Psi^{(p+1,p+1)} \bm \Sigma_{1}^{(p+1)^T}\bm T_{2}^{(p+1)} \right\|_F - \sqrt{\lambda} \left\|\bm \Sigma_{2}^{(p+1)}\bm \Sigma_{2}^{(p+1)^T} \bm \Sigma_{2}^{(p+1)} \right\|_F \\
    &\ge \left\| \lambda^{\frac{3}{2}}\bm\Sigma_{2}^{(p+1)^T} - \lambda \bm T_{2}^{(p+1)^T} \bm \Sigma_{1}^{(p+1)}\bm \Psi^{(p+1,p+1)^T} \right\|_F - \sqrt{\lambda} \left\|\bm \Sigma_{2}^{(p+1)} \right\|_F^3. 
\end{align*} 
Summing up the above two inequalities, together with the triangle inequality, yields
\begin{align}\label{eq10:prop singular}
    c_2(y_1+\sqrt{\lambda})\|\nabla G(\bm W)\|_F & \ge \left\|\lambda^{\frac{3}{2}}\bm\Sigma_{2}^{(p+1)^T} - \sqrt{\lambda}  \bm \Sigma_{2}^{(p+1)^T} \bm\Psi^{(p+1,p+1)}\bm\Psi^{(p+1,p+1)^T}\right\|_F \notag \\
    &\quad - \sqrt{\lambda} \left\|\bm \Sigma_{2}^{(p+1)}\right\|_F^3-y_1\left\|\bm \Sigma_{1}^{(p+1)}\right\|_F^3. 
\end{align}
Repeating the argument from \eqref{eq9:prop singular} to \eqref{eq10:prop singular}, but this time multiplying \eqref{eq7:prop singular} by $\sqrt{\lambda}$ and \eqref{eq8:prop singular} by $y_1$, we obtain
\begin{align*}
    &c_2(y_1+\sqrt{\lambda})\|\nabla G(\bm W)\|_F  \\
    \ge & \left\|\lambda^{\frac{3}{2}}\bm\Sigma_{1}^{(p+1)^T} - \sqrt{\lambda}\bm\Psi^{(p+1,p+1)^T}\bm\Psi^{(p+1,p+1)}  \bm \Sigma_{1}^{(p+1)^T} \right\|_F - \sqrt{\lambda} \left\|\bm \Sigma_{1}^{(p+1)}\right\|_F^3-y_1 \left\|\bm \Sigma_{2}^{(p+1)}\right\|_F^3. 
\end{align*}
Summing up the above two inequalities yields 
\begin{align*}
   & 2c_2(y_1+\sqrt{\lambda})\left\|\nabla G(\bm W)\right\|_F \\
   \ge &\ \sqrt{\lambda}  \left\|(\bm\Psi^{(p+1,p+1)}\bm\Psi^{(p+1,p+1)^T} - \lambda \bm I )\bm \Sigma_{2}^{(p+1)}\right\|_F +\sqrt{\lambda}\left\|(\bm\Psi^{(p+1,p+1)^T}\bm\Psi^{(p+1,p+1)} - \lambda \bm I)\bm \Sigma_{1}^{(p+1)}\right\|_F \\
   &\quad -(\sqrt{\lambda}+y_1)\left(\left\|\bm \Sigma_{2}^{(p+1)}\right\|_F^3 + \left\|\bm \Sigma_{1}^{(p+1)}\right\|_F^3\right)\\
   \ge &\ \sqrt{\lambda}\sigma_{\min}\left(\bm\Psi^{(p+1,p+1)}\bm\Psi^{(p+1,p+1)^T} - \lambda \bm I\right)\left\|\bm \Sigma_{2}^{(p+1)}\right\|_F+\sqrt{\lambda}\sigma_{\min}\left(\bm\Psi^{(p+1,p+1)^T}\bm\Psi^{(p+1,p+1)} - \lambda \bm I\right)\left\|\bm \Sigma_{1}^{(p+1)}\right\|_F\\
   &\quad  - (\sqrt{\lambda}+y_1)\left(\left\|\bm \Sigma_{2}^{(p+1)}\right\|_F^3 + \left\|\bm \Sigma_{1}^{(p+1)}\right\|_F^3\right)\\
   \overset{\eqref{eq6:prop singular}}{\ge} &\ \left(\frac{2\lambda\delta }{3} -(\sqrt{\lambda}+y_1)\left\|\bm \Sigma_{1}^{(p+1)}\right\|_F^2\right)\left\|\bm \Sigma_{1}^{(p+1)}\right\|_F +  \left(\frac{2\lambda\delta}{3} -(\sqrt{\lambda}+y_1)\left\|\bm \Sigma_{2}^{(p+1)}\right\|_F^2\right)\left\|\bm \Sigma_{2}^{(p+1)}\right\|_F \\
    \geq &\ \frac{\lambda\delta}{3}\left(\left\|\bm \Sigma_{1}^{(p+1)}\right\|_F+\left\|\bm \Sigma_{2}^{(p+1)}\right\|_F\right),
\end{align*}
where the last inequality follows from the definition of $\delta$, \eqref{eq:dist 7}, and $\| \bm{\Sigma}_1^{(p+1)}\|_F = \| \bm{\Sigma}_1^{(p+1)} - \bm 0\|_F \le \| \bm{\Sigma}_1 - \bm\Sigma_1^*\|_F\le \|\bm W_1-\bm W_1^*\|_F \le  \mathrm{dist}(\bm{W}, \mathcal{W}_{\bm{\sigma}^*})$ and $\| \bm{\Sigma}_2^{(p+1)} \|_F \le  \mathrm{dist}(\bm{W}, \mathcal{W}_{\bm{\sigma}^*})$ due to Mirsky's inequality (see \Cref{lem:mirsky} in Appendix~\ref{app:sec auxi}). This directly implies \eqref{eq:prop singular}. 

(ii)  Suppose that $L \ge 3$ and \eqref{eq:AS y} holds. 
For ease of exposition, let $\bm \Psi := \bm U_L^T\bm Y\bm V_1 \in \R^{d_L\times d_0}$ and we still partition $\bm \Psi$ into the form as in \eqref{eq2:prop singular}. Let $l \in [L]$ be such that $l \in \argmax\{\sigma_{r_{\sigma}+1}(\bm W_k):{k \in [L]}\}$. Define 
\begin{align*}
     \bm \Delta&\textstyle:=  \left(\prod_{k=l+1}^{L}\bm H_{k}\bm \Sigma_k^T \right) \bm \Psi \left(\prod_{k=1}^{l-1}\bm \Sigma_k^T \bm H_{k+1} \right)-\left(\prod_{k=l+1}^{L}\bm T_{k}\bm \Sigma_k^T\right) \bm \Psi \left(\prod_{k =1}^{l-1}\bm \Sigma_k^T \bm T_{k+1}\right)\\
     &\textstyle~=\sum_{j=l}^{L-1} \left( \prod_{k=l+1}^{j}\bm T_{k}\bm \Sigma_k^T \right)(\bm H_{j+1}-\bm T_{j+1})\bm \Sigma_{j+1}^T\left(\prod_{k=j+2}^{L}\bm H_{k}\bm \Sigma_k^T \right) \bm \Psi \left(\prod_{k=1}^{l-1}\bm \Sigma_k^T \bm H_{k+1} \right)\\
     &\textstyle\quad +\sum_{j=2}^l \left( \prod_{k=l+1}^{L}\bm T_{k}\bm \Sigma_k^T \right) \bm \Psi\left(\prod_{k=1}^{j-2}\bm \Sigma_k^T \bm T_{k+1} \right)\bm \Sigma_{j-1}^T(\bm H_j - \bm T_j)\left(\prod_{k=j}^{l-1}\bm \Sigma_k^T \bm H_{k+1}\right). 
 \end{align*}
Here, we remark that in the above summations, if the lower index exceeds the upper index, the corresponding product is interpreted as the identity matrix. Then substituting \eqref{eq:SVD Wl} into \eqref{eq:coro1} yields 
\begin{align*}
c_1\|\nabla G(\bm W)\|_F & \textstyle\ge \left\|(\bm\Sigma_l\bm\Sigma_l^T)^{L-1}\bm\Sigma_l-\sqrt{\lambda}\left(\prod_{k=l+1}^{L}\bm H_{k}\bm \Sigma_k^T \right)\bm \Psi \left(\prod_{k =1}^{l-1}\bm \Sigma_k^T\bm H_{k+1}\right) + \lambda\bm\Sigma_l \right\|_F \\
&\textstyle \ge \left\| (\bm\Sigma_l\bm\Sigma_l^T)^{L-1}\bm\Sigma_l-\sqrt{\lambda}\left(\prod_{k=l+1}^{L}\bm T_{k}\bm \Sigma_k^T\right) \bm \Psi \left(\prod_{k =1}^{l-1}\bm \Sigma_k^T \bm T_{k+1}\right) + \lambda\bm\Sigma_l\right\|_F - \sqrt{\lambda}\|\bm \Delta\|_F, 
\end{align*} 
where the second inequality uses the triangle inequality. 
Using \eqref{eq:sigmalowerbound} and \eqref{eq0:prop singular 2}, we obtain 
$$\|\bm\Delta\| \le \left( \frac{3}{2}\sigma_{\max}^* \right)^{L}\frac{3(L-1)y_1}{ 2\delta_{\sigma}\lambda\sigma^{*}_{\min}}  \|\nabla G(\bm W)\|_F.$$ This, together with the above inequality, yields 
\begin{align}
\textstyle \left\| (\bm\Sigma_l\bm\Sigma_l^T)^{L-1}\bm\Sigma_l-\sqrt{\lambda}\left(\prod_{k=l+1}^{L}\bm T_{k}\bm \Sigma_k^T\right) \bm \Psi \left(\prod_{k =1}^{l-1}\bm \Sigma_k^T \bm T_{k+1}\right) + \lambda\bm\Sigma_l \right\|_F \le \eta_2 \|\nabla G(\bm W)\|_F,
\label{eq:eta_2def}
\end{align}
where $\eta_2 := c_1 +  \left( \frac{3}{2}\sigma_{\max}^* \right)^{L}\frac{3(L-1)y_1}{ 2\delta_{\sigma}\sqrt{\lambda} \sigma^{*}_{\min}}$. Using this inequality, the block structures of $\bm \Sigma_l$ and $\bm T_l$, 
$l \in \argmax\{\sigma_{r_{\sigma}+1}(\bm W_k):{k \in [L]}\}$, and the fact that the Frobenius norm of a matrix is larger than or equal to its spectral norm, we obtain 
{\small
\begin{align}
& \eta_2 \|\nabla G(\bm W)\|_F\notag
 \\
 \ge&\left\| \left(\bm\Sigma_{l}^{(p+1)}\bm \Sigma_{l}^{(p+1)^T}\right)^{L-1}\bm \Sigma_{l}^{(p+1)} -\sqrt{\lambda}\left(\prod_{k=l+1}^{L} \bm T_{k}^{(p+1)} \bm \Sigma_{k}^{(p+1)^T}\right) 
\bm \Psi_{r_{\sigma}+1:d_L,r_{\sigma}+1:d_0}\left(\prod_{k=1}^{l-1} \bm \Sigma_{k}^{(p+1)^T} \bm T_{k+1}^{(p+1)}\right) + \lambda\bm \Sigma_{l}^{(p+1)}\right\| \notag \\
\ge &\  \left\| \left(\bm\Sigma_{l}^{(p+1)}\bm \Sigma_{l}^{(p+1)^T}\right)^{L-1}\bm \Sigma_{l}^{(p+1)} + \lambda\bm \Sigma_{l}^{(p+1)} \right\| - \sqrt{\lambda}\sigma_{r_{ \sigma}+1}^{L-1}(\bm W_l)\left\|\bm \Psi_{r_{\sigma}+1:d_L,r_{\sigma}+1:d_0}\right\| \notag \\
\ge &\ \lambda\sigma_{r_{ \sigma}+1}(\bm W_l)-\sqrt{\lambda}y_1\sigma_{r_{ \sigma}+1}^{L-1}(\bm W_l) 
\ge   \sqrt{\lambda}\left(\sqrt{\lambda}-y_{1}\sigma_{r_{\sigma}+1}^{L-2}(\bm W_l)\right) \sigma_{r_{\sigma}+1}(\bm W_l),
\notag
\end{align}
}
where \(\bm \Psi_{r_{\sigma}+1:d_L, r_{\sigma}+1:d_0}\) denotes the submatrix of \(\bm \Psi\) with the rows indexed from \(r_{\sigma}+1\) to \(d_L\) and the columns indexed from \(r_{\sigma}+1\) to \(d_0\), and the third inequality is due to $\|\bm \Psi_{r_{\sigma}+1:d_L, r_{\sigma}+1:d_0}\| \le \|\bm{Y}\| = y_1$. Using Weyl's inequality and \eqref{eq:dist 5}, we have
$\sigma_{r_{\sigma}+1}(\bm W_l) \le \mathrm{dist}\left(\bm W, \mathcal{W}_{\bm \sigma^*} \right) \le \left( \frac{\sqrt{\lambda}}{2y_1} \right)^{1/(L-2)}.$
This, together with the above inequality
, directly yields 
$
\sigma_{r_{\sigma}+1}(\bm W_l) \le  {2\eta_2}\|\nabla G(\bm W)\|_F/{\lambda}$, which implies \eqref{eq:singular 2}.
\end{proof}

The above proposition provides bounds for the $\min\{d_l,d_{l-1}\} - r_{\sigma}$ smallest singular values of $\bm W_l$. It remains to bound the top $r_{\sigma}$ singular values and all singular vectors. For convenience, we define an auxiliary function $\varphi : \mathbb{R}_{++} \;\to\; \mathbb{R}$:
\begin{align}\label{eq:phi}
\varphi(x) := \frac{x^{2L-1} + \lambda x}{\sqrt{\lambda}\, x^{L-1}}.
\end{align} 
\begin{prop}\label{prop:singular control}
Let $\bm \sigma^* \in \mathcal{A}_{\rm sort}\setminus\{\bm 0 \}$ be arbitrary and $\bm{W}$ be arbitrary such that 
\begin{align}
    &\mathrm{dist}(\bm{W}, \mathcal{W}_{\bm{\sigma}^*}) \le \min\left\{\delta_1, \delta_2,\frac{\delta_{\sigma}}{3}\right\}\quad \text{and}\quad \|\nabla G(\bm{W})\|_F \leq \frac{\delta_y\sqrt{\lambda} (\sigma^{*}_{\min})^{L-1}}{3 \cdot 2^{L-1} \sqrt{\eta_3^2 + \eta_4^2}}, \label{eq:gradient_condition}
\end{align}      
where
\begin{align}
        \delta_1& := \frac{\delta_y}{{3L}\left(\frac{4\sigma_{\max}^*}{3}\right)^{L-1}\frac{1}{\sqrt{\lambda}}+3(L-2)\sqrt{\lambda}\left(\frac{2\sigma_{\min}^*}{3}\right)^{1-L}},\notag \\    \delta_2 &:= \frac{\min_{i\in [r_{\sigma}]}|\varphi'(\sigma_i^*)|}{{2L(L-1)}\left(\frac{4\sigma_{\max}^*}{3}\right)^{L-1}\frac{1}{\sqrt{\lambda}}+2\sqrt{\lambda}(2-L)(1-L)\left(\frac{2\sigma_{\min}^*}{3}\right)^{-L}},\label{eq:delta}\\
        \ \eta_3& := c_2 +  c_3\sqrt{d_{\max}} \left(\left( \frac{3\sigma_{\max}^*}{2} \right)^{2(L-1)}\!\!\!\! + \sqrt{\lambda}y_1 \left( \frac{3\sigma_{\max}^*}{2} \right)^{L-2} \!\!\!\!+ \lambda \right),\notag\\ 
    \eta_4 &:= \eta_3 
        + \frac{p \eta_1 (2L - 1)L}{\sigma^*_{\min}} 
            \left( \frac{3 \sigma^*_{\max}}{2} \right)^{2L - 2}\! \!\!\! 
        + \frac{\lambda p \eta_1 L}{\sigma^*_{\min}}. \label{eq:eta34}
 \end{align}        
Suppose in addition Assumptions~\ref{AS:1} and  \ref{AS:2} hold,  \eqref{eq:dist grad 1} holds for $L=2$, and \eqref{eq:dist 5} holds for $L \ge 3$. \\
(i) There exist orthogonal matrices $\hat{\bm U}_{L}^{(i)} \in \mathcal{O}^{h_i}$ for $i \in [p_Y]$, $\hat{\bm U}_{L}^{(p_Y+1)} \in \mathcal{O}^{d_L-r_Y}$, $\hat{\bm V}_{L}^{(p_Y+1)} \in \mathcal{O}^{d_0-r_Y}$, $\bm T_{l}^{(i)} \in \mathcal{O}^{g_i}$ for $i \in [p]$, $\bm P \in \mathcal{O}^{d_L - r_{\sigma}}$, $\bm Q \in \mathcal{O}^{d_0 - r_{\sigma}}$, and a permutation matrix $\bm \Pi \in \mathcal{P}^{d_{\min}}$ with $(\bm \sigma^*, \bm \Pi^T) \in \mathcal{B}$ such that 
\begin{align}\label{eq:singular vectors}
  \|\tilde{\bm U}_L - \bm U_L \|_F \le c_4\|\nabla G(\bm W)\|_F\quad\text{ and } \quad \|\tilde{\bm V}_1 - \bm V_1\|_F \le c_4\|\nabla G(\bm W)\|_F,
\end{align}
where 
\begin{align} 
& \eta_5 := \frac{2^{L+1}(6y_1+\delta_y)(p_Y+1)\sqrt{\eta_3^2+\eta_4^2}}{3\sqrt{\lambda}y_{s_{p_Y}}\delta_y \sigma^{* L-1}_{\min}},\ 
    c_4 := \eta_5 + \frac{1}{y_{s_{p_Y}}}\left(\frac{2^{L}\sqrt{\eta_3^2+\eta_4^2}}{\sqrt{\lambda} \sigma^{*L-1}_{\min}}+2y_1\eta_5\right) \label{eq:upper_v}\\
& \tilde{\bm U}_L := \mathrm{BlkD}\left(\hat{\bm U}_{L}^{(1)},\dots,\hat{\bm U}_{L}^{(p_Y)},\hat{\bm U}_{L}^{(p_Y+1)} \right)\mathrm{BlkD}\left(\bm \Pi, \bm I_{d_L-d_{\min}}\right)\mathrm{BlkD}\left(\bm I_{r_{\sigma}}, \bm P^T \right), \label{eq:UL}\\
& \tilde{\bm V}_1 := \mathrm{BlkD}\left(\hat{\bm U}_{L}^{(1)},\dots,\hat{\bm U}_{L}^{(p_Y)},\hat{\bm V}_{L}^{(p_Y+1)} \right)\mathrm{BlkD}\left(\bm \Pi, \bm I_{d_0-d_{\min}}\right)\mathrm{BlkD}\left(\bm I_{r_{\sigma}}, \bm Q^T \right) \hat{\bm T}, \label{eq:V1}\\
&  \hat{\bm T} := \blk\left(\prod_{l=2}^L\bm T_{l}^{(1)},\cdots,\prod_{l=2}^L\bm T_{l}^{(p)},\bm I_{d_0-r_{\sigma}}\right).\label{eq:T}
\end{align}
(ii) For each $l \in [L]$, there exists a constant $c_5 > 0$  such that 
\begin{align}\label{eq:singular 1}
| \sigma_i\left( \bm W_l) - \sigma^*_{i} \right| \le c_5\|\nabla G(\bm W)\|_F,\ \forall i \in [r_{\sigma}],
\end{align}
where
\begin{align}\label{eq:c5} 
    c_5 := \frac{2^{L}\sqrt{ \eta_3^2 + \eta_4^2 }}{\sqrt{\lambda}\sigma_{\min}^{* {L-1}}\min_{i \in [r_{\sigma}] } \left|\varphi^\prime(\sigma_i^*)\right| }  + \frac{3\sqrt{2}(L-1)\sigma_{\max}^*}{4\lambda\sigma_{\min}^*}. 
\end{align}
\end{prop}
\begin{proof}
According to \eqref{eq:dist 7} and \eqref{eq:dist 5}, we have $\mathrm{dist}(\bm W, \mathcal{W}_{\bm{\sigma}^*}) \le \delta_{\sigma}/3 < \sigma_{\min}^*/2$. This, together with \Cref{lem:pre} and \Cref{coro1}, implies that \eqref{eq:sigmalowerbound}-\eqref{eq:wl+1} and \eqref{eq:coro1} hold.  According to \Cref{prop:Hlapproximate} and \Cref{lem:twoinequality} with $\mathrm{dist}(\bm W, \mathcal{W}_{\bm{\sigma}^*}) \le \delta_{\sigma}/3$, there exist matrices $\bm T_{l}^{(i)} \in \mathcal{O}^{g_i}$ for each $i \in [p]$ and $\bm T_{l}^{(p+1)} \in \mathcal{O}^{d_{l-1} - r_{\sigma}}$ such that \eqref{eq:twoinequality2} and \eqref{eq:twoinequality1} hold, where $\bm A_{i}$ and $\bm B_i$ for $i \in [p+1]$ are defined in \eqref{eq:A} and \eqref{eq:B}. Based on \eqref{eq:dist grad 1} (resp., \eqref{eq:dist 5}) and \Cref{prop:singular value 2}, we have \eqref{eq:prop singular} (resp., \eqref{eq:singular 2}). Recall that \eqref{eq:SVD Wl} denotes an SVD of $\bm W_l$ for $l \in [L]$. Let 
\begin{align}\label{eq0:prop sing}
\bm \Psi := \bm U_L^T\bm Y\bm V_1,\qquad  \hat{\bm \Sigma}_l := \mathrm{BlkD}\left(\bm \Sigma_{l}^{(1)},\dots,\bm \Sigma_{l}^{(p)}, \bm 0_{(d_l-r_{\sigma})\times(d_{l-1} - r_{\sigma})} \right),\ \forall l \in [L].
\end{align}  
(i) We first prove the first inequality in \eqref{eq:singular vectors}. We claim that the following two inequalities hold:
\begin{align}
\sum_{i=1}^{r_{\sigma}} \left\| \varphi(\sigma_i(\bm W_L))\bm e_i - \bm \Psi \hat{\bm T} \bm e_i\right\|^2 &\le  \frac{4^{L-1}\eta_3^2}{\lambda(\sigma_{\min}^{*})^ {2L-2}} \|\nabla G(\bm W)\|_F^2,\label{eqi11:prop sing} \\
\sum_{i=1}^{r_{\sigma}} \left\| \varphi(\sigma_i(\bm W_1))\bm e_i^T - \bm e_i^T\bm \Psi \hat{\bm T} \right\|^2 
& \le  \frac{4^{L-1}\eta_4^2}{\lambda(\sigma_{\min}^*)^{{2L-2}}}   \|\nabla G(\bm W)\|_F^2.\label{eqi12:prop sing}
\end{align} 
To maintain the flow of the main proof, the proof of this claim is deferred to Appendix~\ref{sec:comlementary1}. Now, let $\bm \alpha :=  \left(\varphi(\sigma_1(\bm W_L)),\dots,\varphi(\sigma_{r_{\sigma}}(\bm W_L)) \right) \in \R^{r_{\sigma}}$ and  
$\bm Z := \blk\left(\mathrm{diag}(\bm \alpha), (\bm \Psi \hat{\bm T})_{r_{\sigma}+1:d_L,r_{\sigma}+1:d_0}\right)$. 
We have
\begin{align}\label{eq7:prop sing}
\left\|\bm \Psi \hat{\bm T} - \bm Z \right\|_F^2 & = \sum_{i=1}^{r_{\sigma}} \left\|( \bm \Psi \hat{\bm T}  - \bm Z)\bm e_i \right\|^2 + \sum_{i=r_{\sigma}+1}^{d_0} \|(\bm \Psi \hat{\bm T} - \bm Z )\bm e_i \|^2 \notag \\
& = \sum_{i=1}^{r_{\sigma}} \left\| \bm \Psi \hat{\bm T} \bm e_i - \varphi(\sigma_i(\bm W_L))\bm e_i \right\|^2 +  \left\| \left(\bm \Psi \hat{\bm T}\right)_{1:r_{\sigma},r_{\sigma}+1:d_0} \right\|_F^2 \notag\\
& \le \sum_{i=1}^{r_{\sigma}} \left\| \bm \Psi \hat{\bm T} \bm e_i - \varphi(\sigma_i(\bm W_L))\bm e_i \right\|^2 + \sum_{i=1}^{r_{\sigma}} \left\|  \bm e_i^T\bm \Psi \hat{\bm T}- \varphi(\sigma_i(\bm W_1))\bm e_i^T \right\|^2 \notag \\
& \overset{(\ref{eqi11:prop sing}, \ref{eqi12:prop sing})}{\le}  \frac{4^{L-1}\left( \eta_3^2 + \eta_4^2 \right)}{\lambda(\sigma_{\min}^*)^{{2L-2}}}  \|\nabla G(\bm W)\|_F^2,
\end{align}
where the second equality uses the structure of $\bm Z$ and the first inequality is due to
$$
    \left\|  \bm e_i^T\bm \Psi \hat{\bm T}- \varphi(\sigma_i(\bm W_1))\bm e_i^T \right\|  \ge \| (\bm \Psi \hat{\bm T})_{i,r_{\sigma}+1:d_0} \|\  \text{for all}\ \  i \in [r_{\sigma}]. 
$$ 

Let 
$ 
 (\bm \Psi \hat{\bm T})_{r_{\sigma}+1:d_L,r_{\sigma}+1:d_0}  = \bm P \bm \Lambda \bm Q^T
$ 
be an SVD of $(\bm \Psi \hat{\bm T})_{r_{\sigma}+1:d_L,r_{\sigma}+1:d_0} $, where 
$\bm \Lambda \in \mathbb{R}^{(d_L - r_{\sigma}) \times (d_0 - r_{\sigma})}$ is diagonal with 
entries $\gamma_1 \ge \dots \ge \gamma_{d_{\min} - r_{\sigma}}$ being the top $d_{\min} - r_{\sigma}$ singular values.  To proceed, we define 
\begin{align}\label{eq:c}
\bm \beta := \left(\varphi(\sigma_1(\bm W_L)),\dots,\varphi(\sigma_{r_{\sigma}}(\bm W_L)),\gamma_1,\dots, \gamma_{d_{\min}- r_{\sigma}}\right) \in \R^{d_{\min}}.
\end{align}
Let $\bm \Pi \in \mathcal{P}^{d_{\min}}$ be a permutation matrix such that the entries $\bm \Pi\bm \beta$ are in a decreasing order and 
\begin{equation}\label{eq:foldef}
\begin{aligned}
& \bar{\bm U}_L := \bm U_L\blk\left(\bm I_{r_{\sigma}}, \bm P\right)\blk\left(\bm \Pi^T, \bm I_{d_L-d_{\min}}\right),\\
&\bar{\bm V}_1 := \bm V_1 \hat{\bm T} \blk\left(\bm I_{r_{\sigma}}, \bm Q\right)\blk\left(\bm \Pi^T,\bm I_{d_0-d_{\min}}\right),\\
&\bar{\bm Y} := \blk\left(\bm\Pi \mathrm{diag}\left(\bm \beta\right)\bm \Pi^T,\bm 0_{(d_L-d_{\min})\times (d_0 - d_{\min}) }\right).
\end{aligned}
\end{equation} 
Since both $\bm{Y}$ and $\bar{\bm{Y}}$ are diagonal matrices with elements in decreasing order, by using Mirsky's inequality (see \Cref{lem:mirsky} in Appendix~\ref{app:sec auxi}), we have
\begin{align}\label{eq8:prop sing}
\|\bm Y - \bar{\bm Y}\|_F \le \|\bar{\bm U}_L^T \bm Y \bar{\bm V}_1 - \bar{\bm Y}\|_F = \| \bm \Psi \hat{\bm T} - \bm Z\|_F\overset{\eqref{eq7:prop sing}}{\le} \frac{2^{L-1}}{\sqrt{\lambda}\sigma_{\min}^{* {L-1}}} \sqrt{ \eta_3^2 + \eta_4^2 } \|\nabla G(\bm W)\|_F,
\end{align}
where the equality follows from \eqref{eq0:prop sing}, \eqref{eq:c}, and \eqref{eq:foldef}. This, together with \eqref{eq:gradient_condition}, implies  
\begin{align}\label{eq9:prop sing}
\|\bm Y - \bar{\bm Y}\|_F \le \frac{\delta_y}{3},\qquad \|\bar{\bm Y}\| \le \|\bar{\bm Y} - \bm Y\|  + \|\bm Y\|  \le y_1 + \frac{\delta_y}{3}. 
\end{align}\smallskip

Using the triangle inequality, we obtain  
\begin{align}
    &\quad\quad\|\bar{\bm U}_L^T \bm Y \bm Y^T \bar{\bm U}_L - \bar{\bm Y} \bar{\bm Y}^T\|_F\notag \\
    &~~~\le\|(\bar{\bm U}_L^T \bm Y \bar{\bm V}_1 - \bar{\bm Y}) \bar{\bm V_1}^T \bm Y^T \bar{\bm U}_L\|_F + \|\bar{\bm Y} (\bar{\bm U}_L^T \bm Y \bar{\bm V}_1 - \bar{\bm Y})^T\|_F \notag\\
    & \overset{(\ref{eq8:prop sing}, \ref{eq9:prop sing})}{\le} \left( 2y_1 + \frac{\delta_y}{3} \right) \frac{2^{L-1}\sqrt{\eta_3^2+\eta_4^2}}{\sqrt{\lambda}\sigma^{* {L-1}}_{\min}} \|\nabla G(\bm W)\|_F.\label{eq:control theorem}
\end{align} 
Next, we write \(\bar{\bm{U}}_L\) and \(\bm{I}_{d_L}\) as follows:  
\begin{align*}
&\bar{\bm{U}}_L = \begin{bmatrix}
   (\bar{\bm{U}}_{L}^{(1)})^T & \cdots & (\bar{\bm{U}}_{L}^{(p_Y)})^T &( \bar{\bm{U}}_{L}^{(p_Y+1)})^T
\end{bmatrix}^{\!T},\ 
\bm{I}_{d_L} = \begin{bmatrix}
   (\bm{E}^{(1)} )^T& \cdots & (\bm{E}^{(p_Y)})^T & (\bm{E}^{(p_Y+1)})^T
\end{bmatrix}^{\!T}, \\
&\bm{E}^{(1)} = [\, \bm I_{h_1}\ \ \bm 0_{h_1 \times (d_L - s_1)} \,],\
\bm{E}^{(i)} = [\, \bm 0_{h_i \times s_{i-1}}\ \ \bm I_{h_i}\ \ \bm 0_{h_i \times (d_L - s_i)} \,],\ \bm{E}^{(p_Y+1)} = [\, \bm 0_{(d_L -r_Y) \times r_Y}\ \ \bm I_{d_L - r_Y} \,],
\end{align*}
where \(\bar{\bm{U}}_{L}^{(i)},\ \bm{E}^{(i)} \in \mathbb{R}^{h_i \times d_L }\) for all \(i \in [p_Y]\), and 
\(\bar{\bm{U}}_{L}^{(p_Y+1)},\ \bm{E}^{(p_Y+1)} \in \mathbb{R}^{(d_L - r_Y) \times d_L}\). Applying the Davis-Kahan theorem (see \Cref{lem:daviskahan} in Appendix~\ref{app:sec auxi}) to the matrices 
$\bar{\bm{U}}_L^T \bm{Y} \bm{Y}^T \bar{\bm{U}}_L$ (considered as $\bm{A}$), 
$\bar{\bm{Y}} \bar{\bm{Y}}^T$ (considered as $\hat{\bm{A}}$), 
$\bar{\bm{U}}_L^{(i)^T}$ (considered as $\bm{V}$) , 
and $\bm{E}^{(i)^T}$ (considered as $\hat{\bm{V}}$) for each $i \in [p_Y+1]$, there exist orthogonal matrices 
\(\hat{\bm{U}}_{L}^{(i)} \in \mathcal{O}^{h_i}\) for each \(i \in [p_Y]\) 
and \(\hat{\bm{U}}_{L}^{(p_Y+1)} \in \mathcal{O}^{d_L - r_Y}\) such that  
\begin{align}\label{eq:bddaviskahan}
    \left\| \bar{\bm{U}}_{L}^{(i)} - \hat{\bm{U}}_{L}^{(i)}\bm{E}^{(i)}  \right\|_F 
    \leq  \frac{4\|\bar{\bm U}_L^T \bm Y \bm Y^T \bar{\bm U}_L - \bar{\bm Y} \bar{\bm Y}^T\|_F}{\min\{\lambda_{i-1}-\lambda_{i},\lambda_i-\lambda_{i+1} \}},\ \forall i\in[p_{Y}+1], 
\end{align}
where $\lambda_i$ is the $i$-th largest non-repeated eigenvalue of $\bar{\bm{U}}_L^T \bm{Y} \bm{Y}^T \bar{\bm{U}}_L$ and $\lambda_0=\infty$ and $\lambda_{p_Y+2}=-\infty$ by convention.
We compute $\lambda_i=y_{s_i}^2$ for $i\in[p_Y]$ and $\lambda_{p_Y+1}=0$ for $i\in[p_Y]$.
Using \eqref{eq:control theorem} and the following inequalities
\begin{align*} 
    &y_{s_j}^2 - y_{s_j+1}^2= (y_{s_j} - y_{s_j+1})(y_{s_j} + y_{s_j+1})\ge \delta_y y_{s_{p_Y}},\ \forall j \in [p_Y],\\
    &\min\left\{y_{s_i}^2 - y_{s_i+1}^2, y_{s_{i+1}}^2 - y_{s_{i+1}+1}^2 \right\}\ge \delta_y y_{s_{p_Y}},\ \forall i \in [p_Y-1], 
\end{align*}
we have from \eqref{eq:bddaviskahan} that
$ 
     \| \bar{\bm{U}}_{L}^{(i)} - \hat{\bm{U}}_{L}^{(i)}\bm{E}^{(i)} \|_F 
    \leq  \frac{2^{L+1}(6y_1+\delta_y)\sqrt{\eta_3^2+\eta_4^2}}{3\sqrt{\lambda}y_{s_{p_Y}}\delta_y \sigma^{* L-1}_{\min}} \|\nabla G(\bm{W})\|_F.
$ 
Therefore, we have
\begin{align}\label{eq10:prop sing}
 \left\|\bar{\bm U}_L\! -\! \blk\left(\hat{\bm U}_{L}^{(1)},\dots,\hat{\bm U}_{L}^{(p_Y+1)} \right) \right\|_F &\! \le\! \sum_{i=1}^{p_Y+1} \left\| \bar{\bm U}_{L}^{(i)} -\hat{\bm U}_{L}^{(i)}\bm E^{(i)}\right\|_F \le \eta_5  \|\nabla G(\bm W)\|_F.
\end{align}
According to definition of $\tilde{\bm U}_L$ in \eqref{eq:UL}, we have
\begin{align*}
 \left\| \bm U_L - \tilde{\bm U}_L \right\|_F 
  & \overset{\eqref{eq:foldef}}{=} \left\| \bar{\bm U}_L - \mathrm{BlkD}\left(\hat{\bm U}_{L}^{(1)},\dots,\hat{\bm U}_{L}^{(p_Y)},\hat{\bm U}_{L}^{(p_Y+1)} \right) \right\|_F \\
  & \overset{\eqref{eq10:prop sing}}{\le} \eta_5  \|\nabla G(\bm W)\|_F\le c_4  \|\nabla G(\bm W)\|_F.
\end{align*}\smallskip

Next we prove the second inequality in \eqref{eq:singular vectors}.
Using the same argument in \eqref{eq:control theorem}-\eqref{eq10:prop sing} to $\bar{\bm V}_1^T \bm Y^T \bm Y \bar{\bm V}_1$ and $\bar{\bm Y}^T \bar{\bm Y}$, we conclude that there exist orthogonal matrices $\hat{\bm V}_{1}^{(i)} \in \mathcal{O}^{h_i}$ for all $i \in [p]$ and $\hat{\bm V}_{1}^{(p+1)} \in \mathcal{O}^{d_0-r_Y}$ such that
\begin{align}
    \left\|\bar{\bm V}_1 - \mathrm{BlkD}\left(\hat{\bm V}_{1}^{(1)},\dots,\hat{\bm V}_{1}^{(p_Y)},\hat{\bm V}_{1}^{(p_Y+1)} \right) \right\|_F \le \eta_5\|\nabla G(\bm W)\|_F.  
    \label{eq:boundv}
\end{align} 
Using the block structure of $\bm Y$ in \eqref{eq:SY1}, we have
{\small
\begin{align}
&\ \left\|\blk\left(\hat{\bm U}_{L}^{(1)},\dots,\hat{\bm U}_{L}^{(p_Y)} \right)-\mathrm{BlkD}\left(\hat{\bm V}_{1}^{(1)},\dots,\hat{\bm V}_{1}^{(p_Y)} \right) \right\|_F\notag \\
\le &\ \frac{1}{y_{s_{p_Y}}} \left\|\blk\left(y_{s_1}\hat{\bm U}_{L,1}\hat{\bm V}_{1,1}^T,\dots,y_{s_{p_Y}}\hat{\bm U}_{L,p_Y}\hat{\bm V}_{1,p_Y}^T\right)- \blk\left(y_{s_1}\bm I_{h_1}, \dots, y_{s_{p_Y}} \bm I_{h_{p_Y}} \right) \right\|_F\notag \\
=&\ \frac{1}{y_{s_{p_Y}}} \left\|\mathrm{BlkD}\left(\hat{\bm U}_{L}^{(1)},\dots,\hat{\bm U}_{L}^{(p_Y)},\hat{\bm U}_{L}^{(p_Y+1)} \right)\bm Y\mathrm{BlkD}\left(\hat{\bm V}_{1}^{(1)},\dots,\hat{\bm V}_{1}^{(p_Y)},\hat{\bm V}_{1}^{(p_Y+1)} \right)^T - \bm Y\right\|_F \notag\\
  \le &\ \frac{1}{y_{s_{p_Y}}}\left( \left\|\left(\mathrm{BlkD}\left(\hat{\bm U}_{L}^{(1)},\dots,\hat{\bm U}_{L}^{(p_Y)},\hat{\bm U}_{L}^{(p_Y+1)} \right) -\bar{\bm U}_L\right)\bm Y \mathrm{BlkD}\left(\hat{\bm V}_{1}^{(1)},\dots,\hat{\bm V}_{1}^{(p_Y)},\hat{\bm V}_{1}^{(p_Y+1)} \right)^T \right\|_F \right.\notag\\ 
&\qquad~~~ \left. +\left\|\bar{\bm U}_L \bm Y\left(\mathrm{BlkD}\left(\hat{\bm V}_{1}^{(1)},\dots,\hat{\bm V}_{1}^{(p_Y)},\hat{\bm V}_{1}^{(p_Y+1)} \right) - \bar{\bm V}_1 \right)^T \right\|_F  + \|\bar{\bm U}_L \bm Y \bar{\bm V}_1^T - \bar{\bm Y}\|_F +\|\bar{\bm Y} - \bm Y\|_F \right)\notag\\
\le &\ \frac{1}{y_{s_{p_Y}}}\left(\frac{2^{L}\sqrt{\eta_3^2+\eta_4^2}}{\sqrt{\lambda} \sigma^{*L-1}_{\min}}+2y_1\eta_5\right)\|\nabla G(\bm W)\|_F.
\notag
\end{align}
}
where the last inequality is due to \eqref{eq8:prop sing}, \eqref{eq10:prop sing}, and \eqref{eq:boundv}.
Using the definition of $\tilde{\bm V}_1$, we compute 
\begin{align*}
&~~~~\|\bm V_1- \tilde{\bm V}_1\|_F  = \left\| \bar{\bm V}_1 - \blk\left(\hat{\bm U}_{L}^{(1)},\dots,\hat{\bm U}_{L}^{(p_Y)},\hat{\bm V}_{1}^{(p_Y+1)} \right) \right\|_F  \notag\\
&\le \left\| \bar{\bm V}_1 - \mathrm{BlkD}\left(\hat{\bm V}_{1}^{(1)},\dots,\hat{\bm V}_{1}^{(p_Y)},\hat{\bm V}_{1}^{(p_Y+1)} \right) \right\|_F\\
&\quad +  \left\|\mathrm{BlkD}\left(\hat{\bm V}_{1}^{(1)},\dots,\hat{\bm V}_{1}^{(p_Y)} \right) - \blk\left(\hat{\bm U}_{L}^{(1)},\dots,\hat{\bm U}_{L}^{(p_Y)} \right) \right\|_F \notag\\
& 
\overset{(\ref{eq:boundv})}
{\le} c_4\|\nabla G(\bm W)\|_F.
\end{align*}
That is, the second inequality in \eqref{eq:singular vectors} holds.\medskip

The rest of the proof is devoted to showing that $\bm \Pi$ satisfies $(\bm \sigma^*, \bm \Pi^T) \in \cal B$. According to \eqref{eq9:prop sing} and the definition of $\bar{\bm Y}$ in \eqref{eq:foldef}, we have $\left\| \bm \beta -  \bm \Pi^T\bm y \right\| \le {\delta_y}/{3},$ where $\bm y := (y_1,y_2,\dots,y_{d_{\min}})$. This implies that there exists a permutation $\pi:[d_{\min}] \to [d_{\min}]$ such that 
\begin{align}\label{eq12:prop sing}
\left|\beta_{i} - y_{\pi^{-1}(i)}\right| \le \frac{\delta_y}{3},\ \forall i \in [d_{\min}].
\end{align} 
Using Weyl's inequality and 
\eqref{eq:gradient_condition}, 
we have
\begin{align*}
\left| \sigma_i(\bm W_L) - \sigma_i^* \right| \le \mathrm{dist}(\bm W, \mathcal{W}_{\bm \sigma^*}) \le  \min\left\{\delta_1,\frac{\delta_\sigma}{3}\right\}, 
\end{align*}
This, together with \eqref{eq:c} and \Cref{lem:derivative bounds} in Appendix~\ref{app:lem deri}, implies 
\begin{align}\label{eq13:prop sing2}
\left| \beta_i - \varphi(\sigma_i^*) \right|  =  \left| \varphi(\sigma_i(\bm W_L) ) - \varphi(\sigma_i^*) \right| \le \frac{\delta_y}{3},\ \forall i \in [r_{\sigma}].
\end{align}
According to \eqref{set:A}, there exists $k_i \in [d_{\min}]$ such that $\varphi(\sigma_i^*) = y_{k_i}$ 
for all $i \in [r_{\sigma}]$. For all $i \in [r_{\sigma}]$, we have
\begin{align*}
\left| y_{k_i} - y_{\pi^{-1}(i)} \right| = \left| \varphi(\sigma_i^*) - y_{\pi^{-1}(i)} \right| \le \left| \varphi(\sigma_i^*) - \beta_i \right| + \left| \beta_i - y_{\pi^{-1}(i)} \right| \overset{(\ref{eq12:prop sing}, \ref{eq13:prop sing2})}{\le} \frac{2\delta_y}{3}.
\end{align*}
Combining this with the definition of $\delta_y$ in \eqref{eq:delta y} yields $\varphi(\sigma_i^*) = y_{k_i} = y_{\pi^{-1}(i)}$ for all $i \in [r_{\sigma}]$.
Thus, we have
\begin{align*}
    (\sigma^*_i)^{2L-1} + \lambda \sigma^*_i - \sqrt{\lambda} y_{\pi^{-1}(i)} (\sigma^*_i)^{{L-1}}  = 0,\ \forall i \in [r_{\sigma}]. 
\end{align*}
For each $i \in \{r_{\sigma}+1,\dots, d_{\min}\}$, we have $\sigma^*_i = 0$, and it is trivial to see that the above equation holds. Thus,
we have $(\sigma_{\pi(i)}^*)^{2L - 1} - \sqrt{\lambda}\, y_{i}\, (\sigma_{\pi(i)}^*)^{L - 1} + \lambda\, \sigma^*_{\pi(i)} = 0,\ \sigma_{\pi(i)}^* \geq 0,\ \forall i \in [d_{\min}].$ This, together with the definition of $\mathcal{A}$ in \eqref{set:A}, yields $\bm{a} := \bm{\Pi}  \bm{\sigma}^* = \left(\sigma_{\pi(1)}^*,\ldots,\sigma_{\pi(d_{\min})}^*\right)\in \mathcal{A}$. 
    It follows from $\bm{\sigma}^*=\bm{\Pi}^T \bm{a}$ and the definition of $\mathcal{B}$ in \eqref{set:B} that $(\bm{\sigma}^*, \bm{\Pi}^T) \in \mathcal{B}$.\medskip

(ii) Using \eqref{eq:c}  and the fact that $y_{\pi^{-1}(i)} = \varphi(\sigma_i^*)$ for each $i \in [r_{\sigma}]$, we have 
\begin{align}\label{eq14:prop sing}
  \left| \varphi(\sigma_i(\bm W_L)) - \varphi(\sigma_i^*)  \right| & = \left| \beta_i - y_{\pi^{-1}(i)}\right| \le \|\bm \beta - \bm \Pi^T \bm y\| = \|\bm \Pi\bm \beta - \bm y\| \notag \\
  & = \|\bar{\bm Y} - \bm Y\|_F \overset{\eqref{eq8:prop sing}}{\le} \frac{2^{L-1}}{\sqrt{\lambda}\sigma_{\min}^{* {L-1}}} \sqrt{ \eta_3^2 + \eta_4^2 } \|\nabla G(\bm W)\|_F.
\end{align}
Under Assumption~\ref{AS:2}, by \Cref{lem:phi} in Appendix~\ref{subsec:A2}, we have 
$\varphi'(\sigma_i^*) \neq 0$ for each $i \in [r_\sigma]$, so that $\min_{i \in [r_{\sigma}] } \left|\varphi^\prime(\sigma_i^*)\right| >0$. 
This, together with \Cref{lem:derivative bounds} (see Appendxi~\ref{app:lem deri}), yields that for all $i \in [r_\sigma]$,
\begin{align}\label{eq15:prop sing}
\left|\varphi^\prime(x) - \varphi^\prime(\sigma_i^*) \right| 
\le \frac{|\varphi^\prime(\sigma_i^*)|}{2}, 
\quad \text{if } |x-\sigma_i^*|\le \min\left\{\delta_1,\delta_2,\frac{\delta_\sigma}{3}\right\}. 
\end{align}
By Weyl's inequality, we have
\begin{align}\label{eq16:prop sing}
\left| \sigma_i(\bm W_L) - \sigma_i^* \right| \le \mathrm{dist}(\bm W, \mathcal{W}_{\bm \sigma^*}) \overset{\eqref{eq:gradient_condition}}{\le} \delta_2. 
\end{align}
Applying the mean value theorem to $\varphi(\cdot)$, there exists $x$ satisfying $\sigma_i(\bm W_L)\le x\le \sigma_i^* $  such that 
\begin{align}
  \left|\varphi(\sigma_i(\bm W_L)) - \varphi(\sigma_i^*) \right| = \left|\varphi^\prime(x)\right| \left| \sigma_i(\bm W_L) - \sigma_i^* \right| \ge \frac{\left|\varphi^\prime(\sigma_i^*)\right|}{2} \left| \sigma_i(\bm W_L) - \sigma_i^* \right|, 
  \notag
\end{align}
where the inequality follows from  \eqref{eq15:prop sing} and $|x-\sigma_i^*| \le \left| \sigma_i(\bm W_L) - \sigma_i^* \right| \le \delta_2$ due to \eqref{eq16:prop sing}. This, together with \eqref{eq14:prop sing}, yields for all $i \in [r_{\sigma}]$, 
\begin{align*}
\left| \sigma_i(\bm W_L) - \sigma_i^* \right| \le \frac{2^{L}\sqrt{ \eta_3^2 + \eta_4^2 }}{\sqrt{\lambda}\sigma_{\min}^{* {L-1}}\min_{i \in [r_{\sigma}] } \left|\varphi^\prime(\sigma_i^*)\right| }  \|\nabla G(\bm W)\|_F. 
\end{align*} 
Using this and \eqref{eq:wl+1} in \Cref{lem:pre}, we obtain the desired \end{proof}

Now we are ready to prove the error bound for Problem~\eqref{eq:G}.
\begin{thm}\label{thm:eb G}
Suppose that Assumptions~\ref{AS:1}  and ~\ref{AS:2} hold. Let $\bm \sigma^* \in \mathcal{A}_{\rm sort}\setminus\{\bm 0 \}$ be arbitrary and $\bm W$ be arbitrary such that $\mathrm{dist}(\bmw, \mathcal{W}_{\bm \sigma^*}) \le \epsilon_{\bm \sigma^*}$. Then we have
\begin{align}
\label{eq:eb G}
    \mathrm{dist}(\bmw , \mathcal{W}_{\bm \sigma^*})\le \kappa_{\bm \sigma^*}\|\nabla G(\bmw)\|_F,
\end{align}
where
\begin{align}
\epsilon_{\bm\sigma^*} &:=
\begin{cases}
\min\!\left\{
\begin{array}{@{}l@{}}
\frac{\delta_{\sigma}}{3},\; \delta_1,\; \delta_2,\;
\frac{\delta_y \sqrt{\lambda}\, (\sigma^{*}_{\min})^{L-1}}
         {3 \cdot 2^{L-1} L_G \sqrt{\eta_3^2 + \eta_4^2}}, \\[1.0ex]
\quad \frac{\sqrt{\lambda}}{\sqrt{3(\sqrt{\lambda}+y_1)}}
\Biggl(\min\!\bigl\{\,\min_{i \in [s_{p_Y}]}\lvert\sqrt{\lambda}-y_i\rvert,\; \sqrt{\lambda}\,\bigr\}\Biggr)^{\!1/2}, \\[1.2ex]
\quad \frac{\sqrt{2\lambda}\,
  \min\!\bigl\{\,\min_{i \in [s_{p_Y}]}\lvert\sqrt{\lambda}-y_i\rvert,\; \sqrt{\lambda}\,\bigr\}\,
  \sigma^*_{\min}}
     {12\,c_2\,L_G}
\end{array}
\right\}, &\text{if}\ L = 2, \\[10ex]
\min\!\Biggl\{ 
   \frac{\delta_{\sigma}}{3},\; \delta_1,\; \delta_2,\;
   \frac{\delta_y \sqrt{\lambda}\, (\sigma^{*}_{\min})^{L-1}}
         {3 \cdot 2^{L-1} L_G \sqrt{\eta_3^2 + \eta_4^2}},\;
   \left( \frac{\sqrt{\lambda}}{2y_1} \right)^{\!\frac{1}{L-2}}
\Biggr\}, &\text{if}\ L \ge 3, 
\end{cases} \label{def:epsilon_kappa_2} \\[1.5ex]
\kappa_{\bm \sigma^*} &:= 
\sqrt{L}\left(
   \frac{9\sigma_{\max}^{*2}}{4\delta_{\sigma}\lambda\sigma^{*}_{\min}}
   + c_3\sqrt{d_{\max} - r_{\sigma}}
   + c_4\sigma_{\max}^*
   + c_5\sqrt{r_{\sigma}}
\right).\label{def:kappa_2}
\end{align}
All the explicit expressions of the constants above can be found in Table~\ref{tab:notations}.
\end{thm} 
\begin{proof}
 By the fact that $\mathrm{dist}(\bmw,\mathcal{W}_{\bm \sigma^*}) \le \epsilon_{\bm \sigma^*}$, 
together with \Cref{prop:Hlapproximate}, we obtain
that there exist matrices $\bm T_{l}^{(i)} \in \mathcal{O}^{g_i}$ for each $i \in [p]$ and $\bm T_{l}^{(p+1)} \in \mathcal{O}^{d_{l-1} - r_{\sigma}}$ such that \eqref{eq:Hlapproximate} holds. Combining this with \Cref{app:b4} in Appendix~\ref{app:sec auxi}, we have that \eqref{eq:gradient_condition} and \eqref{eq:dist grad 1} (resp., \eqref{eq:dist 5}) hold when $L=2$ (resp., $L \ge 3$). Moreover, it follows from \Cref{prop:singular value 2} and \Cref{prop:singular control} that \eqref{eq:singular 2}, \eqref{eq:singular vectors}, \eqref{eq:singular 1} hold, where $\tilde{\bm U}_L$, $\tilde{\bm V}_1$, and $\hat{\bm T}$ are respectively defined in \eqref{eq:UL}, \eqref{eq:V1}, and \eqref{eq:T}. 
Recall that $\bm U_l \in \mathcal{O}^{d_l}$ and $\bm V_l \in \mathcal{O}^{d_{l-1}}$ for all $l \in [L]$ are introduced in \eqref{eq:SVD Wl}. For ease of exposition, we define $\bm \Sigma_l^* := \blk(\mathrm{diag}(\bm \sigma^*), \bm 0) \in \R^{d_l\times d_{l-1}}$ and 
\begin{align}\label{eq0:thm eb}
\hat{\bm W}_1 := \bm U_1\bm \Sigma_1^* \tilde{\bm V}_1^T,\qquad \hat{\bm W}_l := \bm U_l\bm \Sigma_l^*\bm V_l^T,\ l =2,\dots,L-1,\qquad \hat{\bm W}_L := \tilde{\bm U}_L\bm \Sigma_L^*\bm V_L^T. 
\end{align}
Now, we compute
\begin{align}\label{eq2:thm eb}
\|\hat{\bm W}_1 - \bm W_1\|_F & = \left\| \bm U_1\bm \Sigma_1^* \tilde{\bm V}_1^T -  \bm U_1\bm \Sigma_1 \bm V_1^T \right\|_F\notag \\
&\le  \left\| \bm U_1\bm \Sigma_1^* \tilde{\bm V}_1^T -  \bm U_1\bm \Sigma_1^* \bm V_1^T \right\|_F +  \left\|   \bm U_1\bm \Sigma_1^* \bm V_1^T -  \bm U_1\bm \Sigma_1 \bm V_1^T  \right\|_F \notag\\
&\le \sigma_{\max}^* \|\tilde{\bm V}_1 - \bm V_1\|_F + \|  \bm \Sigma_1^* - \bm \Sigma_1\|_F\notag \\
&\le \left( c_4\sigma_{\max}^* + c_3\sqrt{d_{\max} - r_{\sigma}} + c_5\sqrt{r_{\sigma}} \right) \|\nabla G(\bm W)\|_F, 
\end{align}
where the last inequality uses  \eqref{eq:singular vectors},  \eqref{eq:singular 2}, and \eqref{eq:singular 1}. Using the same argument, we obtain 
\begin{align}\label{eq3:thm eb}
\|\hat{\bm W}_L - \bm W_L\|_F \le \left( c_4\sigma_{\max}^* + c_3\sqrt{d_{\max} - r_{\sigma}} + c_5\sqrt{r_{\sigma}} \right) \|\nabla G(\bm W)\|_F. 
\end{align} 
For all $l=2,\dots,L-1$, we compute
\begin{align*}
\|\hat{\bm W}_l - \bm W_l\|_F &  =  \left\|   \bm \Sigma_l^* - \bm \Sigma_l \right\|_F \overset{(\ref{eq:singular 2},\ref{eq:singular 1})}{\le} \left( c_3\sqrt{d_{\max} - r_{\sigma}} + c_5\sqrt{r_{\sigma}} \right) \|\nabla G(\bm W)\|_F.
\end{align*}
This, together with \eqref{eq2:thm eb} and \eqref{eq3:thm eb}, yields
\begin{align}\label{eq4:thm eb}
\|\bm W - \hat{\bm W}\|_F \le \sqrt{L} \left( c_4\sigma_{\max}^* + c_3\sqrt{d_{\max} - r_{\sigma}} + c_5\sqrt{r_{\sigma}} \right) \|\nabla G(\bm W)\|_F. 
\end{align}
Next, we further define 
\begin{align}
& \bm T_l := \mathrm{BlkD}(\bm T_{l}^{(1)},\dots,\bm T_{l}^{(p)},\bm T_{l}^{(p+1)}),\ l=2,\dots,L, \label{eq6:thm eb}\ \bm Q_L := \bm V_L,\\
& \bm W_1^* := \bm Q_2 \bm \Sigma_1^* \hat{\bm T}^T \tilde{\bm V}_1^T,\ \bm W_l^* := \bm Q_{l+1} \bm \Sigma_l^*\bm Q_l^T,\ l =2,\dots,L-1,\ \bm W_L^*: = \tilde{\bm U}_L\bm \Sigma_L^*\bm Q_L^T, \label{eq7:thm eb}\\
&\textstyle\bm Q_l  := \bm V_l\mathrm{BlkD}\left(\left(\prod_{k=l+1}^L\bm T_{k}^{(1)}\right),\dots,\left(\prod_{k=l+1}^L\bm T_{k}^{(p)}\right) , \bm I_{d_{l-1}-r_{\sigma}}\right),\ l = 2,\dots,L-1.\label{eq:defQl}
\end{align}
Using \Cref{prop:opti G}, one can verify that $\bm W^* = (\bm W_1^*,\bm W_2^*,\dots,\bm W_L^*) \in \mathcal{W}_{\bm \sigma^*}.$
For each $l=2,\dots,L-2$, we compute
\begin{align}\label{eq8:thm eb}
\|\hat{\bm W}_l - \bm W_l^*\|_F & = \left\| \bm U_l\bm \Sigma_l^*\bm V_l^T - \bm Q_{l+1} \bm \Sigma_l^*\bm Q_l^T \right\|_F\notag \\
&\overset{\eqref{eq:partsigma}}{=} \left\| \bm V_{l+1}^T \bm U_l\bm \Sigma_l^* - \blk\left( \sigma_{t_1}^*(\bm T_{l+1}^{(1)})^T,\dots,\sigma_{t_p}^*(\bm T_{l+1}^{(p)})^T, \bm 0 \right) \right\|_F \notag\\
& = \left\| (\bm V_{l+1}^T \bm U_l - \bm T_{l+1}^T) \bm \Sigma_l^* \right\|_F \overset{\eqref{eq:Hlapproximate}}{\le} \frac{9\sigma_{\max}^{*2}}{ 4\delta_{\sigma}\lambda\sigma^{*}_{\min}} \|\nabla G(\bm W)\|_F,
\end{align}
where 
the last equality uses the diagonal block forms of $\bm T_{l}$ and $\bm \Sigma_l^*$. Using \eqref{eq0:thm eb} and \eqref{eq7:thm eb}, we compute 
\begin{align*}
	\|\hat{\bm W}_1 - \bm W_1^*\|_F 
	&= \|\bm U_1 \bm \Sigma_1^* \tilde{\bm V}_1^T - \bm Q_2 \bm \Sigma_1^*\hat{\bm T}^T \tilde{\bm V}_1^T\|_F 
	= \|\bm V_2^T \bm U_1 \bm \Sigma_1^* - \bm V_2^T \bm Q_2 \bm \Sigma_1^*\hat{\bm T}^T  \|_F \notag \\
	&\leq \|\bm V_2^T\bm U_1 \bm \Sigma_1^* - \bm T_2^T \bm \Sigma_1^*\|_F 
	+ \|\bm T_2^T \bm \Sigma_1^* - \bm V_2^T \bm Q_2 \bm \Sigma_1^* \hat{\bm T}^T \|_F \notag \\
	&= \|(\bm U_1^T \bm V_2 - \bm T_2)^T \bm \Sigma_1^*\|_F 
	+ \|\bm \Sigma_1^* - \bm T_2 \bm V_2^T \bm Q_2 \bm \Sigma_1^* \hat{\bm T}^T \|_F \notag \\
    & = \|(\bm U_1^T \bm V_2 - \bm T_2)^T \bm \Sigma_1^*\|_F \overset{\eqref{eq:Hlapproximate}}{\le} \frac{9\sigma_{\max}^{*2}}{ 4\delta_{\sigma}\lambda\sigma^{*}_{\min}}\|\nabla G(\bm W)\|_F,
\end{align*} 
where the last equality follows from the froms of $\bm T_l$ in \eqref{eq6:thm eb}, 
$\bm Q_2$ in \eqref{eq:defQl}, and $\hat{\bm T}$ in \eqref{eq:T}. This, together with \eqref{eq8:thm eb} and the fact that $\bm W_L^* = \hat{\bm W}_L$ due to $\bm Q_L = \bm V_L$, yields 
\begin{align}\label{eq:hatstardiff} \|\hat{\bm W} - \bm W^*\|_F \le \frac{9\sigma_{\max}^{*2}\sqrt{L-1}}{ 4\delta_{\sigma}\lambda\sigma^{*}_{\min}}\|\nabla G(\bm W)\|_F.\end{align}
Since $\bmw^*\in \mathcal{W}_{\bm\sigma^*}$, we have 
$\mathrm{dist}(\bm W, \mathcal{W}_{\bm \sigma^*})  \le \|\bm W - \bm W^*\|_F \le \|\bm W - \hat{\bm W}\|_F + \|\hat{\bm W} - \bm W^*\|_F.$ 
This, together with \eqref{eq4:thm eb} and \eqref{eq:hatstardiff}, implies \eqref{eq:eb G}. 
\end{proof}


Using \Cref{thm:eb G} and the connection of Problems \eqref{eq:F} and \eqref{eq:G}, we are now ready to prove \Cref{thm:eb}. 
\begin{proof}
\noindent{\bf Proof of \Cref{thm:eb}.} 
According to \Cref{thm:eb G}, we conclude that for any $\bm \sigma^* \in \mathcal{A}_{\rm sort} \setminus \{\bm 0\}$, 
\eqref{eq:eb G} holds for $\mathcal{W}_{\bm \sigma^*}$, where the constants $\epsilon_{\bm \sigma^*}$ and $\kappa_{\bm \sigma^*}$ (see \eqref{def:epsilon_kappa_2} and \eqref{def:kappa_2}) depend on $\bm \sigma^*$. Recalling the definition of $\mathcal{A}$ in \eqref{set:A}, and noting that a polynomial has only finitely many roots, it follows that $\mathcal{A}_{\rm sort}$ defined in \eqref{set:W sigma} is finite. Consequently, by \eqref{set:W sigma}, we can derive a corresponding result for $\mathcal{W}_G$ that does not depend on $\bm \sigma^*$. In addition, we take the analysis about $\bm \sigma^*=\bm 0$ in \Cref{prop:eb zero} into consideration. Specifically, by letting
\begin{align}
    \label{eq:def_kappa_epsilon}
    \kappa := \max_{\bm \sigma^*\in \mathcal{A}_{\rm sort}} \kappa_{\bm \sigma^*}\quad \text{and}\quad \epsilon:= \min_{\bm\sigma*\in \mathcal{A}_{\rm sort}} \epsilon_{\bm \sigma^*},
\end{align}
we have 
$
    \mathrm{dist}(\bm W,\mathcal{W}_G)\le \kappa\|\nabla G(\bm W)\|_F
$ for all $\mathrm{dist}(\bmw , \mathcal{W}_G)\le \epsilon.$  Using this and \Cref{lem:equi FG}(ii), \eqref{eq:eb F} holds with 
\begin{align}
\label{eq:def_epsilon_kappa_1}
    \kappa_1 =  {\kappa\lambda}/{\lambda_{\min}}\quad \text{and}\quad\epsilon_1 =  {\epsilon}/{\sqrt{\lambda_{\max}}}.
\end{align}  
\end{proof}

\section{Experimental Results}\label{sec:expe}

In this section, we conduct experiments under different settings to validate our theoretical results. Specifically, we employ GD to train deep networks using the \texttt{PyTorch} library. 
We terminate the algorithm when the squared gradient norm satisfies $\|\nabla F(\bm{W}^k)\|_F^2 \leq 10^{-6}$ and the function value change satisfies $|F(\bm{W}^k) - F(\bm{W}^{k-1})| \leq 10^{-7}$ for all $k=1,2,\dots$, where $\bm W^k$ is the $k$-th iterate.

\subsection{Linear Convergence to Critical Points}\label{subsec:exp 1}

In this subsection, we investigate the convergence behavior of GD to different critical points of Problem \eqref{eq:F}. In our experiments, we set $d_L=20$, $d_0=10$, and $d_l=32$ for each $l=2,\dots,L-1$. Then, we i.i.d. sample each entry of $\bm Y \in \R^{d_L\times d_0}$ from the standard Gaussian distribution, i.e., $y_{ij} \overset{i.i.d.}{\sim} \mathcal{N}(0,1)$. Moreover, we set the regularization parameters $\lambda_l = 10^{-4}$ for all $l \in [L]$ and the learning rate $4.5 \times 10^{-4}$. To ensure convergence to different critical points, we initialize the weight matrices in the neighborhood of two different critical points of Problem \eqref{eq:F}. Specifically, we apply the SVD to $\bm Y$ and solve the equation \eqref{eq:sigma W} to get its roots. Using these results and \Cref{thm:opti}, we can respectively construct an optimal solution, denoted by $\bm W_{\rm opt}^*$, and a non-optimal critical point, denoted by $\bm W_{\rm crit}^*$, of Problem \eqref{eq:F}.  
Then, we set $\bm W^0 = \bm W^* + 0.01 \bm \Delta$, where $\bm W^*$ is $\bm W_{\rm opt}^*$ or $\bm W_{\rm crit}^*$ and each entry of $\bm \Delta$ is i.i.d. sampled from the standard Gaussian distribution. For each initialization, we run GD for solving Problem \eqref{eq:F} with different depths $L\in \{2,4,6\}$.

 
\begin{figure}[t]
    \centering
    \begin{minipage}{0.45\textwidth}
        \centering
        \includegraphics[width=0.7\textwidth]{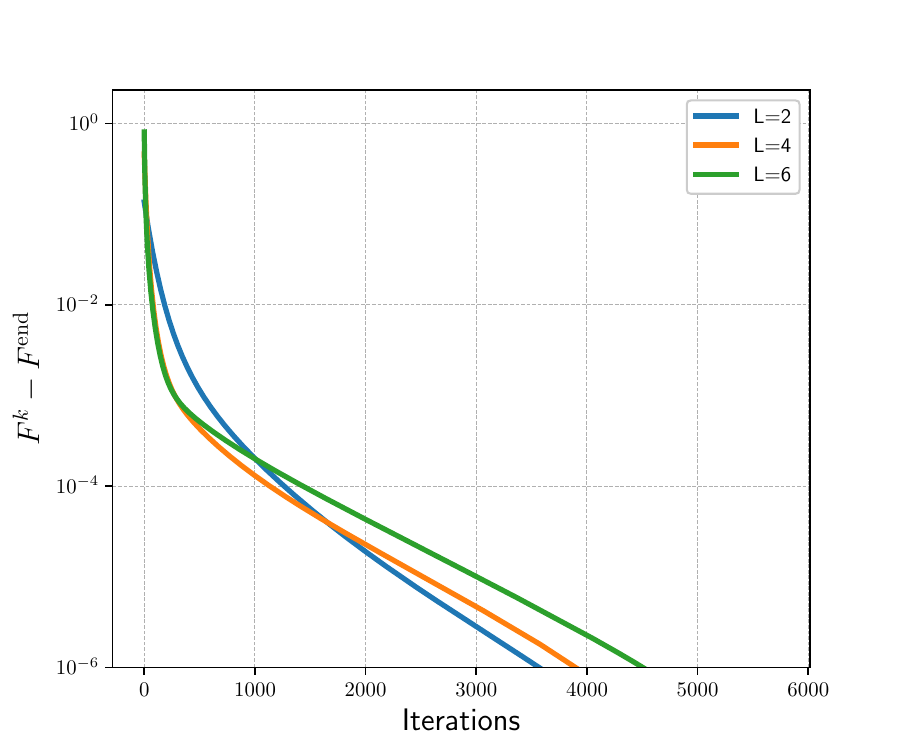} 
        \subfigure{\small (a) Training loss }
\end{minipage} 
    \begin{minipage}{0.45\textwidth}
        \centering
        \includegraphics[width=0.7\textwidth]{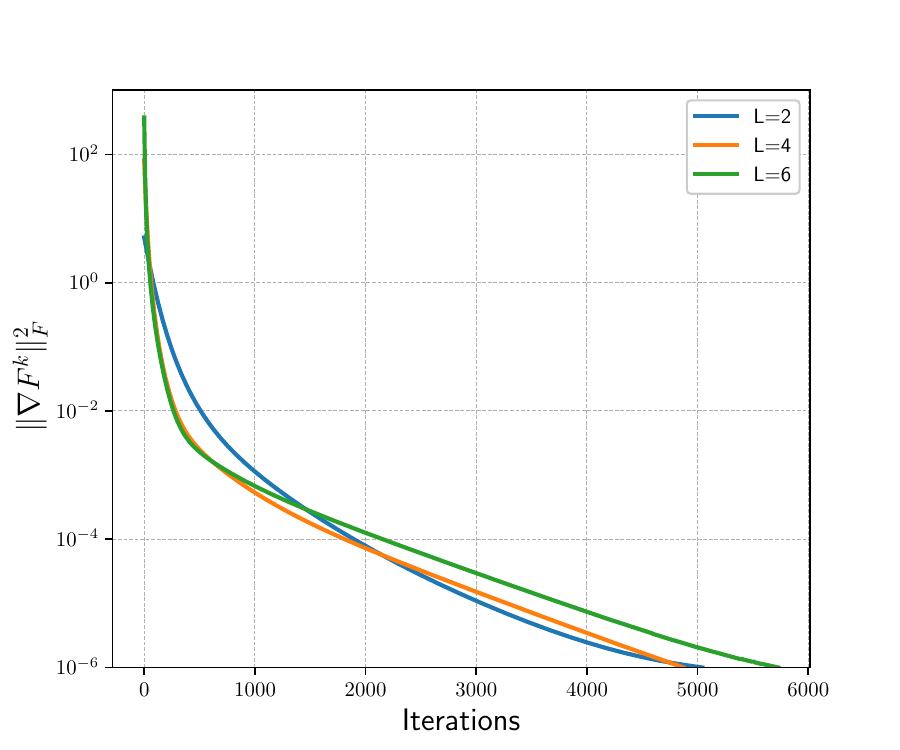} 
        \subfigure{\small (b) Squared gradient norm }
    \end{minipage}\\
   
    \begin{minipage}{0.45\textwidth}
        \centering
        \includegraphics[width=0.7\textwidth]{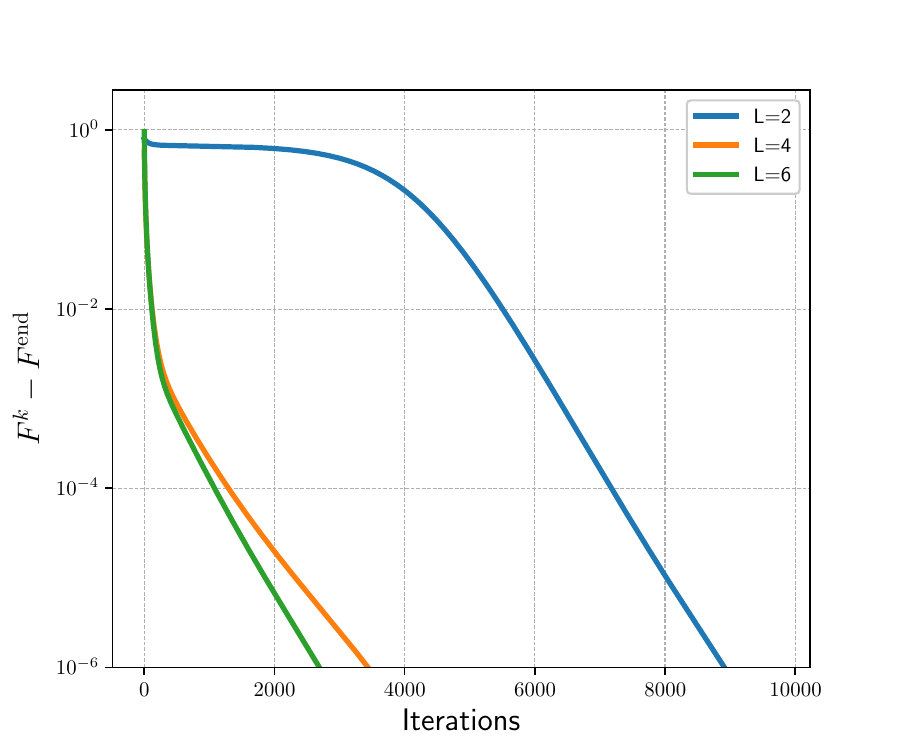} 
        \subfigure{\small (c) Training loss }
    \end{minipage}    
    \begin{minipage}{0.45\textwidth}
        \centering
        \includegraphics[width=0.7\textwidth]{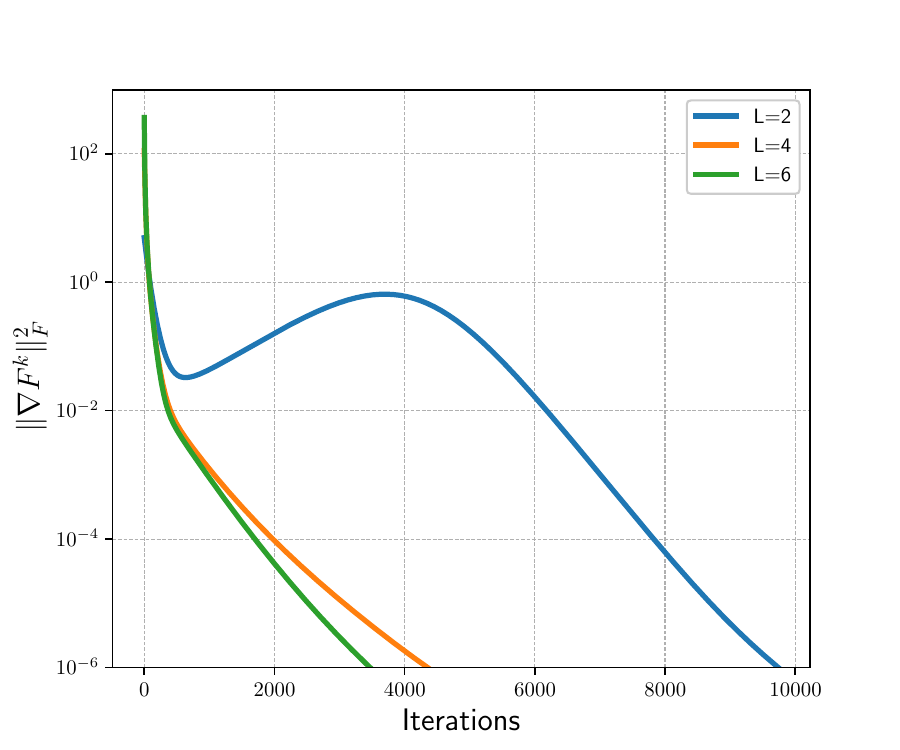} 
        \subfigure{\small (d) Squared gradient norm }
    \end{minipage} 
    \caption{Linear convergence of GD to a critical point.
     In (a) and (b), $\{\bm W^k\}$ converges to an optimal solution when initialized near an optimal solution; in (c) and (d), \(\{\bm W^k\}\) converges to a non-optimal critical point for \(L = 4, 6\), and to an optimal solution for \(L = 2\) when initialized near a non-optimal critical point.
Here $F^{\rm end}$ denotes the training loss when the stopping criterion is met.
    }
    \label{fig:convergence_rates}
\end{figure}

\begin{table}[!htbp]
\centering
\begin{tabular}{c|c|c|c|c}
\hline
 Initialization& Objective values  & $L = 2$ & $L = 4$ & $L = 6$ \\ \hline
\multirow{2}{*}{\shortstack{Initialization near\\ global optimal solution}} 
  & $F(\bm W_{\rm opt}^*)$ & $9.2900\times 10^{-3}$ & $8.1723 \times 10^{-3}$ & $9.5264\times 10^{-3}$ \\
  & $F(\bm W^{\rm end})$ & $9.2985 \times 10^{-3}$ & $8.2020 \times 10^{-3}$ & $9.5741 \times 10^{-3}$ \\ \hline
\multirow{2}{*}{\shortstack{Initialization near\\ non-optimal critical point}} 
  & $F(\bm W^*_{\rm cri})$ & $6.8038 \times 10^{-1}$ & $6.7906 \times 10^{-1}$ & $6.8022 \times 10^{-1}$ \\ 
  & $F(\bm W^{\rm end})$ & $9.2980 \times 10^{-3}$ & $6.7909 \times 10^{-1}$ & $6.8027 \times 10^{-1}$ \\ \hline
\end{tabular}
\caption{\centering The function values at the final iterate $\bm W^{\rm end}$ and the critical point $\bm W^*$.}\vspace{-0.15in}
\label{tab:loss_table}  
\end{table}

We respectively plot the function value gap $F(\bm W^k) - F(\bm W^{\rm end})$ and the squared gradient norm $\|\nabla F(\bm W^k)\|_F^2$ against the iteration number in \Cref{fig:convergence_rates}(a) and (b) (resp., \Cref{fig:convergence_rates}(c) and (d)), where $ \bm W^{\rm end}$ denotes the last iterate of the GD. 
We also report the function values of the final iterate, in different settings in \Cref{tab:loss_table}. As observed from \Cref{fig:convergence_rates} and \Cref{tab:loss_table}, GD converges to an optimal solution at a linear rate for solving Problem \eqref{eq:F} with different network depths, and similarly, it converges to a non-optimal critical point at a linear rate for $L=4,6$. We should point out that Problem \eqref{eq:F} only has global optima and strict saddle points when $L=2$, as shown in \cite{chen2025complete}. Therefore, even if initialized in the vicinity of a non-optimal critical point, i.e., strict saddle point, the algorithm will ultimately converge to a global optimal point almost surely \cite{lee2019first}. This aligns with our linear convergence analysis for all critical points of Problem \eqref{eq:F} in \Cref{prop:threecondition} in Appendix~\ref{app:sec C}, which leverages the error bound of Problem \eqref{eq:F}, and thereby supports \Cref{thm:eb}.

\subsection{Convergence Behavior in General Settings}

In this subsection, we investigate the convergence behavior of GD in more general setups extending beyond linear networks. Specifically, the network depth is fixed as $L=4$ and network widths at different layers are set as follows: $d_L=16$, $d_0=10$, and $d_l=32$ for each $l=2,\dots,L-1$. The regularization parameter is set as $\lambda_l = 5\times10^{-5}$ for all $l \in [L]$ and the learning rate of GD is set as $10^{-3}$. We use the default initialization scheme in \texttt{PyTorch} to initialize the weights for GD. The data matrix $\bm{X}$ is generated according to a uniform distribution using the \texttt{xavier\_uniform\_} function in \texttt{PyTorch}, while the target matrix $\bm Y$ is generated using the same approach as described in \Cref{subsec:exp 1}.  Below, we outline the different setups used in our experiments. 

{\bf General data input.} 
We first consider general data inputs $\bm X_1,\bm X_2,\bm X_3$ instead of orthogonal inputs $\bm X$. Then, we apply GD to optimize Problem \eqref{eq:F} with a linear network $\bm W_L\ldots\bm W_1\bm X$ for each of these different data matrices. 

 {\bf Linear networks with bias.} We next study the regularized loss of deep linear networks with bias terms: 
\[ 
\min\limits_{\{\bm W_l,\bm b_l\}}\; \sum_{i=1}^N\left\| \bm{W}_L(\bm W_{L-1} \cdots (\bm {W}_1 \bm x_i+ \bm{b}_1)+\bm b_{L-1}) + \bm b_L - \bm y_i \right\|^2 + \sum_{l=1}^{L} \lambda_l \left(\left\| \bm{W}_l \right\|_F^2 + \left\| \bm{b}_l \right\|^2 \right),
\]
 where $\bm x_i$ and $\bm y_i$ respectively denote the $i$-th column of $\bm X$ and $\bm Y$. We apply GD to solve the above problem when the input is either an identity matrix or general input data.

 {\bf Deep nonlinear networks.} 
Finally, we study the performance of GD for solving the regularized loss of deep nonlinear networks with different activation functions:
\begin{align*}
   \min\limits_{\{\bm W_l,\bm b_l\}}\ \sum_{i=1}^N \left\|\bm W_L \sigma \left( \bm W_{L-1}\cdots\sigma\left(\bm W_1 \bm x_i + \bm b_1 \right) + \bm b_{L-1} \right) + \bm b_L - \bm y_i \right\|_F^2 + \sum_{l=1}^{L}  \lambda_l \left(\left\| \bm{W}_l \right\|_F^2 +  \left\| \bm{b}_l \right\|_F^2 \right), 
\end{align*}
where $\sigma(\cdot)$ denotes an activation function. We set $\bm{X}$ as the identity matrix and use GD for solving the above problem when the activation functions are chosen as ReLU, Leaky ReLU, and tanh, respectively.

\begin{figure}[t]
    \centering
\begin{minipage}{0.33\textwidth}
\centering  \includegraphics[width=\textwidth]{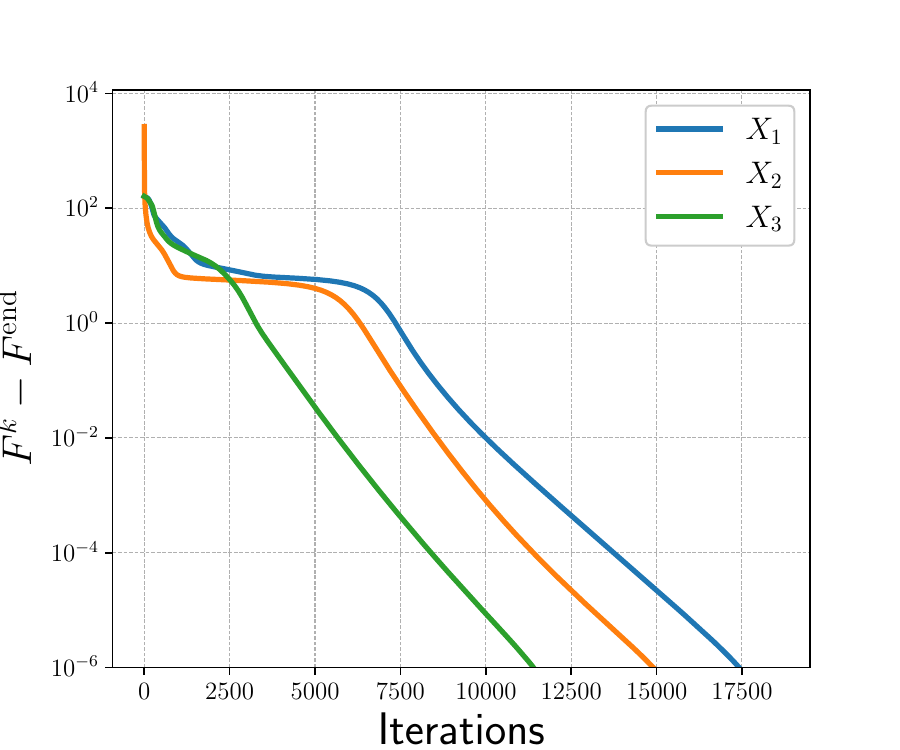}\vspace{-0.05in}
\subfigure{(a) General data input}
\end{minipage}%
\begin{minipage}{0.33\textwidth}
\centering
\includegraphics[width=\textwidth]{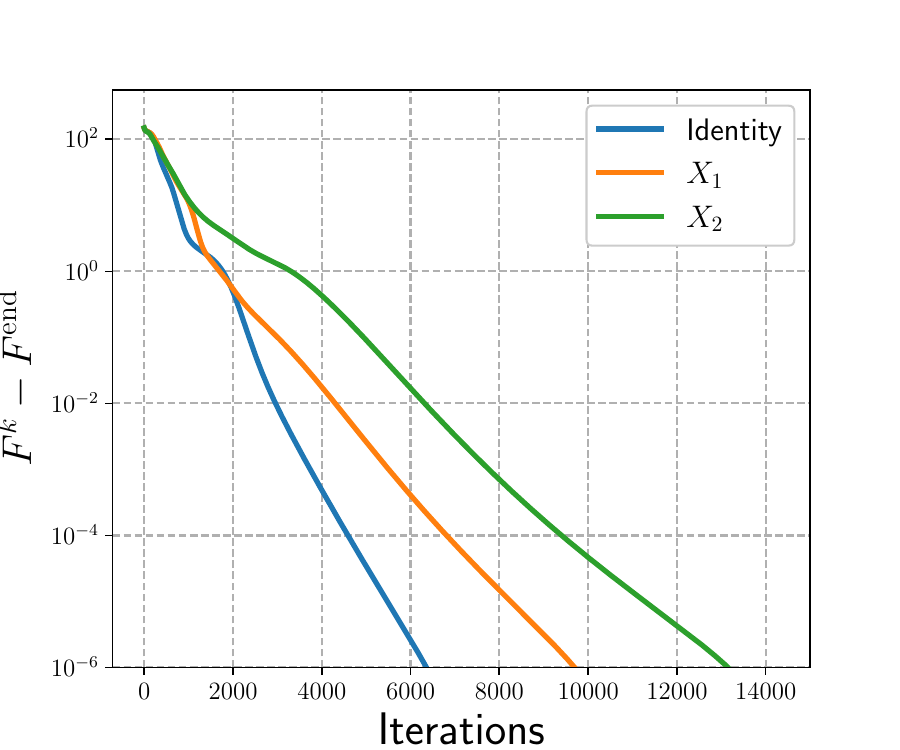}\vspace{-0.05in}
\subfigure{(b) Linear networks with bias}
    \end{minipage}%
\begin{minipage}{0.33\textwidth}
\centering
\includegraphics[width=\textwidth]{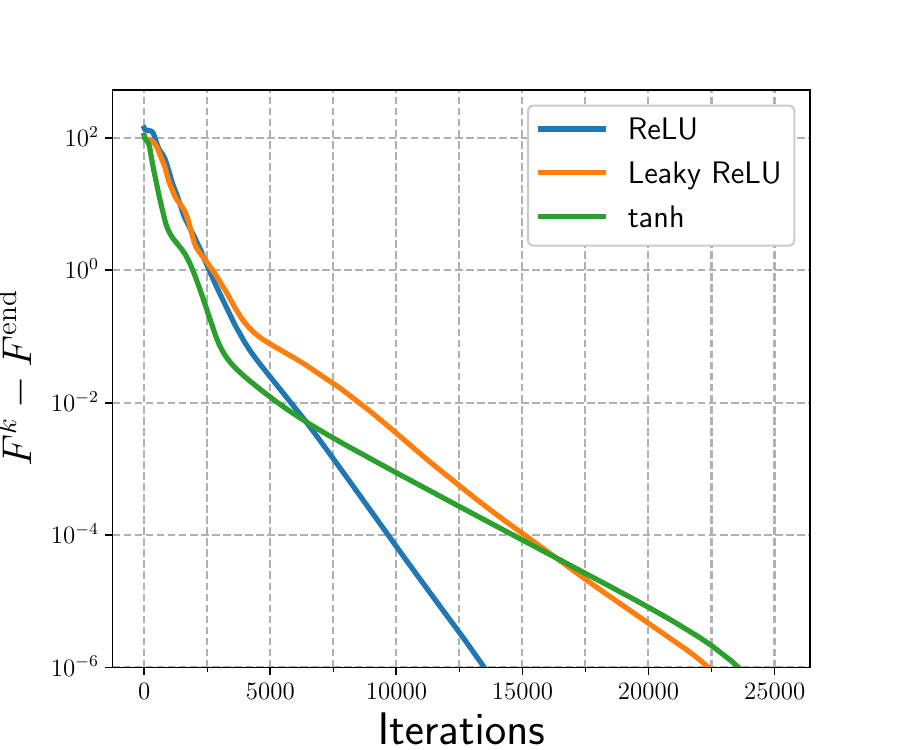}\vspace{-0.05in}
\subfigure{(c) Deep nonlinear networks}
\end{minipage} \vspace{-0.05in} 
\caption{Linear convergence of GD under different settings: 
(a) Three different matrices $\bm X_1$, $\bm X_2$, and $\bm X_3$, with condition numbers of 43.43, 36.04, and 16.36, respectively. (b) Identity matrix, $\bm X_1$, and $\bm X_2$, with condition numbers of 1, 4.82, and 14.62, respectively. (c) Identity matrix as data input with ReLU, Leaky ReLU, and tanh as activation functions. $F^{\rm end}$ denotes the training loss at the final step when the stop condition is triggered.}\vspace{-0.1in}
\label{fig:extent experiments}
\end{figure}

For the above three different settings, we plot the function value gap $F(\bm W^k) - F(\bm W^\text{end})$ against the iteration number in \Cref{fig:extent experiments}. It is observed from \Cref{fig:extent experiments} that GD converges to a solution at a linear rate across these settings. This consistent behavior leads us to conjecture that the error-bound condition may hold for deep networks in more general scenarios. Additionally, we observe that the number of iterations required to meet the stopping criterion generally increases as the condition number of the input data becomes larger. Exploring this phenomenon presents an interesting direction for future research. 

\section{Conclusions}\label{sec:con}

In this paper, we studied the regularized squared loss of deep linear networks and proved its error bound, a regularity condition that characterizes local geometry around the critical point set. This result is not only theoretically significant but also lays down the foundation for establishing strong convergence guarantees for various methods for solving Problem \eqref{eq:F}. To establish the error bound, we explicitly characterized the critical point set of \eqref{eq:F} and developed new analytic techniques to show the error bound, which may be of independent interest. Our numerical results across different settings provide strong support for our theoretical findings. One future direction is to extend our analysis to deep linear networks with more general data input $\bm X$ and loss functions. Another interesting direction is to investigate the regularized loss of deep nonlinear networks. 



\bibliographystyle{abbrvnat}
\bibliography{reference,NC}

\appendix
\setcounter{section}{0}
\setcounter{lemma}{0}
\setcounter{defi}{0}
\setcounter{page}{1}
\pagenumbering{roman} 
\renewcommand\thesection{\Alph{section}}
\renewcommand{\thelemma}{\Alph{section}.\arabic{lemma}}
\renewcommand{\thedefi}{\Alph{section}.\arabic{definition}} 

\clearpage 
\begin{appendix}
\begin{center}
{\Large \bf Auxiliary Proofs and Results}
\end{center} 


\section{Showing the Necessity of Assumption~\ref{AS:2}}\label{app sec:counter} 

We claim that the error bound of $F$ (see Problem \eqref{eq:F}) holds if and only if the error bound of $G$ (see Problem \eqref{eq:G}) holds. Indeed, it follows from (iii) in \Cref{lem:equi FG} that the ``if'' direction holds. Now, it remains to show the ``only if'' direction. Suppose that the error bound  of Problem \eqref{eq:F} holds (see \eqref{eq:eb F}). 
Using the proof setup in \Cref{lem:equi FG}, we have
\[ 
    \frac{ \lambda  }{ \lambda_{\max} } \, \mathrm{dist}(\hat{\bm{W}}, \mathcal{W}_G) \overset{\eqref{eq:distconnect}}{\leq} \frac{ \lambda }{ \sqrt{ \lambda_{\max} } } \, \mathrm{dist}(\bm{W}, \mathcal{W}_F) \overset{\eqref{eq:eb F}}{\leq} \frac{ \lambda \kappa_1 }{ \sqrt{ \lambda_{\max} } } \, \| \nabla F(\bm{W}) \|_F \overset{\eqref{eq:gradientnorm}}{\leq} \kappa_1 \, \| \nabla G(\bm{Z}) \|_F.
\]
Therefore, we prove the claim. Based on the error-bound equivalence between $F$ and $G$, establishing the necessity of Assumption~\ref{AS:2} for $F$ is equivalent to doing so for $G$.

\subsection{The Case $L=2$}

For ease of exposition, we denote $\widehat{\mathcal{W}}_{\bm \sigma^*} := \mathcal{W}_{\rm sort(\bm \sigma^*)}$ for each $\bm \sigma^*\in \mathcal{A}$, where $\mathrm{sort}(\cdot)$ is a sorting function that arranges the elements of a vector in decreasing order. 

\begin{lemma}\label{lem:counter 1}
Suppose that $L=2$ and \eqref{eq:AS L=2} does not hold. For any $\bm \sigma^* \in \cal A$, the error bound for the critical point set $\widehat{\mathcal{W}}_{\bm \sigma^*}$ of Problem \eqref{eq:G} fails to hold. 
\end{lemma}
\begin{proof}
Since \eqref{eq:AS L=2} does not hold, there exists $i \in [r_Y]$ such that $y_i = \sqrt{\lambda}$. Now, we define $\bm W(t) := \left(\bm W_1(t),\bm W_2(t) \right)$ as follows: 
\begin{align*}
\textstyle
\begin{cases}
        &\bm W_1(t) = \bm Q_2\bm \Sigma_1(t) \mathrm{BlkD}\left( \bm O_1,\dots,\bm O_{p_{Y}},\bm O_{p_{Y}+1} \right),\\
        &\bm W_2(t) =  \mathrm{BlkD}\left( \bm O_1^T,\dots,\bm O_{p_{Y}}^T,\widehat{\bm O}_{p_{Y}+1}^T \right)\bm \Sigma_2(t) \bm Q_2^T, \\
        &\bm \Sigma_l(t) = \mathrm{BlkD}\left(\mathrm{diag}(\bm \sigma^* +t \bm e_i), \bm 0\right) \in \R^{d_l\times d_{l-1}}, \forall l = 1,2,\\
        &\bm O_i \in \mathcal{O}^{h_i}, \forall i \in [p_{Y}], \bm O_{p_{Y}+1} \in \mathcal{O}^{d_0 - r_{Y}}, \widehat{\bm O}_{p_{Y}+1} \in \mathcal{O}^{d_2 - r_{Y}}. 
\end{cases}
\end{align*}
According to \Cref{prop:opti G}, one can verify that $\bm W(0) \in \widehat{\mathcal{W}}_{\bm \sigma^*}$. Using $y_i = \sqrt{\lambda}$, we obtain  
$$\sigma^*_i \in \{x: x^3 -\sqrt{\lambda}y_i x+\lambda x=0,\ x\ge 0\} = \{0\}.$$
Then, we compute 
\begin{align}\label{eq1:lem counter 1}
 \left\|\nabla_{l} G(\bmw(t))\right\|_F \overset{\eqref{eq:grad G}}{=} 2|t^3- \sqrt{\lambda}y_i t+\lambda t| = 2|t^3|,\  l=1,2,
\end{align}
where the second equality follows from $y_i = \sqrt{\lambda}$. 
Moreover, we have
\begin{align} \mathrm{dist}^2(\bmw(t),\widehat{\mathcal{W}}_{\bm \sigma^*})
    & =\min_{(\bm W_1,\bm W_2) \in \widehat{\mathcal{W}}_{\bm \sigma^*}} \|\bmw_1(t)-\bmw_1 \|_F^2+\|\bmw_L(t)-\bmw_2 \|_F^2 \notag\\
    &   \ge \|\bm \Sigma_1(t) - \bm \Sigma_1(0)\|_F^2 +  \|\bm \Sigma_2(t) - \bm \Sigma_2(0)\|_F^2  = 2t^2, \label{eq:final count1} 
\end{align}
where the first inequality uses Mirsky's inequality (see \Cref{lem:mirsky}). 
When $t \le  {\delta_\sigma}/{3}$, we have 
$ 
\mathrm{dist}(\bm W(t), \widehat{\mathcal{W}}_{\bm \sigma^*})\le \|\bm W(t) - \bm W(0)\|_F  \le  {\sqrt{2}\delta_\sigma}/{3}.
$ 
Using this  and \Cref{prop:set W}(ii), we have for each $  \bar{\bm \sigma}^* \in \mathcal{A}$ satisfying $ \widehat{\mathcal{W}}_{\bm \sigma^*}\neq \widehat{\mathcal{W}}_{\bar{\bm \sigma}^*}$, 
\[
   \textstyle   \mathrm{dist}(\bm W(t), \widehat{\mathcal{W}}_{\bar{\bm \sigma}^*}) \ge \mathrm{dist}( \widehat{\mathcal{W}}_{\bm \sigma^*},\widehat{\mathcal{W}}_{\bar{\bm \sigma}^*}) - \mathrm{dist}(\bm W(t), \widehat{\mathcal{W}}_{\bm \sigma^*}) 
    \ge \left(1-\frac{\sqrt{2}}{3}\right)\delta_\sigma > \mathrm{dist}(\bm W(t), \widehat{\mathcal{W}}_{\bm \sigma^*}).
\]
Consequently, we obtain $\mathrm{dist}(\bmw(t),\mathcal{W}_{G}) = \mathrm{dist}(\bmw(t),\widehat{\mathcal{W}}_{\bm \sigma^*})$.
This, together with \eqref{eq1:lem counter 1} and \eqref{eq:final count1}, implies
$ 
\|\nabla G(\bm W(t))\|_F =	 2\sqrt{2}|t|^{3} \le  2^{3/4} \mathrm{dist}^{\frac{3}{2}}(\bmw(t),\widehat{\mathcal{W}}_{\bm \sigma^*})=  2^{3/4} \mathrm{dist}^{\frac{3}{2}}(\bmw(t),\mathcal{W}_{G}). 
$ 
Obviously, the error bound does not hold when $t \to 0$.  
\end{proof}

\subsection{The Case $L \ge 3$}
\label{subsec:A2}
We first present the following lemma to derive the equivalent condition to \eqref{eq:AS y}.

\begin{lemma}\label{lem:phi}
When $L \ge 3$, \eqref{eq:AS y} holds if and only if   $\varphi'(\sigma^*) \neq 0$ for all $ \sigma^* \in \mathcal{Y}\setminus\{0\}$, where  $\cal Y$ and $\varphi(\cdot)$ are defined in \eqref{set:Y} and  \eqref{eq:phi}, respectively.
When $L=2$, we have $\varphi'(\sigma^*) \neq 0$.
\end{lemma} 
\begin{proof}
First, consider the case where $L\ge 3$.
Suppose that there exists $ \sigma^* \in \mathcal{Y}\setminus\{0\}$ such that $\varphi'(\sigma^*) = 0$. Note that $\varphi'(\sigma^*) = 0$ is equivalent to
\(\varphi'(\sigma^*)= L\sigma^{*(L-1)}/\sqrt{\lambda} + \sqrt{\lambda}(2-L)\sigma^{*(1-L)} = 0.\)
Using $\sigma^* \in \mathcal{Y}$, there exists $j \in [d_{\min}]$ such that $  \sigma^{*(2L-1)} + \lambda \sigma^* = \sqrt{\lambda} \sigma^{*(L-1)}y_j$. 
    Combining the above two equations, we obtain 
    \begin{align}\label{eq:L3}
       y_j= \left( \left( \frac{L-2}{L} \right)^{\frac{L}{2(L-1)}} + \left( \frac{L}{L-2} \right)^{\frac{L-2}{2(L-1)}} \right)  \lambda^{\frac{1}{2(L-1)}}.
    \end{align}
    This implies that \eqref{eq:AS y} does not hold. 
    
    Conversely, suppose that \eqref{eq:AS y} does not hold. This implies that there exists $j \in [d_{\min}]$ such that \eqref{eq:L3} holds. Then, one can verify that $x^{2L-1}-\sqrt{\lambda}y_j x^{L-1}+\lambda x =0$
    has a positive root $x^* = \left({\lambda(L-1)}/{L}\right)^{\frac{1}{2(L-1)}}$ and $\varphi'(x^*) = 0$.
    Using the definition of $\mathcal{Y}$, we have $x^* \in \mathcal{Y}$. 

    When $ L = 2 $, we compute
    $\varphi'(\sigma^*) = 2 \sigma^*/\sqrt{\lambda} > 0$ for all $\sigma^* \in \mathcal{Y}\setminus\{ 0\}.$ So, $\varphi'(\sigma^*) \neq 0$ holds trivially.
\end{proof}

\begin{lemma}\label{lem:counter 2}
Suppose that $L \ge 3$ and \eqref{eq:AS y} does not hold. There exists $\bm \sigma^* \in \cal A$ such that the error bound for the critical point set $\widehat{\mathcal{W}}_{\bm \sigma^*}$ of Problem \eqref{eq:G} fails to hold. 
\end{lemma}
\begin{proof}

According to \Cref{lem:phi} and the fact that \eqref{eq:AS y} does not hold, there exists $\bm \sigma^* \in \cal A$ such that $\varphi^\prime(\sigma_i^*) = 0$ for some $i \in [d_{\min}]$. This implies $ f(\sigma^*_i) := (\sigma^*_i)^{2L-1} - \sqrt{\lambda} y_i (\sigma^*_i)^{L-1} + \lambda \sigma^*_i = 0$ and $\varphi(\sigma^*_i) = y_i$. Note that $f(x) = \sqrt{\lambda} x^{L-1} \varphi(x) - \sqrt{\lambda} y_i x^{L-1}$ and we compute $f^\prime(\sigma_i^*) = \sqrt{\lambda} (\sigma_i^*)^{L-1} \varphi'(\sigma_i^*) = 0$. 
    Now, we define $\bmw(t)=(\bmw_1(t),\cdots,\bmw_L(t))$  as follow:
    \begin{align*}
    \begin{cases}
        &\bm W_1(t) = \bm Q_2\bm \Sigma_1(t) \mathrm{BlkD}\left( \bm O_1,\dots,\bm O_{p_{Y}},\bm O_{p_{Y}+1} \right),\\ 
        & \bm W_l(t) = \bm Q_{l+1} \bm \Sigma_l(t) \bm Q_l^T,\ l=2,\dots,L-1,  \\
        &\bm W_L(t) =  \mathrm{BlkD}\left( \bm O_1^T,\dots,\bm O_{p_{Y}}^T,\widehat{\bm O}_{p_{Y}+1}^T \right)\bm \Sigma_L(t) \bm Q_L^T,\\
        &\bm \Sigma_l(t) = \mathrm{BlkD}\left(\mathrm{diag}(\bm \sigma^* +t \bm e_i), \bm 0\right) \in \R^{d_l\times d_{l-1}},\ \forall l \in [L], \\
        &\bm O_i \in \mathcal{O}^{h_i}, \forall i \in [p_{Y}],\ \bm O_{p_{Y}+1} \in \mathcal{O}^{d_0 - r_{Y}}, \widehat{\bm O}_{p_{Y}+1} \in \mathcal{O}^{d_L - r_{Y}},\ \bm Q_l \in \mathcal{O}^{d_{l-1}},\ l=2,\dots,L. 
    \end{cases}
    \end{align*}
It follows from \Cref{prop:opti G} that $\bmw(0) \in \mathcal{W}_G$. 
Therefore, we obtain for all $l \in [L]$,  
\[
 \|\nabla_{l} G(\bmw(t))\|_F \overset{\eqref{eq:grad G}}{=} 2\left|(\sigma^*_i+t)^{2L-1}- \sqrt{\lambda}y_i(\sigma^*_i+t)^{L-1}+\lambda (\sigma^*_i+t)\right|= 2\left|f(\sigma^*_i+t)\right|.
\]
Applying the Taylor expansion to $f(\sigma^*_i+t)$ at $\sigma^*$, together with $f(\sigma^*_i)=0$ and $ f^\prime(\sigma^*_i)=0$, yields that when $t \to 0$, 
$
    \|\nabla G(\bmw(t))\|_F = O(t^2).
$
We also note that
\(
    \mathrm{dist}^2(\bmw(t),\widehat{\mathcal{W}}_{\bm \sigma^*}) =  \|\bmw_1(t)-\bmw_1^*\|_F^2+\cdots+\|\bmw_L(t)-\bmw_L^*\|_F^2 \ge Lt^2,
\)
where the inequality follows from Weyl's inequality. Using the same argument in \Cref{lem:counter 1}, we conclude that $\mathrm{dist}(\bmw(t),\widehat{\mathcal{W}}_{\bm \sigma^*}) = \mathrm{dist}(\bmw(t),\mathcal{W}_{G})$ when $t$ is sufficiently small. Then we have $\left\|\nabla G\left(\bm W(t)\right)\right\|_F =O\left(\mathrm{dist}^2(\bmw(t),\mathcal{W}_{G })\right)$, which implies that the error bound fails to hold.
\end{proof}

\begin{remark}
    Under Assumption~\ref{AS:1}, it follows from \Cref{thm:eb} that Assumption~\ref{AS:2} is a sufficient condition for the error bound to hold for the critical point set $\mathcal{W}_G$. According to \Cref{lem:counter 1} and \Cref{lem:counter 2}, we conclude that Assumption~\ref{AS:2} is also a necessary condition. Therefore, we establish the necessary and sufficient condition unde which the error bound holds. 
\end{remark}

\section{Supplementary Results and Proofs for \Cref{sec:proof}}\label{app:sec A}  

\subsection{Proof of \Cref{prop:set W}}\label{subsec app:set W} 
\begin{proof}
(i) Obviously, the ``only if'' direction is trivial since $\bm \sigma \in \mathcal{W}_{\bm \sigma, \bm \Pi}$ and $\bm \sigma^\prime \in \mathcal{W}_{\bm \sigma^\prime, \bm \Pi^\prime}$ share the same non-increasing singular values. It remains to prove the ``if'' direction. Suppose that $(\bm \sigma,\bm \Pi)\in \mathcal{B}$ and  $(\bm \sigma^\prime,\bm \Pi^\prime) \in \mathcal{B}$ satisfy $\bm \sigma = \bm \sigma^\prime$. There exist $\bm a, \bm a^\prime \in \mathcal{A}$ such that
    \begin{align}\label{eq:defahata}
        \bm a = \bm \Pi^T \bm \sigma,\ \bm a^\prime = \bm \Pi^{\prime T} \bm \sigma^\prime.
    \end{align}
    This, together with $\bm \sigma = \bm \sigma^\prime$, implies that $\bm a, \bm a^\prime$ have the same positive elements but in a different order. Consider the following equation from $\mathcal{A}$: 
        $x_i^{2L-1} - \sqrt{\lambda}y_i x_i^{L-1} + \lambda x_i = 0,\ x_i \geq 0,\ \forall i \in [d_{\min}].$
    Note that if $x_i = x_j>0$, we have $y_i=y_j$, which implies that if $y_i \neq y_j$, we have $x_i \neq x_j$ when $x_i, x_j>0$. 
    Since $\bm \sigma = \bm \sigma'$ and each element of $\bm a$ and $\bm a'$ is obtained by solving the above equation, along with the fact that different $y_i$'s correspond to equations with no common positive root and the partition of $(y_1, \ldots, y_{d_{\min}})$ in \eqref{eq:SY1}, yields that the elements of $\bm a$ and $\bm a'$ in each partition of the form $\bm Y$ differ only in their order. Therefore,there exist $\bm P_i \in \mathcal{P}^{h_i}$ for all $i \in [p_Y]$ and $\bm P_{p_Y+1} \in \mathcal{P}^{d_{\min} -r_{Y}}$ such that $\mathrm{diag}(\bm a) = \blk(\bm P_1,\ldots,\bm P_{p_Y},\bm P_{p_Y+1})\mathrm{diag}(\bm a^\prime)\blk(\bm P_1^T,\ldots,\bm P_{p_Y}^T,\bm P_{p_Y+1}^T). $ 
    Substituting \eqref{eq:defahata} into the above equality, together with $\bm \sigma = \bm \sigma^\prime$, yields 
        \begin{align}\label{eq1:prop set W}
        \bm \Pi^T\mathrm{diag}(\bm \sigma)\bm \Pi 
         =   \blk(\bm P_1,\ldots,\bm P_{p_Y},\bm P_{p_Y+1})\bm \Pi^{\prime T}\mathrm{diag}(\bm \sigma )\bm \Pi^\prime\blk(\bm P_1^T,\ldots,\bm P_{p_Y}	^T,\bm P_{p_Y+1}^T).
    \end{align}
    Let $(\bm W_1,\cdots,\bm W_L)\in \mathcal{W}_{\bm \sigma,\bm \Pi}$ be arbitrary. For ease of exposition, let 
    {
    \begin{align}
    \label{eq:defql}
        \tilde{\bm Q}_l :=\bm Q_l\blk(\bm \Pi,\bm I)
         \blk\left(\bm P_1,\ldots,\bm P_{p_Y},\bm P_{p_Y+1},\bm I \right)\blk(\bm \Pi^{'T},\bm I) \in \mathcal{O}^{d_{l-1}},\ \forall l \in \{2,\dots,L\}. 
    \end{align}
    }
    Moreover, we have $\bm \Sigma_l \blk(\bm \Pi, \bm I) = \blk(\mathrm{diag}(\bm \sigma)\bm \Pi, \bm 0),\ \forall l \in [L]. $
    Then, we have 
    \begin{align*}
    \textstyle
         \bm W_1 &= \bm Q_2\bm \Sigma_1 \mathrm{BlkD}\left(\bm \Pi, \bm I  \right)\mathrm{BlkD}\left( \bm O_1,\dots,\bm O_{p_Y},\bm O_{p_Y+1} \right)\\
        &= \tilde{\bm Q}_2 \bm \Sigma_1 \mathrm{BlkD}\left( \bm \Pi^\prime, \bm I  \right)\blk(\bm P_1^T\bm O_1,\ldots,\bm P_{p_Y}^T\bm O_{p_{Y}},\blk(\bm P_{p_Y+1}^T,\bm I_{d_0-d_{\min}})\bm O_{p_{Y}+1}),  
    \end{align*}
    where the second equality uses \eqref{eq1:prop set W} and \eqref{eq:defql}. Using the same argument, we have
    \begin{align*}
    \textstyle
    & \bm W_L = \mathrm{BlkD}\left( \bm O_1^T\bm P_1,\dots,\bm O_{p_{Y}}^T\bm P_Y,\widehat{\bm O}_{p_{Y}+1}^T\blk(\bm P_{p_Y+1}, \bm I) \right)\mathrm{BlkD}\left(\bm \Pi^{\prime T}, \bm I \right)\bm \Sigma_L \tilde{\bm Q}_{L}^T,\\
    &\bm W_l = \tilde{\bm Q}_{l+1} \bm \Sigma_l \tilde{\bm Q}_l^T,\ l =2,\dots, L-1. 
    \end{align*}
Therefore, we obtain that $(\bm W_1,\ldots,\bm W_L)\in \mathcal{W}_{\bm \sigma^\prime, \bm \Pi^\prime}$ and thus $\mathcal{W}_{\bm\sigma,\bm\Pi} \subseteq  \mathcal{W}_{\bm\sigma^\prime, \bm\Pi^\prime} $. Applying the same argument, we also have $\mathcal{W}_{\bm\sigma^\prime, \bm\Pi^\prime} \subseteq \mathcal{W}_{\bm\sigma,\bm\Pi}$. Therefore, we have $\mathcal{W}_{\bm\sigma,\bm\Pi} = \mathcal{W}_{\bm\sigma^\prime, \bm\Pi^\prime}$.\\
(ii)  Using Mirsky's inequality (see \Cref{lem:mirsky}),  for any $\bm W \in \mathcal{W}_{\bm \sigma, \bm \Pi}$ and $\bm W^\prime \in \mathcal{W}_{\bm \sigma^\prime, \bm \Pi^\prime}$, we have
$ \|\bm W- \bm W^\prime\|_F \geq \|\bm \sigma -\bm \sigma^\prime\|_2 \overset{\eqref{set:Y}}{\geq} \delta_{\sigma}.$
    
\end{proof}

\subsection{Auxiliary Lemma for Proving \Cref{lem:twoinequality}}\label{app:lem commute}

\begin{lemma}\label{lem:commute}
    Let \(a > 0\) be a constant, \(\bm \Sigma \in \R^{n\times n} \) be a diagonal matrix, and \(\bm Q \in \mathcal{O}^n\) be an orthogonal matrix. Then, if \(\|\bm \Sigma -  a \bm I\| \leq a/2\), we have
    $
    \|\bm Q \bm \Sigma^2 - \bm \Sigma^2\bm Q\|_F \geq a \|\bm Q \bm \Sigma - \bm \Sigma\bm Q\|_F.
    $
\end{lemma}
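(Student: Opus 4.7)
The plan is to exploit the diagonal structure of $\bm\Sigma$ by computing the commutators entrywise. Writing $\bm\Sigma = \mathrm{diag}(\sigma_1,\dots,\sigma_n)$, the $(i,j)$ entry of $\bm Q\bm\Sigma - \bm\Sigma\bm Q$ is $Q_{ij}\sigma_j - \sigma_i Q_{ij} = Q_{ij}(\sigma_j - \sigma_i)$, and the $(i,j)$ entry of $\bm Q\bm\Sigma^2 - \bm\Sigma^2\bm Q$ factors analogously as $Q_{ij}(\sigma_j^2 - \sigma_i^2) = Q_{ij}(\sigma_j - \sigma_i)(\sigma_j + \sigma_i)$.

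Next, I would translate the spectral bound $\|\bm\Sigma - a\bm I\| \le a/2$ into the pointwise bound $a/2 \le \sigma_i \le 3a/2$ for every $i$, which immediately gives $\sigma_i + \sigma_j \ge a$ for all $i,j$.

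The conclusion then follows by an entrywise comparison of squared Frobenius norms:
\begin{align*}
\|\bm Q\bm\Sigma^2 - \bm\Sigma^2\bm Q\|_F^2
&= \sum_{i,j} Q_{ij}^2 (\sigma_j - \sigma_i)^2 (\sigma_j + \sigma_i)^2 \\
&\ge a^2 \sum_{i,j} Q_{ij}^2 (\sigma_j - \sigma_i)^2
= a^2 \|\bm Q\bm\Sigma - \bm\Sigma\bm Q\|_F^2,
\end{align*}
and taking square roots yields the claim. There is no serious obstacle here; the only subtlety is the observation that the diagonality of $\bm\Sigma$ causes both commutators to be Hadamard-type products against $\bm Q$ with the same sign pattern $(\sigma_j - \sigma_i)$, which is exactly what makes the pointwise comparison valid. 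Notice also that the hypothesis $\bm Q \in \mathcal{O}^n$ is not used beyond having a well-defined matrix $\bm Q$; the argument goes through for an arbitrary matrix of compatible size.
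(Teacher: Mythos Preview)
Your proof is correct and takes a genuinely different route from the paper. You compute both commutators entrywise, exploiting that with $\bm\Sigma$ diagonal one has $(\bm Q\bm\Sigma^k - \bm\Sigma^k\bm Q)_{ij} = Q_{ij}(\sigma_j^k - \sigma_i^k)$, and then compare squared Frobenius norms term by term using $\sigma_i + \sigma_j \ge a$. The paper instead writes $\bm\Delta := \bm\Sigma - a\bm I$, expands $\bm Q\bm\Sigma^2 - \bm\Sigma^2\bm Q = \bm Q\bm\Delta^2 - \bm\Delta^2\bm Q + 2a(\bm Q\bm\Delta - \bm\Delta\bm Q)$, and applies the triangle inequality together with the bound $\|\bm Q\bm\Delta^2 - \bm\Delta^2\bm Q\|_F \le 2\|\bm\Delta\|\,\|\bm Q\bm\Delta - \bm\Delta\bm Q\|_F$ to obtain $2(a - \|\bm\Delta\|)\|\bm Q\bm\Sigma - \bm\Sigma\bm Q\|_F$. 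Your argument is more elementary and loses nothing at the term level; the paper's argument, on the other hand, never uses that $\bm\Sigma$ is diagonal and therefore yields the same inequality for an arbitrary matrix $\bm\Sigma$ satisfying $\|\bm\Sigma - a\bm I\| \le a/2$. Your closing remark that orthogonality of $\bm Q$ is unnecessary is correct for both proofs.
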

\begin{proof}
For ease of exposition, let $\bm \Delta := \bm \Sigma - a\bm I$. We compute
\begin{align*}
\textstyle
\|\bm Q \bm \Sigma^2 - \bm \Sigma^2\bm Q\|_F & = \|\bm Q \left(\bm \Delta + a \bm I \right)^2 - \left(\bm \Delta + a \bm I \right)^2\bm Q\|_F \\
&= \left\| \bm Q \bm \Delta^2 - \bm \Delta^2 \bm Q + 2a\left(\bm Q\bm \Delta - \bm \Delta\bm Q \right) \right\|_F \\
& \ge 2a \left\|  \bm Q\bm \Delta - \bm \Delta\bm Q  \right\|_F - \left\| \bm Q \bm \Delta^2 - \bm \Delta^2 \bm Q \right\|_F \\
&\ge 2\left( a - \|\bm \Delta\|\right) \left\|  \bm Q\bm \Sigma - \bm \Sigma \bm Q  \right\|_F  \ge a\left\|  \bm Q\bm \Sigma - \bm \Sigma \bm Q  \right\|_F,
\end{align*}
where the second inequality follows from $\bm Q\bm \Delta - \bm \Delta\bm Q = \bm Q\bm \Sigma - \bm \Sigma \bm Q$ and $\| \bm Q \bm \Delta^2 - \bm \Delta^2 \bm Q \|_F = \|\bm \Delta(\bm \Delta\bm Q - \bm Q\bm \Delta) + (\bm \Delta\bm Q - \bm Q\bm \Delta)\bm \Delta \|_F \le 2\|\bm \Delta\| \|\bm Q\bm \Sigma - \bm \Sigma \bm Q\|_F$, and the last inequality uses $\|\bm \Delta\| \le a/2$.   
\end{proof}

\subsection{Auxiliary Lemma and Proof for Proving \Cref{prop:singular control}}\label{app:lem deri}

\begin{lemma}\label{lem:derivative bounds}
Let $\bm{\sigma}^* \in \mathcal{A}_{\mathrm{sort}} \setminus \{\bm{0}\}$ and $i \in [r_\sigma]$ be arbitrary. Suppose that  
$
|x - \sigma_i^*| \le \min\left\{\delta_1,\delta_2,\frac{\delta_\sigma}{3}\right\}, 
$
where $\delta_1$ and $\delta_2$ are defined in \eqref{eq:delta}. Then we have
\begin{align}
\left| \varphi(x) - \varphi(\sigma_i^*) \right| \le \frac{\delta_y}{3} \text{ and }\ |\varphi'(x)-\varphi'(\sigma^*_i)|\le \frac{|\varphi'(\sigma_i^*)|}{2}.
\label{eq:result_phi}
\end{align}
\end{lemma}
\begin{proof}
According to the definition of $\delta_\sigma$ in \eqref{set:Y}, if $|x - \sigma_i^*|\le \frac{\delta_\sigma}{3}$, then we have $   \frac{2\sigma_{\min}^*}{3}\le |x|\le \frac{4\sigma_{\max}^*}{3}.$
Consequently, we obtain that
\begin{align*}
    |\varphi'(x)|
    &=\Bigl|\frac{L}{\sqrt{\lambda}}x^{L-1}+\sqrt{\lambda}(2-L)x^{1-L}\Bigr| \le \frac{L}{\sqrt{\lambda}}\left(\frac{4\sigma_{\max}^*}{3}\right)^{L-1}
      +(L-2)\sqrt{\lambda}\left(\frac{2\sigma_{\min}^*}{3}\right)^{1-L},\\[0.8ex]
    |\varphi''(x)|
    &=\Bigl|\frac{L(L-1)}{\sqrt{\lambda}}x^{L-1}+\sqrt{\lambda}(2-L)(1-L)x^{-L}\Bigr| \\&\le \frac{L(L-1)}{\sqrt{\lambda}}\left(\frac{4\sigma_{\max}^*}{3}\right)^{L-1}
      +\sqrt{\lambda}(2-L)(1-L)\left(\frac{2\sigma_{\min}^*}{3}\right)^{-L}.
\end{align*}
Finally, by the mean value theorem, for all $x$ satisfying 
$|x - \sigma_i^*| \le \min\{\delta_1,\delta_2,\frac{\delta_\sigma}{3}\}$,
we obtain \eqref{eq:result_phi}. 
\end{proof}

\subsubsection{Proof of \eqref{eqi11:prop sing} and \eqref{eqi12:prop sing}}
\label{sec:comlementary1}
\begin{proof}
Using the form of $\bm \Sigma_l$ in \eqref{eq:SVD Wl} and \eqref{eq:prop singular} for $L=2$ (resp., \eqref{eq:singular 2} for $L \ge 3$), we have for each $l \in [L]$, 
\begin{align}
& \left\|\hat{\bm \Sigma}_l - \bm \Sigma_l\right\|_F\! = \!\!\left\| \bm \Sigma_{l}^{(p+1)} \right\|_F\!\! \le \!c_3\sqrt{\min\{d_l,d_{l-1}\}} \|\nabla G(\bm W)\|_F \le c_3\sqrt{d_{\max}}\|\nabla G(\bm W)\|_F,\label{eq1:prop sing} \\
& \left\|(\hat{\bm \Sigma}_l\hat{\bm \Sigma}_l^T)^{L-1}\hat{\bm \Sigma}_l - (\bm \Sigma_l\bm \Sigma_l^T)^{L-1}\bm \Sigma_l \right\|_F  = \left\|\left(\bm \Sigma_{l}^{(p+1)}\bm \Sigma_{l}^{(p+1)^T}\right)^{L-1}\bm \Sigma_{l}^{(p+1)} \right\|_F \notag \\
&\qquad\qquad\qquad\qquad\qquad\qquad\ \ \overset{(\ref{eq:sigmalowerbound}, \ref{eq1:prop sing})}{\le}  c_3\sqrt{d_{\max}}\left( \frac{3\sigma_{\max}^*}{2} \right)^{2(L-1)}\|\nabla G(\bm W)\|_F. \label{eq2:prop sing}
\end{align}
We first bound 
\begin{align}
& \left\|(\hat{\bm \Sigma}_L\hat{\bm \Sigma}_L^T)^{L-1}\hat{\bm \Sigma}_L + \lambda \hat{\bm \Sigma}_L - \sqrt{\lambda}\bm \Psi \mathrm{BlkD}\left(\bm B_1, \dots, \bm B_{p}, \bm 0\right)  \right\|_F \notag\\
\le &  \left\|(\bm \Sigma_L\bm \Sigma_L^T)^{L-1}\bm \Sigma_L + \lambda \bm \Sigma_L - \sqrt{\lambda}\bm \Psi \mathrm{BlkD}\left(\bm B_1, \dots, \bm B_{p}, \bm B_{p+1}\right) \right\|_F + \lambda\|\hat{\bm \Sigma}_L - \bm \Sigma_L\|_F  \notag\\
\ & + \left\|(\hat{\bm \Sigma}_L\hat{\bm \Sigma}_L^T)^{L-1}\hat{\bm \Sigma}_L - (\bm \Sigma_L\bm \Sigma_L^T)^{L-1}\bm \Sigma_L \right\|_F + \sqrt{\lambda}\|\bm \Psi\|\|\bm B_{p+1}\|_F \notag\\
\le& \left( c_2 + \left(\left( \frac{3\sigma_{\max}^*}{2} \right)^{2(L-1)} + \lambda \right) c_3\sqrt{d_{\max}} \right) \left\| \nabla G(\bm W) \right\|_F + \sqrt{\lambda}y_1\|\bm B_{p+1}\|_F  \\
\le&  \eta_3 \left\| \nabla G(\bm W) \right\|_F,  \label{eq3:prop sing}
\end{align}
where the second inequality follows from \eqref{eq:B}, \eqref{eq:twoinequality1}, 
\eqref{eq1:prop sing}, and \eqref{eq2:prop sing}, and the last  uses \eqref{eq:sigmalowerbound} and \eqref{eq:B}.
Using \eqref{eq:B} and \eqref{eq0:prop sing}, we have
\begin{align*}
& \sum_{i=1}^{r_{\sigma}} \left\| \left( \sigma_i^{2L-1}(\bm W_L) + \lambda \sigma_i(\bm W_L)\right)\bm e_i - \sqrt{\lambda}\sigma_i^{L-1}(\bm W_L) \bm \Psi \hat{\bm T}\bm e_i \right\|^2 \notag\\
\le & \left\|(\hat{\bm \Sigma}_L\hat{\bm \Sigma}_L^T)^{L-1}\hat{\bm \Sigma}_L + \lambda \hat{\bm \Sigma}_L - \sqrt{\lambda}\bm \Psi \hat{\bm T}\mathrm{BlkD}\left((\bm \Sigma_L^{(1)})^{L-1}, \dots, (\bm \Sigma_L^{(p)})^{L-1}, \bm 0\right)  \right\|_F^2\notag \\
= &\left\|(\hat{\bm \Sigma}_L\hat{\bm \Sigma}_L^T)^{L-1}\hat{\bm \Sigma}_L + \lambda \hat{\bm \Sigma}_L - \sqrt{\lambda}\bm \Psi \mathrm{BlkD}\left(\bm B_1, \dots, \bm B_{p}, \bm 0\right)  \right\|_F^2 
\end{align*}
Dividing the above inequality by $\sqrt{\lambda}\,\sigma_{r_{\sigma}}^{L-1}(\bm W_L)$ and using \eqref{eq3:prop sing} and the auxiliary function $\varphi$ in \eqref{eq:phi} yields
\begin{align}\label{eq5:prop sing}
\sum_{i=1}^{r_{\sigma}} \left\| \varphi(\sigma_i(\bm W_L))\bm e_i - \bm \Psi \hat{\bm T} \bm e_i\right\|^2 &  \le   \frac{\eta_3^2 }{\lambda\sigma_{r_{\sigma}}^{2(L-1)}(\bm W_L)}\|\nabla G(\bm W)\|_F^2  \overset{\eqref{eq:sigmalowerbound}}{\le}  \frac{4^{L-1}\eta_3^2}{\lambda(\sigma_{\min}^{*})^ {2L-2}} \|\nabla G(\bm W)\|_F^2,
\end{align}
which is just the desired inequality \eqref{eqi11:prop sing}. By applying the same argument used to derive \eqref{eq3:prop sing} using \eqref{eq:twoinequality2}, we have 
\begin{align}\label{eq4:prop sing}
\left\| ( \hat{\bm \Sigma}_1 \hat{\bm \Sigma}_1^T)^{L-1} \hat{\bm \Sigma}_1+\lambda \hat{\bm \Sigma}_1-\sqrt{\lambda}\mathrm{BlkD}\left(\bm A_1,\dots,\bm A_{p},\bm 0\right)\bm \Psi \right\|_F  \le \eta_3 \left\| \nabla G(\bm W) \right\|_F. 
\end{align}
Now, we claim
\begin{align}
    \left\| \left( \hat{\bm \Sigma}_1 \hat{\bm \Sigma}_1^T \right)^{L-1} \hat{\bm \Sigma}_1 
        + \lambda \hat{\bm \Sigma}_1 
        - \sqrt{\lambda}\, \mathrm{BlkD}\left( (\bm \Sigma_{1}^{(1)})^{L-1}, \dots, (\bm \Sigma_{1}^{(p)})^{L-1}, \bm 0 \right) \bm \Psi \hat{\bm T} \right\|_F \le \eta_4 \left\| \nabla G(\bm W) \right\|_F. 
    \label{eq0:prop_sing}
\end{align}
Applying a similar argument as in \eqref{eq5:prop sing} to the rows of the above inequality yields \eqref{eqi12:prop sing}.

In the rest of this proof, we show that \eqref{eq0:prop_sing} holds.
For each $ i \in [p] $, we define
$ \hat{\bm T}_i := \prod_{l=2}^L \bm{T}_l^{(i)} $. For each $i \in [p]$, we have  
\begin{align}
& \textstyle \quad \left\| \hat{\bm T}_i (\bm \Sigma_{1}^{(i)})^{2L-1} 
- (\bm \Sigma_{1}^{(i)})^{2L-1} \hat{\bm T}_i \right\|_F  \notag\\
&= \left\|\left(\prod_{l=2}^L\bm T_{l}^{(i)}\right)(\bm \Sigma_{1}^{(i)})^{2L-1} 
- (\bm \Sigma_{1}^{(i)})^{2L-1}\prod_{l=2}^L\bm T_{l}^{(i)} \right\|_F \notag\\
 &\textstyle \le \left\|\left(\prod_{l=2}^{L-1}\bm T_{l}^{(i)}\right) \left(\bm T_{L}^{(i)}(\bm\Sigma_{1}^{(i)})^{2L-1}-(\bm \Sigma_{1}^{(i)})^{2L-1}\bm T_{L}^{(i)}\right)\right\|_F  \\
 &\quad\textstyle +\left\|\left(\prod_{l=2}^{L-2}\bm T_{l}^{(i)}\right) \left(\bm T_{L-1}^{(i)}(\bm\Sigma_{1}^{(i)})^{2L-1}-(\bm \Sigma_{1}^{(i)})^{2L-1}\bm T_{L-1}^{(i)} \right) \bm T_{L}^{(i)}\right\|_F  + \cdots\notag \\
 & \textstyle \quad+\left\| \left(\bm T_{2}^{(i)}(\bm\Sigma_{1}^{(i)})^{2L-1}-(\bm \Sigma_{1}^{(i)})^{2L-1}\bm T_{2}^{(i)}\right)\left(\prod_{l=3}^{L}\bm T_{l}^{(i)}\right)\right\|_F \notag \\
 &\textstyle \le \sum_{l=2}^L\sum_{j=1}^{2L-1} \left\|(\bm\Sigma_{1}^{(i)})^{j-1}\left(\bm T_{l}^{(i)}\bm\Sigma_{1}^{(i)}-\bm \Sigma_{1}^{(i)}\bm T_{l}^{(i)}\right)(\bm\Sigma_{1}^{(i)})^{2L-j-1} \right\|_F \notag \\
 &\textstyle \le  \frac{ \eta_1 (2L - 1)L}{\sigma^*_{\min}} 
            \left( \frac{3 \sigma^*_{\max}}{2} \right)^{2L - 2}\|\nabla G(\bm W)\|_F,\label{eq:complementary1}
\end{align}
where the last inequality follows from \eqref{eq:sigmalowerbound} and \eqref{eq4:lem two}. Similarly, we have
\begin{equation}\textstyle
\left\| \hat{\bm T}_i \bm \Sigma_{1}^{(i)} 
- \bm \Sigma_{1}^{(i)}\hat{\bm T}_i \right\|_F \le \frac{ \eta_1 L}{\sigma^*_{\min}} 
            \left( \frac{3 \sigma^*_{\max}}{2} \right)^{2L - 2}\|\nabla G(\bm W)\|_F.
\label{eq:complementary2}
\end{equation} Then we have
\begin{align}
   & \textstyle\quad\left\| \left( \hat{\bm \Sigma}_1 \hat{\bm \Sigma}_1^T \right)^{L-1} \hat{\bm \Sigma}_1 
        + \lambda \hat{\bm \Sigma}_1 
        - \sqrt{\lambda}\, \mathrm{BlkD}\left( (\bm \Sigma_{1}^{(1)})^{L-1}, \dots, (\bm \Sigma_{1}^{(p)})^{L-1}, \bm 0 \right) \bm \Psi \hat{\bm T} \right\|_F \notag \\
    & \textstyle = \left\| 
        \mathrm{BlkD}\left( \hat{\bm T}_1, \dots, \hat{\bm T}_p, \bm I_{d_1 - r_{\sigma}} \right) 
        \left( \left( \hat{\bm \Sigma}_1 \hat{\bm \Sigma}_1^T \right)^{L-1} \hat{\bm \Sigma}_1 
            + \lambda \hat{\bm \Sigma}_1 \right) \hat{\bm T}^T  
        - \sqrt{\lambda}\, \mathrm{BlkD}\left( \bm A_1, \dots, \bm A_p, \bm 0 \right) \bm \Psi 
    \right\|_F \notag \\
    &\textstyle \leq \left\| 
        \left( \hat{\bm \Sigma}_1 \hat{\bm \Sigma}_1^T \right)^{L-1} \hat{\bm \Sigma}_1 
        + \lambda \hat{\bm \Sigma}_1 
        - \sqrt{\lambda}\, \mathrm{BlkD}\left( \bm A_1, \dots, \bm A_p, \bm 0 \right) \bm \Psi 
    \right\|_F \notag \\
    &\textstyle \quad + \sum_{i=1}^p \left\| \hat{\bm T}_i (\bm \Sigma_{1}^{(i)})^{2L-1} \hat{\bm T}_i^T 
        - (\bm \Sigma_{1}^{(i)})^{2L-1} \right\|_F 
     +\lambda \sum_{i=1}^p \left\| \hat{\bm T}_i \bm \Sigma_{1}^{(i)} \hat{\bm T}_i^T 
        - \bm \Sigma_{1}^{(i)} \right\|_F \notag \\
    &\textstyle \leq \eta_3 \left\| \nabla G(\bm W) \right\|_F 
        + \sum_{i=1}^p \left\| \hat{\bm T}_i (\bm \Sigma_{1}^{(i)})^{2L-1} 
            - (\bm \Sigma_{1}^{(i)})^{2L-1} \hat{\bm T}_i \right\|_F + \lambda\sum_{i=1}^p \left\| \hat{\bm T}_i \bm \Sigma_{1}^{(i)} 
        - \bm \Sigma_{1}^{(i)}\hat{\bm T}_i \right\|_F \notag \\
    &\textstyle \leq \left( \eta_3 
        + \frac{p \eta_1 (2L - 1)L}{\sigma^*_{\min}} 
            \left( \frac{3 \sigma^*_{\max}}{2} \right)^{2L - 2}  
        + \frac{\lambda p \eta_1 L}{\sigma^*_{\min}} 
    \right) \left\| \nabla G(\bm W) \right\|_F = \eta_4 \left\| \nabla G(\bm W) \right\|_F, 
    \notag
\end{align}
where the first equality follows from the definition of $\bm  A_i$ for each $i \in [p]$ in \eqref{eq:A}, the second inequality follows from \eqref{eq4:prop sing}, and the last inequality follows from \eqref{eq4:lem two}, \eqref{eq:complementary1}, and  \eqref{eq:complementary2}. 
\end{proof}

\subsection{Auxiliary Lemma for Proving \Cref{thm:eb G}}\label{app:b4}

\begin{lemma}\label{lem:llip}
Let $\bm\sigma^* \in \mathcal{A}_{\rm sort}\setminus\{\bm 0\}$ be arbitrary. 
Then there exists a constant $L_G>0$ such that 
\begin{align*}
    \|\nabla G(\bm W)\|_F 
    \;\le\; L_G \, \mathrm{dist}(\bm W, \mathcal{W}_{\bm\sigma^*}),
    \quad \text{for all } \mathrm{dist}(\bm W, \mathcal{W}_{\bm\sigma^*}) \le \sigma_{\max}^*,
\end{align*}
where 
\begin{align}\label{eq:defLG}
    L_G \;=\; \lambda L 
    \;+\; 2^{2L-1} L^2 (\sigma_{\max}^*)^{2L-2}
    \;+\; \sqrt{\lambda}\, y_1 \, 2^{L-2} L^2 (\sigma_{\max}^*)^{L-2}.
\end{align}
\end{lemma}

\begin{proof}
 Suppose $\|\bmw - \bmw^*\|_F = \mathrm{dist}(\bmw , \mathcal{W}_{\bm \sigma^*})$ with $\bm W^*\in \mathcal{W}_{\bm \sigma^*}$, for each $l \in [L]$, we have 
 {\small
\begin{align}
&\|\nabla_l G(\bm W)\|_F \notag\\
= &\|\nabla_l G(\bmw)- \nabla_l G(\bmw^*)\|_F \notag\\
\le& \lambda\|\bmw_l\! -\!\bmw^*_l\|_F\!+\!\|\bm W_{L:l+1}^T(\bm W_{L:1}- \sqrt{\lambda}\bm Y) \bm W_{l-1:1}^T\!\! -\!\!\bm W_{L:l+1}^{*T}(\bm W_{L:1}^*\!\!- \!\!\sqrt{\lambda}\bm Y) \bm W_{l-1:1}^{*T}\|_F\notag\\
\le& \lambda\|\bmw_l-\bmw^*_l\|_F +\|\bmw_{L:l+1}^T\bmw_{L:1}\bmw_{l-1:1}^T-\bmw_{L:l+1}^{*T}\bmw_{L:1}^*\bmw_{l-1:1}^{*T}\|_F\notag\\
&+\sqrt{\lambda}\|\bmw_{L:l+1}^T\bm Y\bmw_{l-1:1}-\bmw_{L:l+1}^{*T}\bm Y\bm W_{l-1:1}^{*T}\|_F.\label{eq:nable_upbound}
\end{align}
}
By \Cref{prop:opti G}, we have 
$\|\bmw_{l}^*\|\le {\sigma}_{\max}^*
\ \text{and}\ \|\bmw_l\|\le  \|\bmw_l^*\|+\|\bmw -\bmw^*\|_F\le 2{\sigma}_{\max}^*,\ \forall l \in [L].
$
Using the triangle inequality and the matrix norm inequality $\|\bm A\bm B\|_F\le \|\bm A\|\|\bm B\|_F$, we have
\begin{align}
& \left\|\bmw_{L:l+1}^T \bmw_{L:1} \bmw_{l-1:1}^T - \bmw_{L:l+1}^{*T} \bmw_{L:1}^* \bmw_{l-1:1}^{*T} \right\|_F \notag \\
\le & \left\| (\bmw_{l+1} - \bmw_{l+1}^*)^T \bmw_{L:l+2}^T \bmw_{L:1} \bmw_{l-1:1}^T \right\|_F \notag \\
& + \left\| \bmw_{l+1}^{*T} (\bmw_{l+2} - \bmw_{l+2}^*)^T \bmw_{L:l+3}^T \bmw_{l-1:1}^T \right\|_F + \cdots \notag \\
& + \left\| \bmw_{L:l+1}^{*T} \bmw_{L:1}^* \bmw_{l-2:1}^{*T} (\bmw_{l-1} - \bmw_{l-1}^*) \right\|_F \notag \\
\le & L 2^{2L-1} {\sigma_{\max}^*}^{2L-2} \|\bmw - \bmw^*\|_F. \label{eq:get_bound1}
\end{align}
Using a similar telescope inequality, we have 
\begin{align}
\sqrt{\lambda}\|\bmw_{L:l+1}^T\bm Y\bmw_{l-1:1}-\bmw_{L:l+1}^{*T}\bm Y\bm W_{l-1:1}^{*T}\|_F\le \sqrt{\lambda}L y_1 2^{L-2}{\sigma}_{\max}^{*L-2}\|\bmw -\bmw^*\|_F.\label{eq:get_bound2}
\end{align}
Plugging \eqref{eq:get_bound1} and \eqref{eq:get_bound2} into \eqref{eq:nable_upbound}, we obtain
\begin{align*}
    \|\nabla G(\bmw)\|_F &\le \sum_{l=1}^L\|\nabla G_l(\bmw)\|_F \\
    &\le \left(\lambda L+ 2^{2L-1}L^2{\sigma}_{\max}^{*2L-2}+ \sqrt{\lambda} y_1 2^{L-2}L^2{\sigma}_{\max}^{*L-2}\right)\|\bmw -\bmw^*\|_F. 
\end{align*}
\end{proof}




\section{Linear Convergence under the Error Bound}\label{app:sec C}

\begin{prop}[ \cite{karimi2016linear,luo1993,schneider2015convergence,zhou2017unified}]\label{prop:threecondition} 
Suppose that \Cref{thm:eb} holds and the sequence $\{\bm{W}^k\}_{k \ge k_1}$ for some index $k_1 \ge 0$ satisfies the following conditions:
\begin{itemize}
    \item[(i)] (\textsl{Sufficient Decrease}) There exists $\kappa_1 > 0$ with
    $$
        F(\bmw^{k+1}) - F(\bmw^k) \leq -\kappa_1 \|\bmw^{k+1} - \bmw^k\|_F^2.
    $$
    \item[(ii)] (\textsl{Cost-to-Go Estimate}) There exists $\kappa_2 > 0$ such that
    $$
        F(\bmw^{k+1}) - F(\bm{W}^*)\leq \kappa_2\left( \mathrm{dist}^2(\bmw^k, \mathcal{W}) + \|\bmw^{k+1} - \bmw^k\|_F^2 \right).
    $$
    \item[(iii)] (\textsl{Safeguard}) There exists a constant $\kappa_3 > 0$ such that
    $
        \|\nabla F(\bmw^k)\|_F \leq \kappa_3 \|\bmw^{k+1} - \bmw^k\|_F.
    $
\end{itemize}
Then, $\{F(\bmw^k)\}_{k \ge k_1}$ converges Q-linearly to $F(\bm{W}^*)$  and $\{\bmw^k\}_{k \ge k_1}$ converges R-linearly to $\bmw^*$ for some $\bmw^* \in \mathcal{W}$.
\end{prop}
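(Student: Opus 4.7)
The plan is to combine the four conditions into a single geometric recursion on $F(\bm W^k)-F(\bm W^*)$, and then bootstrap from the Q-linear decay of the function values to R-linear convergence of the iterates. The key observation is that the error bound in \Cref{thm:eb} bridges the gradient norm to the distance $\mathrm{dist}(\bm W^k,\mathcal{W})$, while the safeguard bounds the gradient norm by the step length $\|\bm W^{k+1}-\bm W^k\|_F$; chaining these converts the cost-to-go estimate into an inequality involving only step lengths and function values.

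First, I would assume (as is standard in this line of work) that the starting index $k_1$ is chosen large enough so that $\mathrm{dist}(\bm W^k,\mathcal{W})\le\epsilon_1$ for all $k\ge k_1$, so that \Cref{thm:eb} applies along the tail of the sequence. Under this, the safeguard gives $\mathrm{dist}(\bm W^k,\mathcal{W})\le\kappa_1\kappa_3\|\bm W^{k+1}-\bm W^k\|_F$, and substituting into the cost-to-go estimate yields a constant $C:=\kappa_2(1+\kappa_1^2\kappa_3^2)$ such that
\begin{equation*}
F(\bm W^{k+1})-F(\bm W^*)\ \le\ C\,\|\bm W^{k+1}-\bm W^k\|_F^2.
\end{equation*}
Combining this with the sufficient decrease $\|\bm W^{k+1}-\bm W^k\|_F^2\le\kappa_1^{-1}(F(\bm W^k)-F(\bm W^{k+1}))$ and rearranging via the identity $F(\bm W^k)-F(\bm W^{k+1})=(F(\bm W^k)-F(\bm W^*))-(F(\bm W^{k+1})-F(\bm W^*))$ gives
\begin{equation*}
F(\bm W^{k+1})-F(\bm W^*)\ \le\ \rho\,(F(\bm W^k)-F(\bm W^*)),\qquad \rho:=\frac{C/\kappa_1}{1+C/\kappa_1}\in(0,1),
\end{equation*}
which is exactly the Q-linear convergence of $\{F(\bm W^k)\}$ to $F(\bm W^*)$.

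Second, to get R-linear convergence of the iterates, I would chain the sufficient decrease with the just-established Q-linear decay: $\|\bm W^{k+1}-\bm W^k\|_F\le\kappa_1^{-1/2}\sqrt{F(\bm W^k)-F(\bm W^{k+1})}\le\kappa_1^{-1/2}\sqrt{F(\bm W^k)-F(\bm W^*)}\le\kappa_1^{-1/2}\rho^{(k-k_1)/2}\sqrt{F(\bm W^{k_1})-F(\bm W^*)}$. Summing the geometric series shows $\{\bm W^k\}$ is Cauchy and converges R-linearly to some limit $\bm W^\infty$. Because $\|\nabla F(\bm W^k)\|_F\le\kappa_3\|\bm W^{k+1}-\bm W^k\|_F\to 0$, the limit satisfies $\nabla F(\bm W^\infty)=\bm 0$, hence $\bm W^\infty\in\mathcal{W}$.

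The main obstacle, and the step where care is needed, is justifying that the error bound can be invoked along the whole tail $k\ge k_1$. This is really an assumption hidden inside the phrasing of the proposition: one must either postulate a priori that $\mathrm{dist}(\bm W^k,\mathcal{W})\le\epsilon_1$ for all large $k$, or verify it inductively from sufficient decrease and boundedness of $\{\bm W^k\}$. The cleanest argument is to note that sufficient decrease implies $F(\bm W^k)$ is monotonically decreasing and bounded below, so $\|\bm W^{k+1}-\bm W^k\|_F\to 0$; if any limit point lies in $\mathcal{W}$ (which follows again from the safeguard $\|\nabla F(\bm W^k)\|_F\to 0$ and continuity of $\nabla F$), then for all sufficiently large $k$ the iterate $\bm W^k$ is within $\epsilon_1$ of $\mathcal{W}$, which is the required local regime. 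Once this is in hand, the rest of the argument is a direct merging of the four conditions as above.
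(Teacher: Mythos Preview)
Your argument is correct and follows exactly the standard Luo--Tseng/Zhou--So framework that the proposition cites. The paper itself does not supply a proof of this proposition; it is stated as a known result with references, so there is no original proof to compare against, and your chain ``error bound $\Rightarrow$ distance bounded by step length $\Rightarrow$ cost-to-go bounded by step length $\Rightarrow$ Q-linear decay of $F(\bm W^k)-F(\bm W^*)$ $\Rightarrow$ summable step lengths $\Rightarrow$ R-linear convergence of iterates'' is precisely the argument in those references.

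One purely notational point: the symbol $\kappa_1$ is overloaded in the paper --- it is the error-bound constant in \Cref{thm:eb} and also the sufficient-decrease constant in this proposition --- and your write-up uses it in both senses within the same displayed formula (e.g., $C=\kappa_2(1+\kappa_1^2\kappa_3^2)$ versus $\rho=(C/\kappa_1)/(1+C/\kappa_1)$). Renaming one of them (say $\kappa_{\mathrm{eb}}$ for the error-bound constant) would remove the ambiguity. Also, your final paragraph correctly identifies and resolves the only subtle point: the error bound is a \emph{local} condition, so one must first argue from sufficient decrease and the safeguard that $\|\nabla F(\bm W^k)\|_F\to 0$, hence accumulation points lie in $\mathcal{W}$, hence the tail of the sequence lies in the $\epsilon_1$-neighborhood where \Cref{thm:eb} applies. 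This also justifies $F(\bm W^k)\ge F(\bm W^*)$ along the tail (since $F(\bm W^k)$ decreases to the critical value on the relevant component of $\mathcal{W}$), which your Q-linear recursion implicitly uses.
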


{Most gradient-based algorithms generate sequences that satisfy the above supposition.} 

\section{Auxiliary Results}\label{app:sec auxi}

To bound the spectral gap between eigenvectors associated with repeated eigenvalues between two symmetric matrices, we introduce the Davis-Kahan theorem \cite{yu2015useful}. 

\begin{lemma}\label{lem:daviskahan}
 Let $\bm A, \hat{\bm A} \in \mathbb{R}^{p \times p}$ be symmetric with eigenvalues $\lambda_1 \geq \cdots \geq \lambda_p$ and $\hat{\lambda}_1 \geq \cdots \geq \hat{\lambda}_p$ respectively. Fix $1 \leq r \leq s \leq p$ and assume that $\min\{\lambda_{r-1} - \lambda_r, \lambda_s - \lambda_{s+1}\} > 0$, where $\lambda_0 := \infty$ and $\lambda_{p+1} := -\infty$. Let $d := s - r + 1$ and $\bm V = (\bm v_r, \bm v_{r+1}, \dots, \bm v_s) \in \mathbb{R}^{p \times d}$ and $\hat{\bm V} = (\hat{\bm v}_r, \hat{\bm v}_{r+1}, \dots, \hat{\bm v}_s) \in \mathbb{R}^{p \times d}$ have orthonormal columns satisfying $\bm A \bm v_j = \lambda_j \bm v_j$ and $\hat{\bm A} \hat{\bm v}_j = \hat{\lambda}_j \hat{\bm v}_j$ for all $j = r, r+1, \ldots, s$. Then, there exists an orthogonal matrix $\bm O \in \mathcal{O}^{d}$ such that
\[
\|\hat{\bm V} \bm O - \bm V\|_F \leq \frac{4 \|\hat{\bm A} - \bm A\|_F}{\min\{\lambda_{r-1} - \lambda_r, \lambda_s - \lambda_{s+1}\}}.
\]
\end{lemma}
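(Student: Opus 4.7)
The plan is to prove the Yu--Wang--Samworth variant of the Davis--Kahan $\sin\Theta$ theorem in three steps: (i) reduce the Procrustes-style bound $\|\hat{\bm V}\bm O - \bm V\|_F$ to the Frobenius norm of the sines of principal angles, (ii) establish the classical operator Davis--Kahan bound on $\|\sin\Theta\|_F$ via a Sylvester equation, and (iii) use Weyl's inequality to convert the gap condition, which is phrased purely in terms of the spectrum of $\bm A$, into the usual gap condition that separates the two relevant spectra.

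First I would define the principal angles. Let $\bm M := \hat{\bm V}^T\bm V \in \R^{d\times d}$ and take an SVD $\bm M = \bm U_1 \bm C \bm U_2^T$ with $\bm C = \mathrm{diag}(\cos\theta_1,\dots,\cos\theta_d)$, where the singular values are arranged so that $\theta_j \in [0,\pi/2]$. Choose the Procrustes optimizer $\bm O = \bm U_1 \bm U_2^T \in \mathcal{O}^d$. A direct computation using $\bm V^T\bm V = \bm I_d = \hat{\bm V}^T\hat{\bm V}$ gives
\begin{equation*}
\|\hat{\bm V}\bm O - \bm V\|_F^2 = 2d - 2\,\mathrm{tr}(\bm C) = 2\sum_{j=1}^d (1-\cos\theta_j) \le 2\sum_{j=1}^d \sin^2\theta_j = 2\|\sin\Theta\|_F^2,
\end{equation*}
where the inequality uses $1-\cos\theta \le 1-\cos^2\theta$ on $[0,\pi/2]$. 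Thus $\|\hat{\bm V}\bm O - \bm V\|_F \le \sqrt{2}\,\|\sin\Theta\|_F$, so it suffices to bound $\|\sin\Theta\|_F$ by $2\|\hat{\bm A}-\bm A\|_F/\delta$ with $\delta := \min\{\lambda_{r-1}-\lambda_r,\lambda_s-\lambda_{s+1}\}$.

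Second I would bound $\|\sin\Theta\|_F$ through the standard Sylvester-equation argument. Let $\hat{\bm V}_\perp$ complete $\hat{\bm V}$ to an orthonormal basis of $\R^p$, write $\bm A\bm V = \bm V\bm \Lambda$ with $\bm \Lambda = \mathrm{diag}(\lambda_r,\dots,\lambda_s)$, and $\hat{\bm A}\hat{\bm V}_\perp = \hat{\bm V}_\perp \hat{\bm \Lambda}_\perp$ where $\hat{\bm \Lambda}_\perp$ is diagonal with entries $\{\hat\lambda_j : j\notin\{r,\dots,s\}\}$. Left-multiplying $(\hat{\bm A}-\bm A)\bm V = \hat{\bm A}\bm V - \bm V\bm \Lambda$ by $\hat{\bm V}_\perp^T$ yields
\begin{equation*}
\hat{\bm \Lambda}_\perp\,\hat{\bm V}_\perp^T\bm V - \hat{\bm V}_\perp^T\bm V\,\bm \Lambda = \hat{\bm V}_\perp^T(\hat{\bm A}-\bm A)\bm V.
\end{equation*}
If the spectra of $\hat{\bm \Lambda}_\perp$ and $\bm \Lambda$ are separated by some $\delta' > 0$, then entrywise division by eigenvalue differences (in the eigenbasis of $\hat{\bm \Lambda}_\perp$ and $\bm \Lambda$) gives $\|\hat{\bm V}_\perp^T\bm V\|_F \le \|\hat{\bm V}_\perp^T(\hat{\bm A}-\bm A)\bm V\|_F/\delta' \le \|\hat{\bm A}-\bm A\|_F/\delta'$, and $\|\sin\Theta\|_F = \|\hat{\bm V}_\perp^T\bm V\|_F$.

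Third I would import Weyl's inequality to produce the $\delta'$ we need. When $\|\hat{\bm A}-\bm A\| \le \delta/2$, Weyl yields $|\hat\lambda_j - \lambda_j|\le \delta/2$ for every $j$, so every eigenvalue in $\hat{\bm \Lambda}_\perp$ is at distance at least $\delta - \delta/2 = \delta/2$ from $\{\lambda_r,\dots,\lambda_s\}$; applying the Sylvester bound with $\delta' = \delta/2$ gives $\|\sin\Theta\|_F \le 2\|\hat{\bm A}-\bm A\|_F/\delta$. When $\|\hat{\bm A}-\bm A\| > \delta/2$, the trivial bound $\|\sin\Theta\|_F \le \sqrt{d} \le \sqrt{d}\cdot 2\|\hat{\bm A}-\bm A\|/\delta \le 2\|\hat{\bm A}-\bm A\|_F/\delta$ (using $\|\cdot\|_F \ge \sqrt{d}\,\|\cdot\|/\sqrt{p}$-type reasoning, or more simply $\|\sin\Theta\|_F^2 \le d \le 4\|\hat{\bm A}-\bm A\|^2/\delta^2\cdot d \le 4\|\hat{\bm A}-\bm A\|_F^2/\delta^2$) still holds. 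Combining with the reduction of the first step:
\begin{equation*}
\|\hat{\bm V}\bm O - \bm V\|_F \le \sqrt{2}\,\|\sin\Theta\|_F \le \frac{2\sqrt{2}\,\|\hat{\bm A}-\bm A\|_F}{\delta} \le \frac{4\,\|\hat{\bm A}-\bm A\|_F}{\delta}.
\end{equation*}

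The main obstacle is step two: the Sylvester-equation bound usually requires that the spectra of $\hat{\bm \Lambda}_\perp$ and $\bm \Lambda$ be \emph{a priori} disjoint, which the hypothesis does not directly provide (the gap assumption involves only $\bm A$'s eigenvalues). The Yu--Wang--Samworth trick in step three---paying a factor of two to invoke Weyl and push the gap over to $\hat{\bm A}$---is precisely what closes this gap, and handling the remaining ``large perturbation'' regime by a trivial bound is what pushes the final constant from $\sqrt{2}$ to $4$.
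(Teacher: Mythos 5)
This lemma is not proved in the paper at all: it is quoted as an auxiliary result imported from Yu, Wang and Samworth \cite{yu2015useful}, so the only benchmark is their argument, whose skeleton (Procrustes reduction to $\|\sin\Theta\|_F$, Sylvester-equation bound, Weyl's inequality) you reproduce. Your first two steps and the small-perturbation case are sound: with $\bm O = \bm U_1\bm U_2^T$ one indeed gets $\|\hat{\bm V}\bm O - \bm V\|_F^2 = 2\sum_j (1-\cos\theta_j) \le 2\|\sin\Theta\|_F^2$; the complement $\hat{\bm V}_\perp$ can be chosen to consist of eigenvectors of $\hat{\bm A}$; the entrywise division in the Sylvester equation is valid once the spectra of $\hat{\bm\Lambda}_\perp$ and $\bm\Lambda$ are separated; and when $\|\hat{\bm A}-\bm A\| \le \delta/2$, Weyl's inequality does deliver the separation $\delta/2$.

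The genuine gap is the large-perturbation regime $\|\hat{\bm A}-\bm A\| > \delta/2$. Your chain $d \le 4\|\hat{\bm A}-\bm A\|^2 d/\delta^2 \le 4\|\hat{\bm A}-\bm A\|_F^2/\delta^2$ needs $\sqrt{d}\,\|\hat{\bm A}-\bm A\| \le \|\hat{\bm A}-\bm A\|_F$, which is false in general: a rank-one perturbation has $\|\hat{\bm A}-\bm A\|_F = \|\hat{\bm A}-\bm A\|$, so the inequality fails whenever $d \ge 2$; and the parenthetical ``$\|\cdot\|_F \ge \sqrt{d}\,\|\cdot\|/\sqrt{p}$-type reasoning'' only gives $\sqrt{d}\,\|\cdot\| \le \sqrt{p}\,\|\cdot\|_F$, which introduces an extra dimension factor. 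Thus in the regime $\|\hat{\bm A}-\bm A\| > \delta/2$ but $\|\hat{\bm A}-\bm A\|_F < \sqrt{d}\,\delta/2$ your proof establishes nothing; the trivial bound $\|\sin\Theta\|_F \le \sqrt{d}$ only rescues the operator-norm variant $d^{1/2}\|\hat{\bm A}-\bm A\|$, which is exactly why the Yu--Wang--Samworth theorem has a minimum of the two quantities in its numerator rather than the Frobenius norm alone. A repair that stays inside your framework is to split the rows of $\bm X := \hat{\bm V}_\perp^T\bm V$ in the Sylvester equation according to whether the corresponding eigenvalue $\hat{\mu}_j$ of $\hat{\bm A}$ lies within $\delta/2$ of the interval $[\lambda_s,\lambda_r]$: the ``good'' rows are bounded by $(2/\delta)$ times the corresponding rows of $\hat{\bm V}_\perp^T(\hat{\bm A}-\bm A)\bm V$, while each ``bad'' row corresponds to an eigenvalue of $\hat{\bm A}$ that has moved by more than $\delta/2$, so by the Hoffman--Wielandt inequality there are at most $4\|\hat{\bm A}-\bm A\|_F^2/\delta^2$ of them, each contributing at most $1$ to $\|\bm X\|_F^2$. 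This yields $\|\sin\Theta\|_F \le 2\sqrt{2}\,\|\hat{\bm A}-\bm A\|_F/\delta$ with no regime split, and combined with your $\sqrt{2}$ Procrustes factor it produces exactly the stated constant $4$.
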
 
\begin{lemma}[Mirsky's Inequality \cite{stewart1990matrix}]\label{lem:mirsky}
Let $\bm{X}, \tilde{\bm{X}} \in \mathbb{R}^{m \times n}$ be arbitrary matrices with singular values
\[
\sigma_1 \geq \sigma_2 \geq \cdots \geq \sigma_{l} \quad \text{and} \quad \tilde{\sigma}_1 \geq \tilde{\sigma}_2 \geq \cdots \geq \tilde{\sigma}_{l},
\]
respectively, where $l = \min\{m, n\}$. Then, for any unitarily invariant norm (e.g., the Frobenius norm), we have
\[
\left\|\mathrm{diag}(\tilde{\sigma}_1 - \sigma_1, \dots, \tilde{\sigma}_l - \sigma_l)\right\| \leq \|\tilde{\bm X} - \bm X\|.
\]
\end{lemma}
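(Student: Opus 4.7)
The plan is to reduce the singular value inequality to an eigenvalue perturbation statement for Hermitian matrices via the Jordan--Wielandt dilation, and then to handle the Hermitian version through majorization combined with the Ky Fan / von Neumann characterization of unitarily invariant norms.

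First I would introduce the Hermitian dilation
\[
H(\bm A) \;=\; \begin{bmatrix} \bm 0 & \bm A \\ \bm A^{T} & \bm 0 \end{bmatrix} \in \R^{(m+n)\times(m+n)}.
\]
A direct spectral computation shows that the eigenvalues of $H(\bm A)$ are exactly $\pm \sigma_{i}(\bm A)$ for $i \in [l]$, together with $|m-n|$ additional zeros when $m \neq n$. Because $H$ is linear, $H(\tilde{\bm X}) - H(\bm X) = H(\tilde{\bm X} - \bm X)$, and any unitarily invariant norm of a matrix depends only on its singular values, so the norms of $H(\cdot)$ agree with those of the original matrix up to a universal multiplicative constant that cancels from both sides of the target inequality (for the Frobenius norm one has $\|H(\bm A)\|_{F}^{2} = 2\|\bm A\|_{F}^{2}$).

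Second, I would prove the Hermitian analogue: for Hermitian $\bm A, \bm B$ with eigenvalues $\alpha_{1} \geq \dots \geq \alpha_{N}$ and $\beta_{1} \geq \dots \geq \beta_{N}$, every unitarily invariant norm satisfies $\|\mathrm{diag}(\alpha_{i} - \beta_{i})\| \leq \|\bm A - \bm B\|$. The core ingredient is Lidskii's majorization inequality, stating that the vector of ordered eigenvalue differences is majorized by the vector of eigenvalues of $\bm A - \bm B$. Lidskii's inequality itself can be obtained from Ky Fan's variational principle for partial sums of eigenvalues (or from a more algebraic Wielandt-type trace identity). Once majorization is in hand, the norm bound follows from the Hardy--Littlewood--P\'olya theorem, using the fact that every unitarily invariant norm is a symmetric gauge function of the singular values. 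Applying this Hermitian inequality to $H(\bm X)$ and $H(\tilde{\bm X})$ and collecting the paired $\pm$ eigenvalues yields the claim.

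The main obstacle will be the Lidskii majorization step: while the final statement is classical, giving a clean self-contained proof requires either the flag-manifold trace argument or a careful variational reduction. For the Frobenius norm alone one can bypass this by using the Hoffman--Wielandt inequality for Hermitian matrices, whose proof reduces to a doubly stochastic matrix identity via Birkhoff's theorem and is considerably shorter; since the paper only invokes \Cref{lem:mirsky} in the Frobenius norm, this shorter route would suffice in practice.
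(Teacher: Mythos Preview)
The paper does not give its own proof of this lemma: it is stated as an auxiliary result in the appendix with a citation to Stewart--Sun and is used as a black box throughout. So there is no paper proof to compare against, and your proposal is not redundant with anything in the manuscript.

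Your outline is the standard and correct route to Mirsky's inequality: reduce to the Hermitian eigenvalue perturbation theorem via the Jordan--Wielandt dilation, then invoke Lidskii majorization together with the Ky Fan characterization of unitarily invariant norms. One point deserves tightening. The sentence ``the norms of $H(\cdot)$ agree with those of the original matrix up to a universal multiplicative constant that cancels from both sides'' is not literally true for an arbitrary unitarily invariant norm (the doubling of singular values under $H$ interacts differently with, say, the spectral norm, the Frobenius norm, and the Ky Fan $k$-norms). The clean fix is to argue only through the Ky Fan norms $\|\cdot\|_{(k)}$: the singular values of $H(\bm A)$ are each $\sigma_i(\bm A)$ with multiplicity two, so $\|H(\bm A)\|_{(2k)} = 2\|\bm A\|_{(k)}$, and likewise for the diagonal of eigenvalue differences. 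This gives the inequality for every Ky Fan norm, and since weak majorization (equivalently, domination in all Ky Fan norms) implies domination in every symmetric gauge function, the general unitarily invariant case follows. With that adjustment your argument is complete. Your closing remark is also apt: since the paper only ever uses the lemma in the Frobenius norm, the Hoffman--Wielandt route is entirely sufficient for the applications here.
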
 

\end{appendix}






  



\end{document}